\documentclass[11pt, reqno]{amsart}
\usepackage{amssymb}
\usepackage{amsmath}
\usepackage{enumitem}
\usepackage{hyperref}
\usepackage[normalem]{ulem}
\usepackage{color}
\hoffset=-0.5truecm
\voffset=-0.8truecm
\textwidth=13.7truecm
\textheight=22.5truecm

\newtheorem{Theorem}{Theorem}[section]
\newtheorem{Lemma}[Theorem]{Lemma}
\newtheorem{Proposition}[Theorem]{Proposition}
\newtheorem{Corollary}[Theorem]{Corollary}
\newtheorem{Remark}[Theorem]{Remark}

\newcommand{\R}{\mathbb R}
\newcommand{\eps}{\varepsilon}
\newcommand{\dsigma}{d\sigma}

\newcommand{\nero }{\color{black}}

\begin{document}
 \title[Weighted Hardy-Rellich inequalities]{Weighted Hardy-Rellich inequalities\\ via the Emden-Fowler transform}
\author{Elvise Berchio}
\address{\hbox{\parbox{5.7in}{\medskip\noindent{Dipartimento di Scienze Matematiche, \\
Politecnico di Torino,\\
        Corso Duca degli Abruzzi 24, 10129 Torino, Italy. \\[3pt]
        \em{E-mail address: }{\tt elvise.berchio@polito.it}}}}}

\author{Paolo Caldiroli}
\address{\hbox{\parbox{5.7in}{\medskip\noindent{Dipartimento di Matematica ``Giuseppe Peano'', \\
Universit\`a degli Studi di Torino,\\
   Via Carlo Alberto 10, 10123 Torino, Italy. \\[3pt]
        \em{E-mail address: }{\tt paolo.caldiroli@unito.it}}}}}
\keywords{ Weighted Hardy-Rellich inequality, Emden-Fowler transform, optimal inequalities}

\subjclass[2010]{Primary: 26D10. Secondary: 35A23, 35J30, 47A63.}

\begin{abstract}
We exploit a technique based on the Emden-Fowler transform to prove optimal Hardy-Rellich inequalities on cones, including the punctured space $\R^N\setminus\{0\}$ and the half space as particular cases. We find optimal constants for classes of test functions vanishing on the boundary of the cone and possibly orthogonal to prescribed eigenspaces of the Laplace Beltrami operator restricted to the spherical projection of the cone. Furthermore, we show that extremals do not exist in the natural function spaces. Depending on the parameters, certain resonance phenomena can occur. For proper cones, this is excluded when considering test functions with compact support. 
Finally, for suitable subsets of the cones we provide improved Hardy-Rellich inequalities, under different boundary conditions, with optimal remainder terms.
\end{abstract}

\maketitle

\normalsize
\section{Introduction}
The classical Rellich inequality \cite{Rellich} states that
\[
\int_{\R^N}|\Delta u|^2\,dx\ge \left[\frac{N(N-4)}4\right]^2\int_{\R^N}|x|^{-4}|u|^2\,dx\quad\forall u\in C^2_c(\R^N\setminus\{0\}),~~N\ne 2\,.
\]
This inequality has inspired extensive research, leading to numerous generalizations and refinements. Among them, the following inequality has recently gained particular interest: 
\begin{equation}\label{HardyRellich}
\int_{\R^N}|x|^\alpha|\Delta u|^2\,dx\ge C_{N,\alpha}\int_{\R^N}|x|^{\alpha-2}|\nabla u|^2\,dx\quad\forall u\in C^2_c(\R^N\setminus\{0\})\,,
\end{equation}
for some $C_{N,\alpha} \geq 0$. We refer to this as power weighted Hardy-Rellich inequality, because, in some sense, it lies between the power weighted version of the Hardy inequality
\begin{equation}
\label{Hardy}
\int_{\R^N}|x|^\alpha|\nabla u|^2\,dx\ge \left(\frac{N-2+\alpha}2\right)^2\int_{\R^N}|x|^{\alpha-2}|u|^2\,dx\quad\forall u\in C^1_c(\R^N\setminus\{0\})\,,
\end{equation}
and the corresponding version of the Rellich inequality
\begin{equation}
\label{Rellich}
\int_{\R^N}|x|^\alpha|\Delta u|^2\,dx\ge c_{N,\alpha}\int_{\R^N}|x|^{\alpha-4}|u|^2\,dx\quad\forall u\in C^2_c(\R^N\setminus\{0\})\,,
\end{equation}
whose optimal constant is
\[
c_{N,\alpha}=\min_{j=0,1,2,...}\left(\gamma_\alpha+\lambda_j\right)^2,
\]
where
\begin{equation}\label{gamma-alpha}
\gamma_\alpha=\left(\frac{N-2}2\right)^2-\left(\frac{\alpha-2}2\right)^2\quad\text{and}\quad\lambda_j=j(j+N-2)\quad\forall j=0,1,2,...
\end{equation}
(see \cite{CM} and \cite{GM} and references therein). 
Inequality \eqref{HardyRellich} has a more recent history, compared to \eqref{Hardy}--\eqref{Rellich}. It first appeared in the work \cite{TZ} by Tertikas and Zographopoulos, who established it for $\alpha=0$ in higher dimensions $N\ge 5$, with best constant $C_{N,0}=\frac{N^2}4$ for all $N\ge 5$. Subsequently, Beckner \cite{Beckner} and Ghoussoub and Moradifam \cite{GM}, independently and with different techniques, extended it to all dimensions $N\ge 3$. For low dimensions the optimal constants are $C_{4,0}=3$ and $C_{3,0}=\frac{25}{36}$. We point out that inequality \eqref{HardyRellich} for $\alpha=0$ becomes trivial in dimension $N=2$, i.e. $C_{2,0}=0$, whereas in dimension $N=1$ the best constant is $C_{1,0}=\frac14$. A unified approach for deriving the optimal constant in \eqref{HardyRellich} with $\alpha=0$, valid for all dimensions, was recently provided in \cite{Cazacu}.

As far as concern the power weighted case, the optimal constant in  \eqref{HardyRellich} turns out to be
\begin{equation}\label{best-constant}
C_{N,\alpha}=\min\left\{\left(\frac{N-\alpha}2 \right)^2\right\}\cup\left\{\frac{\left(\gamma_\alpha+\lambda_j\right)^2}{\left(\frac{N-4+\alpha}2\right)^2+\lambda_j}~:~j=1,2,3,...\right\}
\end{equation}
where $\gamma_\alpha$ and $\lambda_j$ are as in \eqref{gamma-alpha}. 
%, with the agreement that $\gamma_\alpha^2\left(\frac{N-4+\alpha}2\right)^{-2}=\left(\frac{N-\alpha}2\right)^2$ when $j=0$. 
This result was initially obtained under parameter restrictions in \cite{GM} (see also \cite{CM2}) and later generalized in  \cite{CCF, GPPS}. Further extensions to vector fields were studied in  \cite{HT-MathAnn, HT-JFA, HT-2021}.

Various approaches and tools have been employed to study \eqref{Rellich}, such as Bessel pairs and spherical harmonics decomposition (\cite{GM}), Fourier transform tools (\cite{Beckner}), factorization of differential operators (\cite{GPPS}), etc. In this paper, we adopt a method previously used in \cite{CM} and \cite{CM2} based on the Emden-Fowler transform, to deal with \eqref{Rellich} on cones, i.e., domains of the form 
\begin{equation*}
%\label{CSigma}
C_\Sigma:=\left\{x\in\R^N\setminus\{0\}~:~\frac x{|x|}\in\Sigma\right\},
\end{equation*}
where $\Sigma$ is a $C^2$ domain in the sphere $S^{N-1}$ ($N\geq 2$). We remark that when $\Sigma=S^{N-1}$, then $C_\Sigma$ is the punctured space $\R^N\setminus\{0\}$ while when $\Sigma$ is a half-sphere, the associated cone becomes a half-space, which is a smooth domain with a boundary that includes the singularity.

The Emden-Fowler transform converts problems on the cone $C_\Sigma$ into problems on the cylinder $\R\times\Sigma$, it changes power weighted $L^2$-norms of a mapping and of its derivatives into corresponding weight-free $L^2$-norms with constant coefficients, and it decouples the problem into a 1-dimensional problem and eigenvalues problems for the Laplace-Beltrami operator with null boundary conditions on $\Sigma$.
This is accomplished in Section \ref{S2}, where we determine the optimal constant in the Hardy-Rellich inequality for test functions $u\in C^2(\overline{C_\Sigma})$ such that $u|_{\partial C_\Sigma}=0$ and vanishing in neighborhoods of $0$ and $\infty$ (see Theorem \ref{HR-cone}). Our approach provides optimal constants also when restricting to subclasses of test functions that are orthogonal to prescribed eigenspaces of the Laplace-Beltrami operator, and we show that extremals do not exist in the natural function space (Theorem \ref{not_att}).  

We point out that, while the Rellich inequality \eqref{Rellich} is invariant under the Kelvin transform
\[
u\mapsto\widehat{u}\quad\text{where}\quad\widehat{u}(x):=|x|^{2-N}u\left(\frac{x}{|x|}\right)
\] 
(except than $\alpha$ becomes $4-\alpha$), the Hardy-Rellich inequality \eqref{HardyRellich} loses this invariance for $\alpha\ne 2$. Instead, an optimal Schmincke-type inequality emerges (Theorem \ref{T:Schmincke-type}). 

For any cone $C_\Sigma$, as in the case of the whole punctured space, the optimal Hardy-Rellich constant may vanish for certain values of the parameters. This occurs due to some resonance phenomena, when $-\gamma_\alpha$ is an eigenvalue of the  Laplace-Beltrami operator in $H^1_0(\Sigma)$; see, for instance, \eqref{best-constant} when $\Sigma=S^{N-1}$. In Section \ref{S3} we prove that for \emph{proper} cones, if we restrict to test functions $u\in C_c^2({C_\Sigma})$, such resonance phenomena do not occur, and the optimal constant is always positive. 

Finally, in Section \ref{S5}, we exploit the effectiveness of the Emden-Fowler trasform method to deal with a class of cone-like domains of the form 
\begin{equation}
\label{general-domains}
\Omega=\{x\in C_\Sigma~:~|x|<1\}\quad\text{or}\quad\Omega=\{x\in C_\Sigma~:~|x|>1\}\,.
\end{equation}
In this case, according to the boundary conditions, we show some new improved Hardy Rellich inequalities with remainder terms involving Leray-type weights, which also contain a logarithmic part (for integral inequalities with first order differential operators see the very first work \cite{L} and, more recently, \cite{Mu} and the references therein).  Furthermore, optimal improvements are also provided when imposing the above mentioned orthogonality conditions.	

To the best of our knowledge, prior results in this direction were obtained in the so-called Dirichlet case, in \cite{AGS},  \cite{BT} and \cite{TZ}, for test functions supported in bounded domains containing the origin, under suitable restrictions on the parameters, and in the recent work \cite{GPS} for test functions supported in the punctured ball, see Remark \ref{Dirichlet-rem_terms}. Still in the ball, optimal improved Hardy-Rellich inequalities in the Navier case were instead obtained in  \cite{M}. In the present paper, we also consider mixed Navier-Dirichlet boundary conditions, \nero for domains like \eqref{general-domains}, and we address the optimality of the coefficients in front of the remainder terms as well. 

An example of the type of improved inequalities we obtained is the following  statement, which is a special case of Theorems \ref{ND-case} and \ref{N-case}, for $\alpha=0$ and $\Sigma$ being a half-sphere, see also Remark \ref{R:principal}.

\begin{Proposition}
Let $B_+=\{x\in\R^N~:~x_1>0\,,|x|<1\}$. Then:
\begin{itemize}[leftmargin=15pt]
\item[1.] (\emph{Navier-Dirichlet case}) For every $u\in C^2(\overline{B_+})$ with $u|_{\partial B_+}=0$, $u=0$ in neighborhoods of $0$ and of $\overline{B_+}\cap\{|x|=1\}$ one has
\begin{equation}\label{intro1}
\begin{split}
\int_{B^+}&|\Delta u|^2\,dx\ge\frac{(N^2-4)^2}{4[(N-2)^2+8]}\int_{B^+}|x|^{-2}|\nabla u|^2\,dx\\
&+\frac14\int_{B^+}|x|^{-2}\left|\log|x|\right|^{-2}|\nabla u|^2\,dx+\frac{4(N-1)}{(N-2)^2+8}\int_{B^+}|x|^{-4}\left|\log|x|\right|^{-2}|u|^2\,dx\,.
\end{split}
\end{equation}
\item[2.] (\emph{Navier case}) For every $u\in C^2(\overline{B_+})$ with $u|_{\partial B_+}=0$ and $u=0$ in a neighborhood of $0$, one has
\begin{equation}\label{intro2}
\begin{split}
\int_{B^+}|\Delta u|^2\,dx&\ge\frac{(N^2-4)^2}{4[(N-2)^2+8]} \int_{B^+}|x|^{-2}|\nabla u|^2\,dx\\
&\quad+\frac{(N-2)^4+16(N^2+4)}{16[(N-2)^2+8]}\int_{B^+}|x|^{-4}\left|\log|x|\right|^{-2}|u|^2\,dx\,.
\end{split}
\end{equation}
\end{itemize}
Moreover, both in  \eqref{intro1} and \eqref{intro2}, all coefficients are optimal. 
\end{Proposition}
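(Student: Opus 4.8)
Strictly, the Proposition is the case $\alpha=0$, $\Sigma=S^{N-1}_+$ of Theorems \ref{ND-case} and \ref{N-case}, so once those are proved it follows by substitution; here I sketch the self-contained argument, which exhibits the mechanism behind all the estimates. The plan is to run the Emden-Fowler machinery of Section \ref{S2} tuned to $\alpha=0$, setting $u(x)=|x|^{-(N-4)/2}v(s,\omega)$ with $s=-\log|x|\in(0,\infty)$ and $\omega=x/|x|\in S^{N-1}_+$. Under this change of variables $B_+$ becomes the half-cylinder $Q=(0,\infty)\times S^{N-1}_+$, the origin going to $s=+\infty$ and the spherical cap $\{|x|=1\}$ to $s=0$, and the three weighted integrals become weight-free integrals on $Q$: $\int_{B_+}|\Delta u|^2\,dx$ turns into the $L^2(Q)$-norm of a fixed constant-coefficient second order operator $\mathcal Lv$, $\int_{B_+}|x|^{-2}|\nabla u|^2\,dx$ into a fixed quadratic form in $(\partial_s v,\nabla_\omega v,v)$, and $\int_{B_+}|x|^{-4}|u|^2\,dx$ into $\|v\|_{L^2(Q)}^2$, while the logarithmic weight $|\log|x||^{-2}$ becomes the power weight $s^{-2}$. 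The Dirichlet datum on the flat part of $\partial B_+$ becomes the Dirichlet condition on $\partial S^{N-1}_+$, the vanishing near $0$ becomes $v=0$ for $s$ large, and the two cases differ only at $s=0$: the Navier-Dirichlet hypothesis forces $v$ to vanish near $s=0$ as well, while the Navier hypothesis imposes only $v(0,\cdot)=0$.

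Next I would expand $v(s,\omega)=\sum_k\phi_k(s)e_k(\omega)$ in the orthonormal basis of Dirichlet eigenfunctions of $-\Delta_\omega$ on $S^{N-1}_+$, with eigenvalues $\mu_k$; the principal one is $\mu_1=N-1$, with $e_1\propto\omega_1$. Orthogonality decouples all three quadratic forms mode by mode, reducing everything to a family of one-dimensional inequalities on $(0,\infty)$ in which $\mu_k$ is a parameter. Analysing the constant-coefficient one-dimensional form via its Fourier symbol in $s$ shows the leading constant for the $k$-th mode is the ratio $g(\mu_k)$, where $g(\mu):=\frac{(\gamma_0+\mu)^2}{((N-4)/2)^2+\mu}$; a one-variable check ($g$ is increasing for $\mu\ge N-1$, while its radial value $g(0)=(N/2)^2$, the isolated term in \eqref{best-constant}, is unavailable because $\mu=0$ is not a Dirichlet eigenvalue on $S^{N-1}_+$) identifies the minimiser as $\mu_1=N-1$, for which $g(\mu_1)=\frac{(N^2-4)^2}{4[(N-2)^2+8]}$, exactly the leading constant of \eqref{intro1} and \eqref{intro2}.

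After subtracting this leading multiple of the $\int|x|^{-2}|\nabla u|^2$ form, the residual one-dimensional form is nonnegative, and the remainders are produced by the sharp weighted Hardy inequality on the half-line, $\int_0^\infty|\psi'|^2\,ds\ge\frac14\int_0^\infty s^{-2}|\psi|^2\,ds$. Applied to the radial derivative $\psi=\phi'$ it yields the gradient-log remainder $\frac14\int_{B_+}|x|^{-2}|\log|x||^{-2}|\nabla u|^2\,dx$, the constant $\frac14$ being exactly the sharp $1$-D Hardy constant; applied to $\phi$ itself, together with the cross terms, it yields the function-log remainder. The decisive point is the behaviour at $s=0$: under the Navier-Dirichlet hypothesis $\phi$ and $\phi'$ vanish near $s=0$, so the Hardy inequality for $\phi'$ applies with no boundary term and both remainders of \eqref{intro1} survive; under the bare Navier hypothesis only $\phi(0)=0$ holds, and the integration by parts at $s=0$ leaves a boundary contribution involving $\phi'(0)$ (the free radial derivative of $u$ at the cap), which cancels the gradient-log term and reabsorbs into the single, larger function-log remainder with coefficient $\frac{(N-2)^4+16(N^2+4)}{16[(N-2)^2+8]}$ of \eqref{intro2}.

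Finally, optimality of every coefficient follows by testing the reduced $1$-D inequalities on the principal mode $e_1$ with a one-parameter family of radial profiles approximating the virtual minimisers of the relevant $1$-D Hardy inequalities (truncated profiles behaving like $s^{1/2}$ near the active endpoint, regularised by logarithmic cut-offs), which drive the leading ratio and each remainder ratio simultaneously to their claimed values. The main obstacle I anticipate is the bookkeeping of the reduction: computing $\mathcal L$ and all its cross terms precisely enough that, after removing the leading constant, the residual form is manifestly a nonnegative combination of sharp Hardy terms, and — above all in the Navier case — correctly tracking the $s=0$ boundary term responsible for the different remainder structure of \eqref{intro2}. Establishing sharpness of the remainder constants (not merely their validity) is the delicate part, resting on the sharpness of the $1$-D Hardy inequality and on the fact that no higher mode $\mu_k>N-1$ can improve them.
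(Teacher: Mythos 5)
Your overall route is the paper's: Emden--Fowler transform to the half-cylinder $Z=\R_+\times S^{N-1}_+$, decomposition in Dirichlet eigenfunctions of the half-sphere, identification of the leading constant through the minimum of $\lambda\mapsto(\gamma_0+\lambda)^2/(\widehat\gamma_0^2+\lambda)$ at $\lambda_1=N-1$, and one-dimensional Hardy inequalities for the remainders. Two points, however, are genuine gaps rather than bookkeeping. First, your mechanism for the Navier case is not the one that works. There is no integration by parts at $s=0$ producing a boundary term in $\phi'(0)$ that ``cancels the gradient-log term and reabsorbs'' into the $|u|^2$ remainder: the gradient-log term is simply never produced (the one-dimensional Hardy inequality applied to $\phi'$ requires $\phi'$ to vanish near $s=0$, and when $\phi'(0)\neq0$ the weighted integral $\int_0^\infty s^{-2}|\phi'|^2$ is infinite, so no boundary-corrected version with constant $\tfrac14$ is available). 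What the paper does instead is reallocate the positive terms: it discards $\int_Z|w_{tt}|^2\ge 0$ entirely, keeps the full coefficient $2\overline\gamma_0+2\lambda_1-\mu_0$ in front of $\int_Z|w_t|^2$ (using only the spherical Poincar\'e inequality $2\int|\nabla_\sigma w_t|^2\ge 2\lambda_1\int|w_t|^2$ and the leading inequality on the half-cylinder, obtained by odd extension in $t$), and then applies the one-dimensional Hardy inequality to $w$ itself, which does vanish at $s=0$. This yields exactly $\kappa_0=(2\overline\gamma_0+2\lambda_1-\mu_0)/4=\frac{(N-2)^4+16(N^2+4)}{16[(N-2)^2+8]}$; your cancellation narrative gives no computation leading to this value.

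Second, the optimality argument cannot work as you describe it. A single family of profiles cannot ``drive the leading ratio and each remainder ratio simultaneously to their claimed values'': the three coefficients are extremized in different scaling regimes. The paper tests with $w^\varepsilon(t,\omega)=w(\varepsilon t,\omega)$ and obtains $A_0\le\mu_0$ by multiplying the resulting polynomial inequality in $\varepsilon$ by $\varepsilon$ and letting $\varepsilon\to0$, obtains $A_1\le\tfrac14$ by multiplying by $\varepsilon^{-3}$ and letting $\varepsilon\to\infty$ (where the quotient $\int|\psi''|^2/\int t^{-2}|\psi'|^2$ appears), and only then, fixing $A_0=\mu_0$ and $A_1=\tfrac14$, lets $\varepsilon\to0$ again to bound $A_2$. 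Your sketch also leans on a Fourier-symbol analysis in $s$ on the half-line, which does not directly apply with the boundary conditions at $s=0$; the paper circumvents this by odd extension of $w$ to the full cylinder before invoking the sharp constant $\mu_0(\Sigma;\Lambda)$ there. These are the steps you would need to supply to turn the sketch into a proof.
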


We refer to Section \ref{S5} for a more complete exposition of our results for cone-like domains like \eqref{general-domains}, in the general weighted case and under the above mentioned orthogonality conditions. In the remarks in that section, we also compare our results with those already known, found in \cite{AGS, GPS, M}. 

\subsection*{Notation}
For reader's convenience, we present some of the notation used throughout this work.
\begin{itemize}[leftmargin=*]
\item $\Sigma$ denotes a $C^2$ domain in $S^{N-1}$ (with $N\ge 2$), 
\begin{equation}\label{CSigma}
C_\Sigma:=\left\{x\in\R^N\setminus\{0\}~:~\frac x{|x|}\in\Sigma\right\}
\end{equation}
is the corresponding cone in $\R^N$, and
\begin{equation}\label{ZSigma}
Z_\Sigma:=\{(t, \omega) \in \R\times\Sigma\}
\end{equation} is the corresponding cylinder in $\R^N$.\vspace{6pt}
\item
$\Lambda_\Sigma$ denotes the spectrum of the Laplace-Beltrami operator in $H^1_0(\Sigma)$ (which is a countable discrete set in $[0,\infty)$) while $\lambda_{\Sigma}$ denotes the principal (i.e. least) eigenvalue. In particular, when $\Sigma=S^{N-1}$, $\Lambda_\Sigma$ is the spectrum of the Laplace-Beltrami operator in $H^1(S^{N-1})$, given by the set $\{j(j+N-2)~:~j=0,1,2,...\}$ and $\lambda_{S^{N-1}}=0$.\vspace{6pt}
\item
Given a cone $C_\Sigma$, we introduce the spaces
\begin{equation}\begin{split}
\label{C20}
C^2_c(C_\Sigma)&:=\{u\in C^2(C_\Sigma)~:~\mathrm{supp}(u)\subset C_\Sigma\}\\
C^2_0(C_\Sigma)&:=\{u\in C^2(\overline{C_\Sigma})~:~u|_{\partial C_\Sigma}=0,~u=0\text{ in neighborhoods of $0$ and $\infty$}\}.
\end{split}
\end{equation} 
Note that when $\Sigma=S^{N-1}$, then $C_\Sigma=\R^N\setminus\{0\}$, and  $C^2_0(C_\Sigma)=C^2_c(C_\Sigma)=C^2_c(\R^N\setminus\{0\})$. 
%\rosso which coincides with $C^2_c(\R^N\setminus\{0\})$ if $\Sigma=S^{N-1}$ and $\Omega= C_\Sigma$.  \nero
%Moreover, f
%\blu For $u\in C^2_0(C_\Sigma)$ we will often write $\int_{C_\Sigma} f(u,\nabla u, D^2u)$ instead of $\int_{C_\Sigma} f(u,\nabla u, D^2u)\,dx$. FORSE TOGLIEREI QUESTA FRASE IN BLU (E ANCHE QUELLA SOTTO) PERCHE' DI FATTO SCRIVIAMO (QUASI) SEMPRE dx\nero
\vspace{6pt}
\item
Given a cylinder $Z_\Sigma$, we introduce the spaces
\begin{equation}\begin{split}
\label{C2Z}
C^2_c(Z_\Sigma)&:=\{w\in C^2(\overline{Z_\Sigma})~:~\mathrm{supp}(w)\subset Z_\Sigma\}\\
C^2_0(Z_\Sigma)&:=\{w\in C^2(\overline{Z_\Sigma})~:~w|_{\partial Z_\Sigma}=0\}\,.
\end{split}
\end{equation} 
%\blu Moreover, for $w\in C^2_0(Z_\Sigma)$ we will often write $\int_{Z_\Sigma} f(w,\nabla w, D^2w)$ instead of $\int_{Z_\Sigma} f(w,\nabla w, D^2w)\,dt\,{\dsigma}$. servono le ipotesi su $w$? su $f$ non mettiamo nulla, forse potremmo non metterle nemmeno su $w$... \nero 
\vspace{6pt}
\item 
For $\alpha\in\R$ we denote
\begin{equation}
\label{ga}
\begin{split}
\gamma_\alpha:=\left(\frac{N-2}2\right)^{\!2}-&\left(\frac{\alpha-2}2\right)^{\!2},\quad\overline\gamma_\alpha:=\left(\frac{N-2}2\right)^{\!2}+\left(\frac{\alpha-2}2\right)^{\!2},\\
&\qquad\widehat\gamma_\alpha:=\frac{N+\alpha-4}2\,.
\end{split}
\end{equation}
\end{itemize}

\section{Hardy-Rellich inequalities on cones \\ via Emden-Fowler transform}\label{S2}
  
In this Section we provide the optimal constant for the power weighted Hardy-Rellich inequalities \eqref{HardyRellich} for test functions in $C^2_0(C_\Sigma)$  or in suitable subsets of $C^2_0(C_\Sigma)$.  More precisely, we denote by $\Lambda$ a nonempty subset of $\Lambda_\Sigma$ and we consider the following sets
\begin{equation}\begin{split}
\label{ortogonale}
C^2_0(C_\Sigma;\Lambda):=\Big \{ u  \in C^2_0(C_\Sigma)~:~&\int_{\Sigma}u(r\omega)\varphi(\omega)\,d\sigma(\omega)=0\quad\forall r>0\,,\\
&\quad\forall\varphi\in H_\lambda\,,~\forall\lambda\in\Lambda_\Sigma\setminus\Lambda \Big\}
\end{split}\end{equation}
where $H_\lambda$ denotes the eigenspace of $-\Delta_\sigma$ in $H^1_0(\Sigma)$ corresponding to the eigenvalue $\lambda$. If $\Lambda=\Lambda_\Sigma$, then $C^2_0(C_\Sigma;\Lambda_\Sigma)=C^2_0(C_\Sigma)$. Finally, for every $\alpha\in\R$ we define 
\[
\mu_\alpha(C_\Sigma;\Lambda):=\inf_{\scriptstyle u\in C^2_0(C_\Sigma;\Lambda)\atop\scriptstyle u\ne 0}\frac{\displaystyle\int_{C_\Sigma}|x|^\alpha|\Delta u|^2\,dx}{\displaystyle\int_{C_\Sigma}|x|^{\alpha-2}|\nabla u|^2\,dx} \quad \text{and} \quad \mu_\alpha(C_\Sigma):=\mu_\alpha(C_\Sigma;\Lambda_\Sigma)\,.
\]

\begin{Theorem}\label{HR-cone}
Let $\Sigma$  be a $C^2$ domain in $S^{N-1}$ ($N\ge 2$) and let $\Lambda$ be a nonempty subset of $\Lambda_\Sigma$. Then, for every $\alpha\in\R$ 
\begin{equation}\label{best-cone}
\mu_\alpha(C_\Sigma;\Lambda)=\min_{\lambda\in \Lambda}\frac{\left(\gamma_\alpha+\lambda\right)^2}{\widehat\gamma_\alpha^2+\lambda}
\end{equation}
where $\gamma_\alpha$ and $\widehat\gamma_\alpha$ are the values defined in \eqref{ga}, with the agreement that if $0\in\Lambda$ and $\widehat\gamma_\alpha=0$ then $\gamma_\alpha^2/\widehat\gamma_\alpha^2=(N-\alpha)^2/4$. %In particular, 
%\[\mu_\alpha(C_\Sigma)=\min_{\lambda\in \Lambda_\Sigma}\frac{\left(\gamma_\alpha+\lambda\right)^2}{\widehat\gamma_\alpha^2+\lambda}.\]
%Then
%\begin{equation}\label{best-cone}
%\inf_{\scriptstyle u\in C^2_0(C_\Sigma)\atop\scriptstyle u\ne 0}\frac{\displaystyle\int_{C_\Sigma}|x|^\alpha|\Delta u|^2\,dx}{\displaystyle\int_{C_\Sigma}|x|^{\alpha-2}|\nabla u|^2\,dx}=\min_{\lambda\in \Lambda_\Sigma}\frac{\left(\gamma_\alpha+\lambda\right)^2}{\widehat\gamma_\alpha^2+\lambda}
%\end{equation}
%where $\Lambda_\Sigma$ is the spectrum of the Laplace-Beltrami operator in $H^1_0(\Sigma)$, and $\gamma_\alpha$ and $\widehat\gamma_\alpha$ are the values defined in \eqref{ga}. In particular, the optimal Hardy-Rellich constant \eqref{best-cone} is positive if and only if $\mathrm{dist}(-\gamma_\alpha,\Lambda_\Sigma)>0$. 
\end{Theorem}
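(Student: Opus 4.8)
The plan is to use the Emden-Fowler transform to turn the Rayleigh quotient defining $\mu_\alpha(C_\Sigma;\Lambda)$ into a weight-free quotient on the cylinder $Z_\Sigma$, and then to diagonalize it through the spectral decomposition of $-\Delta_\sigma$ on $H^1_0(\Sigma)$. Writing $x=r\omega$ with $r=|x|$, $\omega\in\Sigma$, setting $t=\log r$, and introducing $w$ via $u(r\omega)=r^{-\widehat\gamma_\alpha}w(\log r,\omega)$, a direct computation of $\nabla u$ and $\Delta u$ in polar coordinates together with $dx=r^N\,dt\,d\sigma$ shows that the exponent $\widehat\gamma_\alpha=\frac{N+\alpha-4}2$ is exactly the one cancelling all powers of $r$, yielding
\begin{equation*}
\int_{C_\Sigma}|x|^{\alpha-2}|\nabla u|^2\,dx=\int_{Z_\Sigma}\big[(\partial_t w-\widehat\gamma_\alpha w)^2+|\nabla_\sigma w|^2\big]\,dt\,d\sigma
\end{equation*}
and
\begin{equation*}
\int_{C_\Sigma}|x|^{\alpha}|\Delta u|^2\,dx=\int_{Z_\Sigma}\big|\partial_{tt}w+(2-\alpha)\partial_t w+\Delta_\sigma w-\gamma_\alpha w\big|^2\,dt\,d\sigma\,,
\end{equation*}
where the identity $\gamma_\alpha=\tfrac14(N-\alpha)(N+\alpha-4)$ identifies the zeroth-order coefficient. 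The transform carries $C^2_0(C_\Sigma;\Lambda)$ onto the functions $w\in C^2_0(Z_\Sigma)$ with compact support in $t$ whose angular components lie in the eigenspaces $H_\lambda$, $\lambda\in\Lambda$ (here the $C^2$ regularity of $\Sigma$ guarantees, via elliptic regularity, that single-mode functions are admissible test functions).

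Next I would expand $w(t,\cdot)=\sum_\lambda w_\lambda(t)\varphi_\lambda$ along an $L^2(\Sigma)$-orthonormal basis of Dirichlet eigenfunctions $-\Delta_\sigma\varphi_\lambda=\lambda\varphi_\lambda$, keeping only the modes with $\lambda\in\Lambda$ imposed by the orthogonality constraint. Using $\int_\Sigma|\nabla_\sigma\varphi_\lambda|^2=\lambda$ and the orthogonality of the $\varphi_\lambda$ and of their gradients, both integrals decouple into sums over $\lambda$, reducing the quotient to
\begin{equation*}
\frac{\sum_\lambda\int_\R\big[w_\lambda''+(2-\alpha)w_\lambda'-(\gamma_\alpha+\lambda)w_\lambda\big]^2\,dt}{\sum_\lambda\int_\R\big[(w_\lambda'-\widehat\gamma_\alpha w_\lambda)^2+\lambda\,w_\lambda^2\big]\,dt}\,.
\end{equation*}
Denoting by $\nu_\lambda$ the infimum of the single-mode one-dimensional quotient, the full infimum equals $\min_{\lambda\in\Lambda}\nu_\lambda$: the lower bound $\mu_\alpha\ge\min_\lambda\nu_\lambda$ follows from the elementary inequality $\frac{\sum a_\lambda}{\sum b_\lambda}\ge\min_\lambda\frac{a_\lambda}{b_\lambda}$ applied mode by mode, and the matching upper bound is obtained by testing with single-mode functions $w=f(t)\varphi_\lambda$.

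It then remains to evaluate the one-dimensional problem, and here I would pass to the Fourier transform in $t$. By Plancherel, for a single mode the quotient becomes the ratio of $\int|\hat f|^2\big[(\xi^2+\gamma_\alpha+\lambda)^2+(2-\alpha)^2\xi^2\big]\,d\xi$ to $\int|\hat f|^2(\xi^2+\widehat\gamma_\alpha^2+\lambda)\,d\xi$, so that $\nu_\lambda=\inf_{s\ge0}R(s)$ with $R(s)=\frac{(s+\gamma_\alpha+\lambda)^2+(2-\alpha)^2 s}{s+\widehat\gamma_\alpha^2+\lambda}$ and $s=\xi^2$. A short computation gives $R(s)-R(0)=\frac{s\,[\,(\widehat\gamma_\alpha^2+\lambda)s+D_\lambda\,]}{(\widehat\gamma_\alpha^2+\lambda)(s+\widehat\gamma_\alpha^2+\lambda)}$, where, using $2\gamma_\alpha+(2-\alpha)^2=2\overline\gamma_\alpha$ together with the identity $2\widehat\gamma_\alpha^2\,\overline\gamma_\alpha-\gamma_\alpha^2=\widehat\gamma_\alpha^4$, the constant reads $D_\lambda=\lambda^2+\big[2\widehat\gamma_\alpha^2+(2-\alpha)^2\big]\lambda+\widehat\gamma_\alpha^4$. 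Hence $D_\lambda\ge0$ for every $\lambda\ge0$, so $R$ attains its minimum at $s=0$, giving $\nu_\lambda=R(0)=\frac{(\gamma_\alpha+\lambda)^2}{\widehat\gamma_\alpha^2+\lambda}$, as claimed.

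I expect the main obstacle to be precisely this last algebraic step: showing that the minimum of $R$ sits at $s=0$ hinges on the nonnegativity of $D_\lambda$, which in turn relies on the special relations among $\gamma_\alpha$, $\overline\gamma_\alpha$ and $\widehat\gamma_\alpha$ and on $\lambda\ge0$ (for $\lambda<0$ the picture would change). Finally, the degenerate case $\widehat\gamma_\alpha^2+\lambda=0$, possible only for $\lambda=0$ and $\widehat\gamma_\alpha=0$, i.e. $\alpha=4-N$, must be treated separately: there the denominator loses its zeroth-order term, $\gamma_\alpha=0$ and $2-\alpha=N-2$, and a direct Fourier estimate of $\frac{\int\xi^2[\xi^2+(N-2)^2]|\hat f|^2}{\int\xi^2|\hat f|^2}$ yields $\nu_0=(N-2)^2=(N-\alpha)^2/4$, matching the stated convention.
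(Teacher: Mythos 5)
Your proposal is correct and follows essentially the same route as the paper: the Emden-Fowler transform, the decomposition along Dirichlet eigenfunctions of $-\Delta_\sigma$, and the reduction of each mode to a one-dimensional quotient whose infimum is the zero-frequency limit value $\frac{(\gamma_\alpha+\lambda)^2}{\widehat\gamma_\alpha^2+\lambda}$. The only (cosmetic) difference is that you minimize the Fourier symbol ratio $R(s)$ over $s=\xi^2\ge 0$ via Plancherel, whereas the paper proves monotonicity in $\varepsilon^2$ of the quotient along the scaling $y\mapsto y(\varepsilon\,\cdot)$; both steps rest on exactly the same inequality $p_\alpha(\lambda)=\lambda^2+\big[(\alpha-2)^2+2\widehat\gamma_\alpha^2\big]\lambda+\widehat\gamma_\alpha^4\ge 0$ for $\lambda\ge 0$, your $D_\lambda$ being precisely this polynomial.
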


\begin{Remark}\label{RN} 
\begin{itemize}[leftmargin=18pt]
\item[(i)]  From \eqref{best-cone}, the Hardy-Rellich inequality 
\[
\int_{C_\Sigma}|x|^\alpha|\Delta u|^2\,dx\ge \mu_\alpha(C_\Sigma;\Lambda) \int_{C_\Sigma}|x|^{\alpha-2}|\nabla u|^2\,dx\quad\forall u\in C^2_0(C_\Sigma;\Lambda)
\]
holds true with a positive constant if and only if $-\gamma_\alpha \not \in \Lambda$. 
\item[(ii)]
When $\Sigma=S^{N-1}$ and $\Lambda=\Lambda_\Sigma$, \eqref{best-cone} reproves \eqref{best-constant} and reads as follows:
\begin{equation}\label{HR-RN}\begin{split}
&\inf_{\scriptstyle u\in C^2_c(\R^N\setminus\{0\})\atop\scriptstyle u\ne 0}\frac{\displaystyle\int_{\R^N}|x|^\alpha|\Delta u|^2\,dx}{\displaystyle\int_{\R^N}|x|^{\alpha-2}|\nabla u|^2\,dx}\\&\qquad=
\min\left\{\left(\frac{N-\alpha}2 \right)^2\right\}\cup\left\{\frac{\left(\gamma_\alpha+j(j+N-2)\right)^2}{\widehat\gamma_\alpha^2+j(j+N-2)}~:~j=1,2,3,...\right\}\,.
\end{split}
\end{equation}
If $\alpha>2-N$, then the weight $|x|^{\alpha-2}$ is locally integrable and, by a density argument, \eqref{HR-RN} holds for test functions in $C^2_c(\R^N)$ as well.
\item[(iii)] 
When $\Sigma=S^{N-1}$ and $\Lambda=\{0\}$, the class $C_0^2(C_\Sigma;\Lambda)$ is the space of radially symmetric functions in $C^2_c(\R^N\setminus \{0\})$ and %, recalling \eqref{agreement}, 
Theorem \ref{HR-cone} states that
\[
\inf_{\scriptstyle u\in C^2_c(\R^N \setminus \{0\})\atop\scriptstyle u\ne 0,~u
\text{ radial}}\frac{\displaystyle\int_{\R^N}|x|^\alpha|\Delta u|^2\,dx}{\displaystyle\int_{\R^N}|x|^{\alpha-2}|\nabla u|^2\,dx}=\left(\frac{N-\alpha}2\right)^2.
\]
We refer to \cite{Cazacu2} for another version of the Hardy-Rellich inequality in $\R^N\setminus\{0\}$ with the radial component of the gradient instead of the full gradient. 
\item[(iv)] 
When $\Sigma=S^{N-1}$ and $\Lambda=\Lambda_\Sigma\setminus\{0\}$, the class $C_0^2(C_\Sigma;\Lambda)$ is the space of functions in $C^2_c(\R^N \setminus \{0\})$ orthogonal to the subspace of radial functions and Theorem \ref{HR-cone} states that
\[
\inf_{\scriptstyle u\in C^2_c(\R^N \setminus \{0\})\setminus\{0\}\atop\scriptstyle \int_{|x|=r}u\,{\dsigma}=0~\forall r>0}\frac{\displaystyle\int_{\R^N}|x|^\alpha|\Delta u|^2\,dx}{\displaystyle\int_{\R^N}|x|^{\alpha-2}|\nabla u|^2\,dx}=\min_{j=1,2,...}\frac{\left(\gamma_\alpha+j(j+N-2)\right)^2}{\widehat\gamma_\alpha^2+j(j+N-2)}\,.
\]
\item[(vi)] 
The formulas stated in items (iii) and (iv) suggest that, in general, no symmetrization or rearrangement arguments can be applied to study the minimization problem defining $\mu_\alpha(C_\Sigma;\Lambda)$, as the infimum when restricting to radial functions may be larger. Indeed, for example, for $\alpha=0$ and $N=4$, the infimum in the formula stated in (iii) equals $4$, while the one in (iv) equals $3$.The failure of the Szeg\H{o} method is due to the presence of second order derivatives. 
\end{itemize}

\end{Remark}
\noindent
The proof of Theorem \ref{HR-cone} relies on two key tools: 
\begin{itemize}[leftmargin=15pt]
\item[1)]
the Emden-Fowler (or logarithmic-cylindri\-cal) change of variables, which transforms power weighted $L^2$-norms of a mapping and of its derivatives into corresponding weight-free $L^2$-norms with constant coefficients. This is particularly useful for dilation-invariant domains, such as (positive) cones. 
\item[2)] Fourier decomposition with respect to a Hilbertian basis of $H^1_0(\Sigma)$ made by eigenfunctions of the Laplace-Beltrami operator. 
\end{itemize}
%follows by combining the statements of Lemmata \ref{L:EF}--\ref{Csigma:Zsigma} below and rests on 
The Emden-Fowler change of variables works as follows: given a $C^2$ domain $\Sigma$ in $S^{N-1}$, consider the corresponding cylinder $Z_\Sigma$ defined in \eqref{ZSigma} and the space $C^2_0(Z_\Sigma)$ defined in \eqref{C2Z}.
%We point out that a mapping $u\in C^2_0(C_{\Sigma})$ vanishes far away, in a neighborhood of 0, and on the boundary of $C_\Sigma$ but not necessarily on a neighborhood of it. Indeed, $C_c^2(C_\Sigma)$ is strictly contained in $C_0(C_\Sigma)$. 
%Correspondingly, a mapping $w(t,\omega)\in C^2_0(Z_\Sigma)$ vanishes for large $|t|$ and for every $\sigma\in \partial\Sigma$, and $C_c^2(Z_\Sigma)$ is strictly contained in $C_0(Z_\Sigma)$. 
For every $u\colon\overline{C_\Sigma}\setminus\{0\}\to\R$ let $w\colon\overline{Z_\Sigma}\to\R$ be defined by
\begin{equation}
\label{Emden-Fowler}
u(x)=|x|^{\frac{4-N-\alpha}{2}}w\left(-\log|x|,\frac{x}{|x|}\right).
\end{equation}
The correspondence $u\mapsto w=T(u)$ defines the \emph{Emden-Fowler transform}.
\begin{Lemma}
%\label{L:EF}
One has that $u\in C^2_0(C_\Sigma)$ if and only if $w\in C^2_0(Z_\Sigma)$. 
%and $u\in C^2_c(C_\Sigma)$ if and only if $w\in C^2_c(Z_\Sigma)$. 
Moreover:
\begin{gather}
\label{w2}
\int_{C_\Sigma}|x|^{\alpha-4}|u|^2\,dx=\int_{Z_\Sigma}|w|^2\,dt\,{\dsigma}\\
\label{first-derivative-w}
\int_{C_\Sigma}|x|^{\alpha-2}|\nabla u|^2\,dx=\int_{Z_\Sigma}\left(Q_\alpha(w)+|w_t|^2\right)\,dt\,{\dsigma}\\
\label{second-derivative-w}
\int_{C_\Sigma}|x|^{\alpha}|\Delta u|^2\,dx=\int_{Z_\Sigma}\left(|L_\alpha w|^2+|w_{tt}|^2+2|\nabla_\sigma w_t|^2+2\overline\gamma_\alpha|w_t|^2\right)\,dt\,{\dsigma}
\end{gather}
where
\begin{equation}\label{QL}
Q_\alpha(w)=|\nabla_\sigma w|^2+\widehat\gamma_\alpha^2|w|^2,\qquad L_\alpha w=-\Delta_\sigma w+\gamma_\alpha w\,,
\end{equation}
$\gamma_\alpha$, $\overline\gamma_\alpha$, and $\widehat\gamma_\alpha$ are defined in \eqref{ga}.
\end{Lemma}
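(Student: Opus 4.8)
The plan is to pass to polar coordinates $x=r\omega$, $r=|x|>0$, $\omega\in\Sigma$, and then apply the logarithmic substitution $t=-\log r$ (leaving $\omega$ fixed), which is the heart of the Emden--Fowler transform. Writing $\beta:=\frac{4-N-\alpha}{2}=-\widehat\gamma_\alpha$, the transform reads $u(r\omega)=r^\beta w(t,\omega)$ with $t=-\log r$. Since $x\mapsto(-\log|x|,x/|x|)$ is a $C^\infty$ diffeomorphism of $C_\Sigma$ onto $Z_\Sigma$ (polar coordinates composed with the diffeomorphism $r\mapsto-\log r$ of $(0,\infty)$ onto $\R$) and $r^\beta$ is smooth and nowhere vanishing on $C_\Sigma$, multiplication by $r^\beta$ preserves $C^2$-regularity up to the boundary; this gives $u\in C^2(\overline{C_\Sigma}\setminus\{0\})\iff w\in C^2(\overline{Z_\Sigma})$, with $u|_{\partial C_\Sigma}=0\iff w|_{\partial Z_\Sigma}=0$. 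Moreover $r\to0$ and $r\to\infty$ correspond to $t\to+\infty$ and $t\to-\infty$, so $u=0$ in neighborhoods of $0$ and $\infty$ translates into $w$ vanishing for $|t|$ large. This yields the first assertion and shows, in particular, that the $w$'s under consideration have compact support in $t$, a fact I will use freely.

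Next I record the polar expressions $|\nabla u|^2=u_r^2+r^{-2}|\nabla_\sigma u|^2$ and $\Delta u=u_{rr}+\frac{N-1}{r}u_r+r^{-2}\Delta_\sigma u$, and differentiate $u=r^\beta w$ using $\partial_r t=-1/r$. A direct computation gives $u_r=r^{\beta-1}(\beta w-w_t)$ and, after collecting terms and invoking the identities $2\beta+N-2=2-\alpha$ and $\beta(\beta+N-2)=-\gamma_\alpha$ (both forced by the choice of $\beta$),
\[
\Delta u=r^{\beta-2}\big(w_{tt}+(\alpha-2)w_t-L_\alpha w\big),\qquad L_\alpha w=-\Delta_\sigma w+\gamma_\alpha w.
\]
The structural point is that in each of the three integrands the exponents coming from the weight $|x|^{\alpha},|x|^{\alpha-2},|x|^{\alpha-4}$, from the squared factors $r^{2\beta},r^{2\beta-2},r^{2\beta-4}$, and from the Jacobian $r^{N-1}$ always add up to $r^{-1}$; since $r^{-1}\,dr=-dt$, the weighted integrals over $C_\Sigma$ become weight-free integrals over $Z_\Sigma$. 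Identity \eqref{w2} is then immediate; for \eqref{first-derivative-w} I expand $(\beta w-w_t)^2=\beta^2w^2-\beta(w^2)_t+w_t^2$, discard the middle total $t$-derivative (compact $t$-support), and use $\beta^2=\widehat\gamma_\alpha^2$ and the definition of $Q_\alpha$.

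The main work, and the only genuine obstacle, is \eqref{second-derivative-w}. Squaring $P:=w_{tt}+(\alpha-2)w_t-L_\alpha w$ produces three square terms and three cross terms. The term $2(\alpha-2)w_{tt}w_t=(\alpha-2)(w_t^2)_t$ integrates to zero in $t$; the term $-2(\alpha-2)w_tL_\alpha w$ vanishes as well, since both $\int w_t\Delta_\sigma w\,\dsigma=-\tfrac12\partial_t\!\int|\nabla_\sigma w|^2\,\dsigma$ and $\int w_tw\,dt$ are total derivatives. The surviving cross term $-2w_{tt}L_\alpha w=2w_{tt}\Delta_\sigma w-2\gamma_\alpha w_{tt}w$ generates the mixed and lower-order contributions: integrating $\int\!\!\int w_{tt}\Delta_\sigma w\,dt\,\dsigma$ by parts once in $t$ and once on $\Sigma$ (the $\partial\Sigma$ boundary terms dropping because $w$, hence $w_t$, vanishes there) gives $\int\!\!\int|\nabla_\sigma w_t|^2\,dt\,\dsigma$, while $-2\gamma_\alpha\int w_{tt}w\,dt=2\gamma_\alpha\int w_t^2\,dt$ after one integration by parts in $t$. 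Collecting everything, the coefficient of $w_t^2$ is $(\alpha-2)^2+2\gamma_\alpha$, which equals $2\overline\gamma_\alpha$ by \eqref{ga}, producing precisely $|L_\alpha w|^2+w_{tt}^2+2|\nabla_\sigma w_t|^2+2\overline\gamma_\alpha w_t^2$. The delicate points throughout are keeping track of which integrations by parts are in $t$ (justified by the compact $t$-support) and which are on $\Sigma$ (justified by the Dirichlet condition $w|_{\partial\Sigma}=0$), together with verifying the algebraic identities $\beta(\beta+N-2)=-\gamma_\alpha$ and $(\alpha-2)^2+2\gamma_\alpha=2\overline\gamma_\alpha$ that encode exactly why the weighted problem becomes weight-free for this particular exponent.
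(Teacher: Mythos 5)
Your proof is correct, and it is the standard Emden--Fowler computation (polar coordinates, the substitution $t=-\log r$, the algebraic identities forced by the choice $\beta=-\widehat\gamma_\alpha$, and integrations by parts in $t$ and on $\Sigma$ to remove the cross terms). The paper itself does not carry out this computation but delegates the first assertion, \eqref{w2} and \eqref{second-derivative-w} to \cite[Lemma 3.4]{CM} and notes that \eqref{first-derivative-w} is proved similarly; your argument simply makes explicit, correctly and in full, the same calculation that the cited lemma performs.
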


\begin{proof}
The first part of the statement as well as \eqref{w2} and \eqref{second-derivative-w} have been proved in \cite[Lemma 3.4]{CM}. The proof of \eqref{first-derivative-w} can be carried out in a similar way (see also \cite{CM2}). 
\end{proof}

\noindent
\begin{proof}[Proof of Theorem \ref{HR-cone}]
Let $u\in C^2_0(C_\Sigma;\Lambda)$ and let $w=T(u)\in C^2_0(Z_\Sigma)$ be the Emden-Fowler transform of $u$. Then, by \eqref{ortogonale}, 
\begin{equation}\label{ortogonaleZ}
\int_{\Sigma}w(t,\omega)\varphi(\omega)\,d\sigma(\omega)=0\quad  \forall t\in\R\,,  ~\forall\varphi\in H_\lambda\,,~\forall\lambda\in\Lambda_\Sigma\setminus\Lambda\,.
\end{equation}
Let $F(w)$ and $G(w)$ be the right hand sides of \eqref{second-derivative-w} and \eqref{first-derivative-w}, respectively.
%\begin{equation}\label{defFG}\begin{split}
%F(w)&:=\int_{Z_\Sigma}\left(|-\Delta_\sigma w+\gamma_\alpha w|^2+|w_{tt}|^2+2|\nabla_\sigma w_t|^2+2\overline\gamma_\alpha|w_t|^2\right)\,dt\,{\dsigma}\\
%G(w)&:=\int_{Z_\Sigma}\left(|\nabla_\sigma w|^2+\widehat\gamma_\alpha^2|w|^2+|w_t|^2\right)\,dt\,{\dsigma}\,,
%\end{split}\end{equation}
Let $\{\varphi_j\}_{j\in\mathbb{N}}$ be a Hilbertian basis of $H^1_0(\Sigma)\cap H^2(\Sigma)$ made by eigenfunctions of $-\Delta_\sigma$ and define:
\begin{equation}\label{proiez}
y_j(t)=\int_\Sigma \varphi_j(\omega)w(t,\omega)\,d\sigma(\omega)\,,\quad w_j(t,\omega)=y_j(t)\varphi_j(\omega)\quad(j\in\mathbb{N}).
\end{equation}
If $\varphi_i$ and $\varphi_j$ are distint eigenfunctions, then
%\begin{equation}\label{orthogonal}
\[
\int_\Sigma\varphi_i\varphi_j\,{\dsigma}=\int_\Sigma\nabla_\sigma\varphi_i\cdot\nabla_\sigma\varphi_j\,{\dsigma}=0
\quad\text{and}\quad \int_\Sigma|\nabla_\sigma\varphi_j|^2\,{\dsigma}=\lambda_j\int_\Sigma|\varphi_j|^2\,{\dsigma}.
%\int_\Sigma(\Delta_\sigma\varphi_i)(\Delta_\sigma\varphi_j)\,{\dsigma}=\int_\Sigma\varphi_i(\Delta_\sigma\varphi_j)\,{\dsigma}=0.
\]
%\end{equation}
Recalling \eqref{ortogonaleZ}, this directly implies that  
\begin{equation}\label{FG-decomposition}
F(w)=\sum_{j\in I_\Lambda}F(w_j)\quad\text{and}\quad G(w)=\sum_{j\in I_\Lambda}G(w_j)
\end{equation}
where $I_\Lambda=\{j\in\mathbb{N}~:~\lambda_j\in  \Lambda  \}$. 
We now show that
\begin{equation}\label{j-estimate}
{F(w_j)}\ge\frac{\left(\lambda_j+\gamma_\alpha\right)^2}{\lambda_j+\widehat\gamma_\alpha^2}{G(w_j)}\,.
\end{equation}
To prove \eqref{j-estimate}, for every $y\in C^2_c(\R)$, denoting $y_\varepsilon(t)=y(\varepsilon t)$, we observe that
\[
\inf_{\scriptstyle y\in C^2_c(\R)\atop\scriptstyle y\ne 0}\frac{F(y\varphi_j)}{G(y\varphi_j)}=
\inf_{\scriptstyle y\in C^2_c(\R)\setminus\{0\}\atop\scriptstyle \varepsilon>0}\frac{F(y_\varepsilon\varphi_j)}{G(y_\varepsilon\varphi_j)}.
\]
Now, we compute
\begin{equation}\label{f-epsilon}
\frac{F(y_\varepsilon\varphi_j)}{G(y_\varepsilon\varphi_j)}=\frac{\displaystyle(\lambda_j+\gamma_\alpha)^2\int_\R|y|^2\,dt+2\varepsilon^2(\lambda_j+\overline\gamma_\alpha)\int_\R|y'|^2\,dt+\varepsilon^4\int_\R|y''|^2\,dt}{\displaystyle(\lambda_j+\widehat\gamma_\alpha^2)\int_\R|y|^2\,dt+\varepsilon^2\int_\R|y'|^2\,dt}.
\end{equation}
Let us study the right hand side in \eqref{f-epsilon} as a function of $\varepsilon^2$. In general, for a mapping of the form
\[
f(\varepsilon)=\frac{A+2\varepsilon B+C\varepsilon^2}{D+\varepsilon E}
\]
with $A,...,E>0$, elementary calculus shows that $f'(\varepsilon)>0$ for every $\varepsilon>0$ if $2BD-AE\ge 0$. For the specific case $f(\varepsilon^2)$, defined by the right-hand side of \eqref{f-epsilon}, we calculate:

\begin{equation}
\begin{split}
2BD-AE&=\left[2(\lambda_j+\overline\gamma_\alpha)(\lambda_j+\widehat\gamma_\alpha^2)-(\lambda_j+\gamma_\alpha)^2\right]\int_\R|y|^2\,dt\int_\R|y'|^2\,dt\\
%&=\left[\lambda_j^2+2\lambda_j(\overline\gamma_\alpha+\widehat\gamma_\alpha^2-\gamma_\alpha)+(2\overline\gamma_\alpha\widehat\gamma_\alpha^2-\gamma_\alpha^2)\right]\int_\R|y|^2\,dt\int_\R|y'|^2\,dt\,.\\
\label{2BD-AE}
&=p_\alpha(\lambda_j)\int_\R|y|^2\,dt\int_\R|y'|^2\,dt
\end{split}
\end{equation}
where
\begin{equation}\label{p-alpha}
p_\alpha(\lambda)=\lambda^2+\left[\left(\alpha-2\right)^2+2\,\widehat\gamma_\alpha^2\right]\lambda+\widehat\gamma_\alpha^4
\end{equation}
because
%Moreover we observe that
\begin{equation*}
%\label{useful_identities}
\overline\gamma_\alpha+\widehat\gamma_\alpha^2-\gamma_\alpha=2\left(\frac{\alpha-2}2\right)^2+\widehat\gamma_\alpha^2\ge 0\quad\text{and}\quad 2\overline\gamma_\alpha\widehat\gamma_\alpha^2-\gamma_\alpha^2=\widehat\gamma_\alpha^4\ge 0\,.
\end{equation*}
Since $\lambda_j\ge 0$, by \eqref{2BD-AE} and \eqref{p-alpha}, we obtain that $2BD-AE\ge 0$. Thus, $f$ is increasing on $[0,\infty)$ and then
\[
f(\varepsilon^2)> f(0)=\frac{(\lambda_j+\gamma_\alpha)^2}{\lambda_j+\widehat\gamma_\alpha^2}\quad\forall\varepsilon>0\,.
\]
This establishes \eqref{j-estimate}.  Using \eqref{first-derivative-w}--\eqref{second-derivative-w}, \eqref{FG-decomposition} and \eqref{j-estimate}, we conclude:
\begin{equation}\label{crucial_ineq}
\begin{split}
\int_{C_\Sigma}|x|^\alpha|\Delta u|^2\,dx=F(w)&=\sum_{j\in I_\Lambda}F(w_j)\ge\sum_{j\in I_\Lambda}\frac{\left(\lambda_j+\gamma_\alpha\right)^2}{\lambda_j+\widehat\gamma_\alpha^2}\,G(w_j)\\
&\ge\min_{\lambda\in\Lambda}\frac{\left(\lambda+\gamma_\alpha\right)^2}{\lambda+\widehat\gamma_\alpha^2}\sum_{j\in\mathbb{N}}G(w_j)=\min_{\lambda\in\Lambda}\frac{\left(\lambda+\gamma_\alpha\right)^2}{\lambda+\widehat\gamma_\alpha^2}\,G(w)\\
&=\min_{\lambda\in\Lambda}\frac{\left(\lambda+\gamma_\alpha\right)^2}{\lambda+\widehat\gamma_\alpha^2}\int_{C_\Sigma}|x|^{\alpha-2}|\nabla u|^2\,dx\,.
\end{split}
\end{equation}
Hence, $\mu_\alpha(C_\Sigma;\Lambda)\ge\min_{\lambda\in\Lambda}\frac{\left(\lambda+\gamma_\alpha\right)^2}{\lambda+\widehat\gamma_\alpha^2}$. Let us establish the opposite inequality. Consider $\overline\lambda\in\Lambda$ such that 
\begin{equation}\label{lambdasegnato}
\frac{\left(\overline\lambda+\gamma_\alpha\right)^2}{\overline\lambda+\widehat\gamma_\alpha^2}=\min_{\lambda\in\Lambda}\frac{\left(\lambda+\gamma_\alpha\right)^2}{\lambda+\widehat\gamma_\alpha^2}.
\end{equation} 
Let $\varphi\in H_{\overline\lambda}$, and fix $y\in C^2_c(\R)$. For every $\varepsilon>0$, define $w_\varepsilon\colon\overline{Z_\Sigma}\to\R$ by
\[
w_\varepsilon(t,\omega)=y(\varepsilon t)\varphi(\omega).
\]
Then $w_\varepsilon\in C^2_0(Z_\Sigma)$ and $u_\varepsilon:=T^{-1}(w_\varepsilon)\in C^2_0(C_\Sigma;\Lambda)$. Using the same computations as before, we find:
\begin{equation}
\label{sviluppi-epsilon}
\begin{split}
&\frac{\displaystyle\int_{C_\Sigma}|x|^\alpha|\Delta u_\varepsilon|^2\,dx}{\displaystyle\int_{C_\Sigma}|x|^{\alpha-2}|\nabla u_\varepsilon|^2\,dx}=
\frac{F(y_\varepsilon\varphi)}{G(y_\varepsilon\varphi)}\\
&\quad=\frac{\displaystyle(\overline\lambda+\gamma_\alpha)^2\int_\R|y|^2\,dt+2\varepsilon^2(\overline\lambda+\overline\gamma_\alpha)\int_\R|y'|^2\,dt+\varepsilon^4\int_\R|y''|^2\,dt}{\displaystyle(\overline\lambda+\widehat\gamma_\alpha^2)\int_\R|y|^2\,dt+\varepsilon^2\int_\R|y'|^2\,dt}.
\end{split}
\end{equation}
Taking the limit as $\varepsilon\to 0$, and invoking \eqref{lambdasegnato}, we obtain that 
$\mu_\alpha(C_\Sigma;\Lambda)\le\min_{\lambda\in\Lambda}\frac{\left(\lambda+\gamma_\alpha\right)^2}{\lambda+\widehat\gamma_\alpha^2}$. 
\end{proof}

\begin{Remark}
\label{R:principal}
If
\begin{equation}
\label{principal}
\frac{8-N}{3}\leq \alpha \leq N
\end{equation} 
the mapping $\lambda\mapsto \frac{\left(\gamma_\alpha+\lambda \right)^2}{\widehat\gamma_\alpha^2+\lambda}$ turns out to be increasing in $(0,\infty)$.
Then,  for all $\Lambda$ nonempty subset of $\Lambda_\Sigma$, if we set $\lambda_{\Lambda}:=\min\Lambda \ge 0$, we have
\begin{equation}
\label{mu-alpha-principal}
\mu_\alpha(C_\Sigma;\Lambda)=
\frac{(\lambda_{\Lambda}+\gamma_\alpha)^2}{\lambda_{\Lambda}+\widehat\gamma_\alpha^2}\,.
\end{equation}
Clearly, if $\Lambda= \Lambda_{\Sigma}$, then $\lambda_{\Lambda}=\lambda_\Sigma$. In particular, if \eqref{principal} holds true and we denote by $\Lambda_{k}=\{j(j+N-2)\}_{j\geq k}$ for some positive integer $k$, recalling \eqref{HR-RN}, then
\begin{equation}
\label{mualpha-Rn}
\frac{\left(\gamma_\alpha+k(k+N-2)\right)^2}{\widehat\gamma_\alpha^2+k(k+N-2)}=\mu_\alpha(\R^N\setminus\{0\},\Lambda_{k})>\mu_\alpha(\R^N\setminus\{0\})=\left(\frac{N-\alpha}2\right)^2\,.
\end{equation}

We point out that \eqref{principal} is not necessary for \eqref{mu-alpha-principal} to hold. For example, in the weight-free case, i.e., $\alpha=0$, and when $\Lambda= \Lambda_{\Sigma}$, \eqref{mualpha-Rn} holds if and only if $N\ge 5$. Moreover, for $\alpha=0$ and $\Sigma$ the half-sphere, a direct inspection reveals that \eqref{mu-alpha-principal} with $\Lambda=\Lambda_\Sigma$ holds true in any dimension $N\ge 2$ (see \cite[Proposition 4.5]{CM}). Therefore, denoting by $\R^N_+$ any half-space, since  $\lambda_\Sigma=N-1$, we infer that
\begin{equation*}
%\label{mu0-Rn+}
\mu_0(\R^N_+)= \frac{(N^2-4)^2}{4[(N-2)^2+8]}  \quad\forall N\ge 2\,.
\end{equation*}
\end{Remark}

\begin{Remark}
%\label{sobolev-space}
From \cite{CM}, we recall that 
\begin{equation}\label{Rellich-cones}
\inf_{\scriptstyle u\in C^2_0(C_\Sigma)\atop\scriptstyle u\ne 0}\frac{\displaystyle\int_{C_\Sigma}|x|^\alpha|\Delta u|^2\,dx}{\displaystyle\int_{C_\Sigma}|x|^{\alpha-4}|u|^2\,dx}=\min_{\lambda\in \Lambda_\Sigma}{\left(\gamma_\alpha+\lambda\right)^2}\,.
\end{equation}
Hence, for proper cones $C_\Sigma\ne\R^N\setminus\{0\}$,  the Hardy-Rellich inequality in $C^2_0(C_\Sigma)$ holds true with a positive constant (i.e., $\mu_\alpha(C_\Sigma)>0$) if and only if the corresponding Rellich inequality on cones does. In the case  $C_\Sigma=\R^N\setminus\{0\}$ (i.e., $\Sigma=S^{N-1}$), if  the Rellich inequality in $C^2_c(\R^N\setminus\{0\})$ holds true with a positive constant, then also the Hardy-Rellich inequality does. However, the converse is not true: when $\gamma_\alpha=0$ (i.e., $\alpha= 4-N$), the infimum in \eqref{Rellich-cones} is 0, whereas the optimal Hardy-Rellich constant is $\min\{(N-2)^2,N-1\}$ which is positive for all $N>2$.
\end{Remark}

 As soon as $\mu_\alpha(C_\Sigma)>0$, one can define the Sobolev space $H^2\cap H^1_0(C_\Sigma;|x|^\alpha)$ as the completion of $C^2_0(C_\Sigma)$ with respect to the norm
\begin{equation*}
%\label{norm-2-alpha}
\|u\|_{2,\alpha}:=\left(\int_{C_\Sigma}|x|^\alpha|\Delta u|^2\,dx\right)^{\frac12}\,.
\end{equation*}
By density, the Hardy-Rellich inequality holds true for every $u\in H^2\cap H^1_0(C_\Sigma;|x|^\alpha)$. 
We now show that the optimal constant \eqref{best-cone} is never attained in $H^2\cap H^1_0(C_\Sigma;|x|^\alpha)$. 
This outcome is not surprising, since we deal with a minimization problem involving a ratio of integrals with the same homogeneity, which is invariant with respect to the action of a noncompact group (dilations). This issue was previously addressed in \cite{TZ} and \cite{Cazacu} for the entire space when $\alpha=0$. Here we discuss the more general weighted case on cones, employing a completely different technique.

%\rosso aggiungiamo nelle ipotesi del teorema sotto $\mu_\alpha(C_\Sigma)>0$ ovvero $-\gamma_\alpha \not \in \Lambda_\Sigma$? Se $\mu_\alpha(C_\Sigma)=0$, ragionando come nel caso Dirichlet, credo venga $w_t=0$ q.o., ma una funzione che dipende solo da $\sigma$ non pu\'o stare in $C^2_0(C_\Sigma)$...  \nero
\begin{Theorem}\label{not_att}
Under the same assumptions of Theorem \ref{HR-cone}, if $\mu_\alpha(C_\Sigma)>0$, then $\mu_\alpha(C_\Sigma;\Lambda)$ is never attained if considered in the space of mappings $u\in H^2\cap H^1_0(C_\Sigma;|x|^\alpha)$ satisfying
\begin{equation}
\label{ortogonale-quasi-ovunque}
\int_{\Sigma}u(r\omega)\varphi(\omega)\,d\sigma(\omega)=0\quad\text{for a.e. } r>0\,,~\forall\varphi\in H_\lambda\,,~\forall\lambda\in\Lambda_\Sigma\setminus\Lambda
\end{equation}
where $H_\lambda$ is the eigenspace of $-\Delta_\sigma$ in $H^1_0(\Sigma)$ corresponding to the eigenvalue $\lambda$. 
\end{Theorem}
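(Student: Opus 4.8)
The plan is to carry the whole problem to the cylinder via the Emden--Fowler transform $T$ and to show that equality in the chain \eqref{crucial_ineq} can be realized only by the zero function. Since we assume $\mu_\alpha(C_\Sigma)>0$, the Hardy--Rellich inequality \eqref{best-cone} with $\Lambda=\Lambda_\Sigma$ yields $G(w)\le\mu_\alpha(C_\Sigma)^{-1}F(w)$ on $C^2_0(C_\Sigma)$, so $G$ extends to a continuous quadratic form on the completion $H:=H^2\cap H^1_0(C_\Sigma;|x|^\alpha)$, and both the inequality and the modal splitting \eqref{FG-decomposition} persist on $H$ by density. First I would record that, writing $w=\sum_j y_j\varphi_j$ with $y_j(t)=\int_\Sigma w(t,\omega)\varphi_j(\omega)\,d\sigma$, the form $F$ and $G$ diagonalize: because the $\varphi_j$ are orthonormal eigenfunctions of $-\Delta_\sigma$, all cross terms vanish and $F(w)=\sum_j F(y_j\varphi_j)$, $G(w)=\sum_j G(y_j\varphi_j)$ with nonnegative summands, where each $F(y\varphi_j)=(\lambda_j+\gamma_\alpha)^2\int_\R|y|^2\,dt+2(\lambda_j+\overline\gamma_\alpha)\int_\R|y'|^2\,dt+\int_\R|y''|^2\,dt$ and $G(y\varphi_j)=(\lambda_j+\widehat\gamma_\alpha^2)\int_\R|y|^2\,dt+\int_\R|y'|^2\,dt$, exactly as in \eqref{f-epsilon}. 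The constraint \eqref{ortogonale-quasi-ovunque} survives the completion because $F$ dominates each modal contribution and, for the constrained indices $\lambda\in\Lambda_\Sigma\setminus\Lambda$, one has $\lambda+\gamma_\alpha\ne0$ (this is precisely what $\mu_\alpha(C_\Sigma)>0$ guarantees), so $w\mapsto y_j$ is continuous from $H$ into $L^2(\R)$ there; hence only the indices $j\in I_\Lambda$ are active.

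Next, suppose for contradiction that the infimum is attained by some $u^*\in H$ satisfying \eqref{ortogonale-quasi-ovunque} with $u^*\ne0$, and set $w^*=T(u^*)=\sum_{j\in I_\Lambda}y_j^*\varphi_j$, so that $F(w^*)=m\,G(w^*)$ with $m:=\mu_\alpha(C_\Sigma;\Lambda)$ and $G(w^*)>0$ (otherwise $F(w^*)=m\,G(w^*)=0$ and $u^*=0$). From the term-wise bounds $F(y_j^*\varphi_j)\ge\frac{(\lambda_j+\gamma_\alpha)^2}{\lambda_j+\widehat\gamma_\alpha^2}G(y_j^*\varphi_j)\ge m\,G(y_j^*\varphi_j)$, that is \eqref{j-estimate} together with the definition of $m$, summation and the global equality $F(w^*)=m\,G(w^*)$ force equality in every term. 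Thus for each nontrivial mode (i.e.\ $F(y_j^*\varphi_j)>0$) one gets both $\frac{(\lambda_j+\gamma_\alpha)^2}{\lambda_j+\widehat\gamma_\alpha^2}=m$ and the single-mode identity $F(y_j^*\varphi_j)=\frac{(\lambda_j+\gamma_\alpha)^2}{\lambda_j+\widehat\gamma_\alpha^2}G(y_j^*\varphi_j)$. It therefore remains to prove that this single-mode identity forces $y_j^*=0$.

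For this I would use the defect already implicit in the proof of Theorem \ref{HR-cone}. When $\lambda_j+\widehat\gamma_\alpha^2>0$, rearranging via \eqref{2BD-AE}--\eqref{p-alpha} gives
\[
F(y\varphi_j)-\frac{(\lambda_j+\gamma_\alpha)^2}{\lambda_j+\widehat\gamma_\alpha^2}\,G(y\varphi_j)
=\frac{p_\alpha(\lambda_j)\int_\R|y'|^2\,dt+(\lambda_j+\widehat\gamma_\alpha^2)\int_\R|y''|^2\,dt}{\lambda_j+\widehat\gamma_\alpha^2}\,,
\]
with $p_\alpha(\lambda_j)\ge0$. Since the coefficient of $\int_\R|y''|^2\,dt$ is strictly positive, vanishing of the defect forces $\int_\R|y''|^2\,dt=0$, i.e.\ $y$ affine; as $\lambda_j+\gamma_\alpha\ne0$, finiteness of $F(y\varphi_j)$ controls $\int_\R|y|^2\,dt$, and an affine $L^2(\R)$ function must vanish. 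In the remaining degenerate case $\widehat\gamma_\alpha=0$ (which forces $\gamma_\alpha=0$ and $m=(N-\alpha)^2/4$) the relevant mode is $\lambda_j=0$, where $F(y\varphi_0)=2\overline\gamma_\alpha\int_\R|y'|^2\,dt+\int_\R|y''|^2\,dt$ and $G(y\varphi_0)=\int_\R|y'|^2\,dt$; equality again forces $\int_\R|y''|^2\,dt=0$, i.e.\ $y$ affine, and finiteness of $\int_\R|y'|^2\,dt$ then forces $y$ constant, so this mode is null in $H$. In all cases $y_j^*=0$ for every $j$, whence $w^*=0$ and $u^*=0$, a contradiction.

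The genuinely delicate point is not the mode-by-mode strict inequality, which is essentially read off from the computation behind Theorem \ref{HR-cone}, but the transition to the completion $H$: one must verify that the orthogonal splitting $F=\sum_j F(y_j\varphi_j)$, $G=\sum_j G(y_j\varphi_j)$, the bound $G\le\mu_\alpha(C_\Sigma)^{-1}F$, and the constraint \eqref{ortogonale-quasi-ovunque} all extend from $C^2_0(C_\Sigma)$ to $H$, and that the hypothetical minimizer really admits such a decomposition with nonnegative summands, so that equality can be propagated term by term. The degenerate case $\widehat\gamma_\alpha=0$ must be isolated from the outset, since there $F$ fails to control $\int_\R|y_0|^2\,dt$ and the step \emph{affine $\Rightarrow$ zero} has to be replaced by \emph{affine with finite Dirichlet energy $\Rightarrow$ constant $\Rightarrow$ null in $H$}.
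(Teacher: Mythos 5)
Your proposal is correct, and it shares the paper's skeleton (Emden--Fowler transform, modal decomposition $w=\sum_j y_j\varphi_j$, propagation of equality through the chain \eqref{crucial_ineq}) but diverges at the decisive final step. The paper, after forcing equality, only concludes that $\overline w$ concentrates on the eigenspace of the minimizing eigenvalue $\overline\lambda$, and then derives the contradiction by a second use of the scaling trick: the competitor $w^\varepsilon(t,\omega)=\sum_j \overline y_j(\varepsilon t)\varphi_j(\omega)$ realizes a strictly smaller Rayleigh quotient for $0<\varepsilon<1$ because $f(\varepsilon)=\frac{A+2\varepsilon B+C\varepsilon^2}{D+\varepsilon E}$ is strictly increasing. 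You instead close the argument mode by mode through the exact defect identity
\[
F(y\varphi_j)-\frac{(\lambda_j+\gamma_\alpha)^2}{\lambda_j+\widehat\gamma_\alpha^2}\,G(y\varphi_j)
=\frac{p_\alpha(\lambda_j)\int_\R|y'|^2\,dt+(\lambda_j+\widehat\gamma_\alpha^2)\int_\R|y''|^2\,dt}{\lambda_j+\widehat\gamma_\alpha^2}\,,
\]
which I have checked is algebraically exact; vanishing of the defect forces $y''\equiv 0$, and square-integrability (available because $\mu_\alpha(C_\Sigma)>0$ rules out $\lambda_j+\gamma_\alpha=0$) kills the affine profile. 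This buys a slightly more transparent rigidity statement --- the one-dimensional inequality \eqref{j-estimate} is strict on every nonzero profile --- at the cost of having to isolate the degenerate mode $\lambda_j=\widehat\gamma_\alpha=0$ by hand, which you do correctly (affine with finite Dirichlet energy $\Rightarrow$ constant $\Rightarrow$ zero norm, hence the zero element of the completion). The paper's scaling route handles that degeneracy implicitly. Your remarks on extending the decomposition, the quadratic forms, and the orthogonality constraint to the completion match what the paper asserts via \cite[Lemma 3.5]{CM}, so no gap there either.
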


\begin{proof} 
The Emden-Fowler transform $u\mapsto T(u)=w$ given by \eqref{Emden-Fowler} defines an isomorphism between the spaces $H^2\cap H^1_0(C_\Sigma;|x|^\alpha)$ and $H^2\cap H^1_0(Z_\Sigma)$, see \cite[Lemma 3.5]{CM}. Moreover \eqref{w2}--\eqref{second-derivative-w} hold true also for every $u\in H^2\cap H^1_0(C_\Sigma;|x|^\alpha)$. Assume,  by contradiction, that the infimum $\mu_\alpha(C_\Sigma;\Lambda)$ is attained by some $\overline{u}\in H^2\cap H^1_0(C_\Sigma;|x|^\alpha)$, $\overline{u}\ne 0$, satisfying \eqref{ortogonale-quasi-ovunque}. Let $\overline{w}=T(\overline{u})\in H^2\cap H^1_0(Z_\Sigma)$. 
% satisfies
%$$F(\overline{w})=\mu_\alpha(C_\Sigma)G(\overline{w})=\min_{\lambda\in\Lambda_{\Sigma}}\frac{\left(\lambda+\gamma_\alpha\right)^2}{\lambda+\widehat\gamma_\alpha^2}\,G(\overline{w})\,.$$
Expanding $\overline{w}$ as in \eqref{proiez}, writing \eqref{crucial_ineq} for $w=\overline{w}$, 
%and exploiting the above identity, 
we infer that all inequalities in  \eqref{crucial_ineq} must be equalities. Hence, we get
\[
\sum_{j\in I_\Lambda}\frac{\left(\lambda_j+\gamma_\alpha\right)^2}{\lambda_j+\widehat\gamma_\alpha^2}\,G(\overline w_j)=\min_{\lambda\in\Lambda}\frac{\left(\lambda+\gamma_\alpha\right)^2}{\lambda+\widehat\gamma_\alpha^2}\sum_{j\in _\Lambda}G(\overline w_j)=\frac{\left(\overline\lambda+\gamma_\alpha\right)^2}{\overline\lambda+\widehat\gamma_\alpha^2}\sum_{j\in _\Lambda}G(\overline w_j)\,,
\]
where $\overline\lambda \in \Lambda$ is the eigenvalue attaining the above minimum. In particular, the above identity makes sense only if  
\[
\overline{w}(t,\omega)=\sum_{j \in J_{\overline\lambda}}\overline w_{j}(t,\omega)=\sum_{j \in J_{\overline\lambda}} \overline y_{j}(t)\varphi_{j}(\omega)\,,
\]
where $ J_{\overline\lambda}=\{j \in \mathbb{N}:\varphi_{j} \text{ is an eigenfunction with eigevalue }\overline\lambda \}$. Clearly, the set $ J_{\overline\lambda}$ is finite and nonempty.
 For $0<\varepsilon<1$, let now 
 \[
 w^{\varepsilon}(t,\omega)=\sum_{j \in J_{\overline\lambda}} w^{\varepsilon}_j(t,\omega)=\sum_{j \in J_{\overline\lambda}} \overline y_{j}(\varepsilon t)\varphi_{j}(\omega)\,.
 \]  
 By slightly modifying the computations in \eqref{sviluppi-epsilon} and exploited to estimate \eqref{f-epsilon}, for $0<\varepsilon<1$ we get
 \begin{equation}\label{contrad}
\begin{split} 
& \mu_\alpha(C_\Sigma;\Lambda) \leq \frac{F(w^{\varepsilon})}{G(w^{\varepsilon})} =\frac{\sum_{j \in J_{\overline\lambda}}F(w_j^{\varepsilon})}{\sum_{j \in J_{\overline\lambda}}G(w_j^{\varepsilon})}=
 \\
&\frac{\displaystyle(\overline\lambda+\gamma_\alpha)^2\sum_{j \in J_{\overline\lambda}}\int_\R| \overline y_j|^2\,dt+2\varepsilon^2(\overline\lambda+\overline\gamma_\alpha)\sum_{j \in J_{\overline\lambda}}\int_\R | \overline y_j'|^2\,dt+\varepsilon^4 \sum_{j \in J_{\overline\lambda}} \int_\R | \overline y_j''|^2\,dt}{\displaystyle(\overline\lambda+\widehat\gamma_\alpha^2)\sum_{j \in J_{\overline\lambda}} \int_\R| \overline y_j|^2\,dt+\varepsilon^2 \sum_{j \in J_{\overline\lambda}}\int_\R | \overline y_j'|^2\,dt} \\
  &\displaystyle<\frac{\sum_{j \in J_{\overline\lambda}}F(\overline w_j)}{\sum_{j \in J_{\overline\lambda}}G(\overline w_j)}=\frac{F(\overline{w})}{G(\overline{w})} = \mu_\alpha(C_\Sigma;\Lambda)
\end{split}
\end{equation}
 where the strict inequality comes by exploiting the same strictly increasing function $f(\varepsilon)=\frac{A+2\varepsilon B+C\varepsilon^2}{D+\varepsilon E}$ introduced in the proof of Theorem \ref{HR-cone} with a slightly different choice of the coefficients $A,...,E>0$ (since $\overline{w} \neq 0$) still satisfying the condition $2BD-AE\ge 0$ which indeed here reads
\[
2BD-AE=p_\alpha(\overline\lambda)\left( \sum_{j \in J_{\overline\lambda}}\int_\R| \overline y_j|^2\,dt\right)\left( \sum_{j \in J_{\overline\lambda}}\int_\R| \overline y'_j|^2\,dt\right)
\]
with $p_\alpha$ as in \eqref{p-alpha}. In particular $p_\alpha(\overline\lambda)\ge 0$. 
%\[\begin{split}
%&2BD-AE=\\
%&=\left[2(\lambda_{j_0}+\overline\gamma_\alpha)(\lambda_{j_0}+\widehat\gamma_\alpha^2)-(\lambda_{j_0}+\gamma_\alpha)^2\right]\left( \sum_{j \in J_0}\int_\R| \overline y_j|^2\,dt\right)\left( \sum_{j \in J_0}\int_\R| \overline y'_j|^2\,dt\right)\\
%&=\left[\lambda_{j_0}^2+2\lambda_{j_0}(\overline\gamma_\alpha+\widehat\gamma_\alpha^2-\gamma_\alpha)+(2\overline\gamma_\alpha\widehat\gamma_\alpha^2-\gamma_\alpha^2)\right]\left( \sum_{j \in J_0}\int_\R| \overline y_j|^2\,dt\right)\left( \sum_{j \in J_0}\int_\R| \overline y'_j|^2\,dt\right)\,.
%\end{split}\]
Finally, \eqref{contrad} gives a contradiction and concludes the proof. 
\end{proof}

\begin{Remark}[Hardy-Rellich inequality versus Rellich inequality] 
Since $\gamma_\alpha=\gamma_{4-\alpha}$, the optimal Rellich constant \eqref{Rellich-cones} is the same with respect to the reflection $\alpha\mapsto 4-\alpha$ about 2. This is not true for the optimal Hardy-Rellich constant \eqref{best-cone}, since in general $\widehat\gamma^2_\alpha\ne\widehat\gamma^2_{4-\alpha}$. In fact, the deep reason is related to the Kelvin transform. More precisely, for every $u\colon\overline{C_\Sigma}\setminus\{0\}\to\R$, let $\hat{u}\colon\overline{C_\Sigma}\setminus\{0\}\to\R$ be defined by
\[
\hat{u}(x):=|x|^{2-N}u\left(\frac x{|x|^2}\right).
\]
Then, noticing that the orthogonality condition \eqref{ortogonale} does not involve the radial variable, $u\in C^2_0(C_\Sigma; \Lambda)$ if and only if $\hat{u}\in C^2_0(C_\Sigma; \Lambda)$. Moreover:
\begin{equation}\label{-alpha1}
\int_{C_\Sigma}|x|^{\alpha-4}|\hat{u}|^2\,dx=\int_{C_\Sigma}|x|^{-\alpha}|u|^2\,dx\,,\qquad\int_{C_\Sigma}|x|^{\alpha}|\Delta \hat{u}|^2\,dx=\int_{C_\Sigma}|x|^{4-\alpha}|\Delta u|^2\,dx
\end{equation}
whereas
\begin{equation}\label{-alpha2}
\begin{split}
&\int_{C_\Sigma}|x|^{\alpha-2}|\nabla\hat{u}|^2 \,dx=\int_{C_\Sigma}|x|^{2-\alpha}|\nabla u|^2\,dx +(N-2)(\alpha-2)\int_{C_\Sigma} |x|^{-\alpha}|u|^2\,dx\,. 
\end{split}
\end{equation}
\end{Remark}
As a consequence of \eqref{-alpha1}--\eqref{-alpha2} and Theorem \ref{HR-cone}, we plainly obtain the following Schmincke's type inequality \cite{SC} on cones:

\begin{Theorem}\label{T:Schmincke-type}
Let $\Sigma$ be a $C^2$ domain in $S^{N-1}$ with $N\ge 3$ and let $\Lambda$ be a nonempty subset of $\Lambda_{\Sigma}$. Then, for every $\alpha\in\R$ and for every $u\in C^2_0(C_\Sigma; \Lambda)$
\begin{equation}\label{Sch}
\begin{split}
&\int_{C_\Sigma}|x|^\alpha|\Delta u|^2\,dx\ge\\
&\mu_{4-\alpha}(C_\Sigma;\Lambda)  \left[\int_{C_\Sigma}|x|^{\alpha-2}|\nabla u|^2\,dx+(N-2)(2-\alpha)\int_{C_\Sigma}|x|^{\alpha-4}|u|^2\right]\,dx.
\end{split}
\end{equation}
Furthermore, for $\alpha\in \R$ fixed, the constant $\mu_{4-\alpha}(C_\Sigma;\Lambda)$ cannot be replaced by a larger one. 
\end{Theorem}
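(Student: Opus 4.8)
The plan is to deduce \eqref{Sch} directly from Theorem \ref{HR-cone} by transplanting the inequality through the Kelvin transform $u\mapsto\hat u$ and exploiting that, as already observed, this transform is a bijection of $C^2_0(C_\Sigma;\Lambda)$ onto itself. The guiding observation is that the identities \eqref{-alpha1}--\eqref{-alpha2} convert the plain weighted Hardy-Rellich quotient at the \emph{reflected} parameter $4-\alpha$ into exactly the two sides of \eqref{Sch}; consequently \eqref{Sch} is not merely implied by, but equivalent to, the Hardy-Rellich inequality at $4-\alpha$, which is what will also give optimality for free.

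First I would fix $u\in C^2_0(C_\Sigma;\Lambda)$, set $v=\hat u$ (so that $\hat v=u$, the Kelvin transform being involutive), and apply Theorem \ref{HR-cone} with $\alpha$ replaced by $4-\alpha$ to the admissible function $v$:
\begin{equation*}
\int_{C_\Sigma}|x|^{4-\alpha}|\Delta v|^2\,dx\ge\mu_{4-\alpha}(C_\Sigma;\Lambda)\int_{C_\Sigma}|x|^{2-\alpha}|\nabla v|^2\,dx.
\end{equation*}
I would then rewrite both members in terms of $u=\hat v$, using \eqref{-alpha1}--\eqref{-alpha2} with $v$ in the role of $u$. The second identity in \eqref{-alpha1} turns the left-hand side into $\int_{C_\Sigma}|x|^{\alpha}|\Delta u|^2\,dx$. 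For the right-hand side, \eqref{-alpha2} gives $\int_{C_\Sigma}|x|^{\alpha-2}|\nabla u|^2\,dx=\int_{C_\Sigma}|x|^{2-\alpha}|\nabla v|^2\,dx+(N-2)(\alpha-2)\int_{C_\Sigma}|x|^{-\alpha}|v|^2\,dx$, while the first identity in \eqref{-alpha1} reads $\int_{C_\Sigma}|x|^{\alpha-4}|u|^2\,dx=\int_{C_\Sigma}|x|^{-\alpha}|v|^2\,dx$. Adding $(N-2)(2-\alpha)$ times the latter to the former, the two cross terms cancel because $(N-2)(\alpha-2)+(N-2)(2-\alpha)=0$, and one is left precisely with $\int_{C_\Sigma}|x|^{2-\alpha}|\nabla v|^2\,dx$, i.e.\ with the bracket appearing in \eqref{Sch}. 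Substituting these two rewritings yields \eqref{Sch}.

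For the optimality statement I would read the same computation backwards. Suppose \eqref{Sch} held with some constant $C$ in place of $\mu_{4-\alpha}(C_\Sigma;\Lambda)$, for all $u\in C^2_0(C_\Sigma;\Lambda)$. Writing it for $u=\hat v$ with arbitrary $v\in C^2_0(C_\Sigma;\Lambda)$ and using the \emph{equalities} \eqref{-alpha1}--\eqref{-alpha2} exactly as above (the same cancellation of the cross terms collapsing the bracket to $\int_{C_\Sigma}|x|^{2-\alpha}|\nabla v|^2\,dx$), the assumed inequality becomes the Hardy-Rellich inequality at parameter $4-\alpha$ with constant $C$, valid for every $v\in C^2_0(C_\Sigma;\Lambda)$. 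By the sharpness of $\mu_{4-\alpha}(C_\Sigma;\Lambda)$ established in Theorem \ref{HR-cone}, this forces $C\le\mu_{4-\alpha}(C_\Sigma;\Lambda)$, so the constant in \eqref{Sch} cannot be enlarged.

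The computations are entirely routine once the reflection $\alpha\mapsto 4-\alpha$ is carried out consistently, so I do not expect a genuine obstacle; the only points requiring care are the bookkeeping of this parameter substitution inside \eqref{-alpha1}--\eqref{-alpha2} and the sign cancellation of the two cross terms, which is exactly the structural reason for the weight $(N-2)(2-\alpha)$ in \eqref{Sch}. I would also note that the whole argument, including the optimality conclusion, survives in the degenerate case $\mu_{4-\alpha}(C_\Sigma;\Lambda)=0$: there \eqref{Sch} reduces to the trivial bound $\int_{C_\Sigma}|x|^\alpha|\Delta u|^2\,dx\ge 0$, and since the infimum defining $\mu_{4-\alpha}(C_\Sigma;\Lambda)$ then vanishes, no positive $C$ can satisfy the equivalent Hardy-Rellich inequality at $4-\alpha$.
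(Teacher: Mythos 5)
Your proposal is correct and follows exactly the route the paper intends: the theorem is stated immediately after the Kelvin-transform remark precisely because it is obtained by applying Theorem \ref{HR-cone} at the reflected parameter $4-\alpha$ and rewriting both sides via \eqref{-alpha1}--\eqref{-alpha2}, with the cross terms cancelling as you compute. Your optimality argument (reading the same equalities backwards to reduce any constant $C$ in \eqref{Sch} to a constant in the Hardy--Rellich inequality at $4-\alpha$, then invoking the sharpness of $\mu_{4-\alpha}(C_\Sigma;\Lambda)$) is likewise the intended one, and you have supplied the details the paper leaves implicit.
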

We point out that if $\alpha\le 2$ then $\mu_{4-\alpha}(C_\Sigma; \Lambda)\le\mu_\alpha(C_\Sigma; \Lambda)$, that is, a positive extra term involving the weighted $L^2$-norm can be added on the right hand side but the constant in front of the term with the gradient does not reach $\mu_\alpha(C_\Sigma; \Lambda)$.

\begin{Remark}[] 
%\label{Schmincke}
Let us consider the case $\Sigma = S^{N-1}$, where $C_\Sigma = \mathbb{R}^N \setminus \{0\}$. Furthermore, assume that $4-N \leq \alpha \leq \frac{N+4}{3}$. Then $\mu_{4-\alpha}(\mathbb{R}^N \setminus \{0\}) = \left(\frac{N+4-\alpha}{2}\right)^2$ (see Remark \ref{R:principal}). By using \eqref{Sch} with $\Lambda = \Lambda_{\Sigma}$, it follows that for every $u\in C^2_c(\R^N\setminus\{0\})$
\begin{equation*}
\begin{split}
&\int_{\R^{N}}|x|^\alpha|\Delta u|^2\,dx\ge\\
& \left(\frac{N+\alpha-4}{2} \right)^2 \left[\int_{\R^{N}}|x|^{\alpha-2}|\nabla u|^2\,dx+(N-2)(2-\alpha)\int_{\R^{N}}|x|^{\alpha-4}|u|^2\,dx\right].
\end{split}
\end{equation*}
This is a particular case of the Schmincke's inequality proved in \cite[Corollary 6]{Be} and in \cite[Theorems 2.7]{GPPS}. Theorem \ref{T:Schmincke-type} shows its optimality in the specified sense. 
% for every $u\in C^2_c(\R^N_0)$. The above inequality is a particular case of the Schmincke's inequality given in \cite[formula (1.7)]{GPPS} \rosso (per $s=- \left(\frac{N+\alpha-4}{2} \right)^2$)- Theorems 2.10 \blu(non trovo il teorema 2.10, forse \`e il teorema 2.7?) \rosso and 3.16\nero, see also \cite[Corollary 6]{Be}.  \rosso da capire il senso di queste disuguaglianze, mi pare non abbiano costanti sharp, in generale...forse bisogna citare il lavoro originale, da vedere \nero 
\end{Remark}

 \subsection{Some related optimal inequalities} 
 
 According to Theorem \ref{HR-cone}, the optimal constant $\mu_\alpha(C_\Sigma;\Lambda)$ in the Hardy-Rellich inequality on cones depends only on the parameter $\alpha$, the domain $\Sigma$ in $S^{N-1}$, and the subset $\Lambda\subseteq \Lambda_\Sigma$. This observation is further highlighted by Theorem \ref{3inf} below. Recalling \eqref{QL}, we define:

\begin{gather*} 
%\label{Z}
\mu_\alpha(Z_\Sigma;\Lambda):=\inf_{\scriptstyle w\in C^2_0(Z_\Sigma);\Lambda)\atop \scriptstyle w\ne 0}\frac{\displaystyle\int_{Z_\Sigma}|L_\alpha w|^2\,dt\,d\sigma}{\displaystyle\int_{Z_\Sigma}Q_\alpha(w)\,dt\,d\sigma}\,;\,\,
\mu_\alpha(\Sigma;\Lambda):=\inf_{\scriptstyle \varphi\in C^2_0(\Sigma;\Lambda)\atop \scriptstyle \varphi\ne 0}\frac{\displaystyle\int_{\Sigma}|L_\alpha \varphi|^2\,d\sigma}{\displaystyle\int_{\Sigma}Q_\alpha(\varphi)\,d\sigma}\,.
\end{gather*}
where $C^2_0(Z_\Sigma);\Lambda)$ and $ C^2_0(\Sigma;\Lambda)$ are defined as the subsets of $C^2_0(Z_\Sigma)$ and $ C^2_0(\Sigma)$ (see \eqref{C2Z}), respectively, of functions satisfying the orthogonality condition \eqref{ortogonaleZ} (with no dependence on $t$ in the second case).
The relationship between the above constants and $\mu_\alpha(C_\Sigma;\Lambda)$ is given by the following statement.
\begin{Theorem}\label{3inf}
Let $\Sigma$ be a $C^2$ domain in $S^{N-1}$ with $N\ge 2$ and let $\Lambda$ be a nonempty subset of $\Lambda_{\Sigma}$. For every $\alpha \in \R$ there holds
\begin{equation} \label{crucial=}
\mu_\alpha(Z_\Sigma;\Lambda)=\mu_\alpha(\Sigma;\Lambda)=\min_{\lambda\in \Lambda}\frac{\left(\gamma_\alpha+\lambda\right)^2}{\widehat\gamma_\alpha^2+\lambda}.
\end{equation}
Moreover the infimum in $\mu_\alpha(\Sigma;\Lambda)$, if considered in the space of test functions in $H^1_0(\Sigma)\cap H^2(\Sigma)$ satisfying the orthogonality condition \eqref{ortogonaleZ}, is attained by eigenfunctions corresponding to the eigenvalue minimizing the right hand side in \eqref{crucial=}.
\end{Theorem}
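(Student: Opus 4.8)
The plan is to reduce both infima to the scalar minimization on the right of \eqref{crucial=} by diagonalizing, with respect to a Hilbertian basis $\{\varphi_j\}$ of $H^1_0(\Sigma)\cap H^2(\Sigma)$ of eigenfunctions of $-\Delta_\sigma$, the two quadratic forms entering each Rayleigh quotient. I would start with the purely spherical quotient $\mu_\alpha(\Sigma;\Lambda)$. Given $\varphi\in C^2_0(\Sigma;\Lambda)$, the orthogonality \eqref{ortogonaleZ} forces the expansion $\varphi=\sum c_j\varphi_j$ to involve only the modes with eigenvalue $\lambda_j\in\Lambda$. Using $L_\alpha\varphi_j=(\lambda_j+\gamma_\alpha)\varphi_j$ together with $\int_\Sigma|\nabla_\sigma\varphi_j|^2\,d\sigma=\lambda_j\int_\Sigma|\varphi_j|^2\,d\sigma$, and normalizing $\|\varphi_j\|_{L^2(\Sigma)}=1$, both forms diagonalize and
\[
R(\varphi):=\frac{\displaystyle\int_\Sigma|L_\alpha\varphi|^2\,d\sigma}{\displaystyle\int_\Sigma Q_\alpha(\varphi)\,d\sigma}=\frac{\displaystyle\sum_{\lambda_j\in\Lambda}(\lambda_j+\gamma_\alpha)^2|c_j|^2}{\displaystyle\sum_{\lambda_j\in\Lambda}(\lambda_j+\widehat\gamma_\alpha^2)|c_j|^2}.
\]
This displays the quotient as a convex combination of the values $r_\alpha(\lambda_j):=(\gamma_\alpha+\lambda_j)^2/(\widehat\gamma_\alpha^2+\lambda_j)$ with nonnegative weights $(\lambda_j+\widehat\gamma_\alpha^2)|c_j|^2$; hence $R(\varphi)\ge\min_{\lambda\in\Lambda}r_\alpha(\lambda)$, with equality exactly when $\varphi$ is a pure mode at a minimizing eigenvalue $\overline\lambda$ (the minimum over the discrete set $\Lambda$ being attained since $r_\alpha(\lambda)\to+\infty$ as $\lambda\to\infty$). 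Choosing $\varphi\in H_{\overline\lambda}$ yields simultaneously $\mu_\alpha(\Sigma;\Lambda)=\min_{\lambda\in\Lambda}r_\alpha(\lambda)$ and the attainment assertion in $H^1_0(\Sigma)\cap H^2(\Sigma)$.

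Next I would identify $\mu_\alpha(Z_\Sigma;\Lambda)$ with $\mu_\alpha(\Sigma;\Lambda)$. The key structural fact is that neither $L_\alpha$ nor $Q_\alpha$ contains $\partial_t$, so the cylindrical quotient fibers over the $t$-variable. For the lower bound, fix $w\in C^2_0(Z_\Sigma;\Lambda)$: for each $t$ the slice $w(t,\cdot)$ belongs to $C^2_0(\Sigma;\Lambda)$ — it vanishes on $\partial\Sigma$ because $w|_{\partial Z_\Sigma}=0$ and inherits the orthogonality directly from \eqref{ortogonaleZ} — so the spherical inequality gives $\int_\Sigma|L_\alpha w(t,\cdot)|^2\,d\sigma\ge\mu_\alpha(\Sigma;\Lambda)\int_\Sigma Q_\alpha(w(t,\cdot))\,d\sigma$; integrating in $t$ by Fubini yields $\mu_\alpha(Z_\Sigma;\Lambda)\ge\mu_\alpha(\Sigma;\Lambda)$. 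For the reverse bound I would test with separable functions $w(t,\omega)=y(t)\varphi(\omega)$, where $y\in C^2_c(\R)$ and $\varphi\in C^2_0(\Sigma;\Lambda)$ is almost optimal for $\mu_\alpha(\Sigma;\Lambda)$: both integrals factor as $\int_\R|y|^2\,dt$ times the corresponding spherical integrals, so the cylindrical quotient equals $R(\varphi)$ and can be pushed arbitrarily close to $\mu_\alpha(\Sigma;\Lambda)$. Together with the first step this proves \eqref{crucial=}.

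The one point demanding care — and the main obstacle — is the degenerate configuration $\widehat\gamma_\alpha=0$ with $0\in\Lambda$, i.e. $\alpha=4-N$ and $\Sigma=S^{N-1}$, so that $0$ is an eigenvalue. There the constant zero mode makes $Q_\alpha$ vanish identically: its weight in the convex-combination identity is $0$ and $r_\alpha(0)$ is a $0/0$ indeterminacy. Unlike the full-cone quotient of Theorem \ref{HR-cone}, where the terms $|w_t|^2$ and $2\overline\gamma_\alpha|w_t|^2$ regularize the denominator and produce the value $(N-\alpha)^2/4$, here no such term is present, so the zero mode drops out of both the spherical and the cylindrical quotient and the minimum in \eqref{crucial=} is to be read over the remaining eigenvalues. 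I would record this explicitly and verify its consistency across the two quantities, observing that for a general $\varphi$ the higher modes keep $\int_\Sigma Q_\alpha(\varphi)\,d\sigma>0$, so the constant component never enters the quotient. Apart from this case, all computations are of the routine diagonalization type already performed in the proof of Theorem \ref{HR-cone}.
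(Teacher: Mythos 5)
Your proof is correct and follows essentially the same route as the paper's: the slicing argument in $t$ for $\mu_\alpha(Z_\Sigma;\Lambda)\ge\mu_\alpha(\Sigma;\Lambda)$, separable test functions $y(t)\varphi(\omega)$ for the reverse inequality, and diagonalization of both quadratic forms in an eigenfunction basis to identify $\mu_\alpha(\Sigma;\Lambda)$ with the scalar minimum and to obtain attainment. Your closing observation about the degenerate configuration $\widehat\gamma_\alpha=0$ with $0\in\Lambda$ is a valid point that the paper's proof does not address explicitly: the agreement $\gamma_\alpha^2/\widehat\gamma_\alpha^2=(N-\alpha)^2/4$ is stated only for Theorem \ref{HR-cone}, where the extra terms in $|w_t|^2$ produce that value, whereas in $\mu_\alpha(\Sigma;\Lambda)$ and $\mu_\alpha(Z_\Sigma;\Lambda)$ the zero mode annihilates both forms and must simply be excluded from the minimum, exactly as you say.
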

By combining Theorem \ref{3inf} with Theorem \ref{HR-cone}, we infer that
\begin{equation}
\label{mu-alpha-Sigma}
\mu_\alpha(C_\Sigma;\Lambda)=\mu_\alpha(\Sigma;\Lambda)\,. 
\end{equation}
Hence, for the remainder of this paper, we will simply refer to $\mu_\alpha(\Sigma;\Lambda)$.
\begin{proof}
 For every $w\in C^2_0(Z_\Sigma;\Lambda)$ and for every $t\in\R$ one has that $w(t,\cdot)\in C^2_0(\Sigma;\Lambda)$ and then
\[
\int_\Sigma|L_\alpha w(t,\omega)|^2\,d\sigma(\omega)\ge \mu_\alpha(\Sigma;\Lambda)\int_\Sigma Q_\alpha( w(t,\omega))^2\,d\sigma(\omega).
\]
Integrating over $t\in\R$ we obtain that $\mu_\alpha(Z_\Sigma;\Lambda)\ge\mu_\alpha(\Sigma;\Lambda)$. Let us prove the opposite inequality. Let $\psi\in C^2_c(\R)\setminus\{0\}$. For every $\varphi\in C^2_0(\Sigma;\Lambda)\setminus\{0\}$ set $w(t,\sigma)=\psi(t)\varphi(\sigma)$. Then $w\in C^2_0(Z_\Sigma;\Lambda)$ and 
\[
\mu_\alpha(Z_\Sigma;\Lambda)\le \frac{\displaystyle\int_{Z_\Sigma}|L_\alpha w|^2\,dt\,d\sigma}{\displaystyle\int_{Z_\Sigma}Q_\alpha(w)\,dt\,d\sigma}=\frac{\displaystyle\int_{\Sigma}|L_\alpha \varphi|^2\,d\sigma}{\displaystyle\int_{\Sigma}Q_\alpha(\varphi)\,d\sigma}\,.
\]
Hence, $\mu_\alpha(Z_\Sigma;\Lambda)\le \mu_\alpha(\Sigma;\Lambda)$. In conclusion, $\mu_\alpha(Z_\Sigma;\Lambda)=\mu_\alpha(\Sigma;\Lambda)$. 

Finally, we show that $\mu_\alpha(\Sigma;\Lambda)=\min_{\lambda\in \Lambda}\frac{\left(\gamma_\alpha+\lambda\right)^2}{\widehat\gamma_\alpha^2+\lambda}$ and the infimum in the definition of $\mu_\alpha(\Sigma;\Lambda)$  is attained by eigenfunctions corresponding to the eigenvalue minimizing the right hand side in \eqref{crucial=}. To this aim, let $\lambda\in\Lambda$ and let $\varphi\in H^1_0(\Sigma)$ be a corresponding eigenvector, then
\[
\int_\Sigma|L_\alpha\varphi|^2\,d\sigma=\int_\Sigma\left|(\lambda+\gamma_\alpha)\varphi\right|^2\,d\sigma
\text{~~and~~}
\int_\Sigma Q_\alpha(\varphi)\,d\sigma=\int_\Sigma\left(\lambda+\widehat\gamma_\alpha^2\right)|\varphi|^2\,d\sigma.
\]
Hence,
\begin{equation}
\label{inequality}
\mu_\alpha(\Sigma;\Lambda)\le\frac{\left(\gamma_\alpha+\lambda\right)^2}{\widehat\gamma_\alpha^2+\lambda}.
\end{equation}
Using the same notation introduced in the proof of Theorem \ref{HR-cone}, for every $\varphi\in H^1_0(\Sigma)\cap H^2(\Sigma)$ satisfying the orthogonality condition \eqref{ortogonaleZ}, we have that
$\varphi=\sum_{j\in I_{\Lambda}}c_j\varphi_j$ where $c_j=\int_\Sigma\varphi_j\varphi\,d\sigma\quad(j\in\mathbb{N})$. Therefore, for every $k\in\mathbb{N}$ one has
\[\begin{split}
\int_{\Sigma}\bigg|L_\alpha \bigg(\sum_{\scriptstyle j\in I_{\Lambda} \atop \scriptstyle j\leq k} c_j\varphi_j\bigg)\bigg|^2\,d\sigma&=\sum_{\scriptstyle j\in I_{\Lambda} \atop \scriptstyle j\leq k}|c_j|^2 \left(\lambda_j+\gamma_\alpha\right)^2\int_{\Sigma}|\varphi_j|^2\,d\sigma\\
& \ge\min_{\lambda\in\Lambda }\frac{\left(\lambda+\gamma_\alpha\right)^2}{\lambda+\widehat\gamma_\alpha^2}\sum_{\scriptstyle j\in I_{\Lambda} \atop \scriptstyle j\leq k}|c_j|^2\left(\lambda_j+\widehat\gamma_\alpha^2\right)\int_{\Sigma}|\varphi_j|^2\,d\sigma
\end{split}
\]
and
\[
\int_{\Sigma}Q_\alpha\bigg(\sum_{\scriptstyle j\in I_{\Lambda} \atop \scriptstyle j\leq k} c_j\varphi_j\bigg)\,d\sigma%=\sum_{j=1}^k|c_j|^2\int_{Z_\Sigma}Q_\alpha(\varphi_j)\,d\sigma
=\sum_{\scriptstyle j\in I_{\Lambda} \atop \scriptstyle j\leq k}|c_j|^2\left(\lambda_j+\widehat\gamma_\alpha^2\right)\int_{\Sigma}|\varphi_j|^2\,d\sigma.
\]
%Then, for every $k\in\mathbb{N}$ one has
%\[
%\frac{\displaystyle\int_{\Sigma}\bigg|L_\alpha \bigg(\sum_{j=1}^kc_j\varphi_j\bigg)\bigg|^2\,d\sigma}{\displaystyle\int_{\Sigma}Q_\alpha\bigg(\sum_{j=1}^kc_j\varphi_j\bigg)\,d\sigma}\ge \min_{\lambda\in\Lambda_\Sigma}\frac{\left(\lambda+\gamma_\alpha\right)^2}{\lambda+\widehat\gamma_\alpha^2}.
%\]
Then, passing to the limit as $k\to\infty$ one obtains
\[
\frac{\displaystyle\int_{\Sigma}|L_\alpha\varphi|^2\,d\sigma}{\displaystyle\int_{\Sigma}Q_\alpha(\varphi)\,d\sigma}\ge \min_{\lambda\in\Lambda}\frac{\left(\lambda+\gamma_\alpha\right)^2}{\lambda+\widehat\gamma_\alpha^2}.
\]
This holds true for every $\varphi\in H^1_0(\Sigma)\cap H^2(\Sigma)$ satisfying the orthogonality condition \eqref{ortogonaleZ}, and thus, together with \eqref{inequality}, concludes the proof. 
\end{proof}

\section{The Dirichlet case}\label{S3}

Let $\Sigma$ be a \emph{proper} $C^2$ subdomain of $S^{N-1}$. In this context, it is meaningful to study the Hardy-Rellich inequality within the two distinct spaces defined in \eqref{C20}: either considering functions in the space $C^2_0(C_\Sigma)$ (the \emph{Navier case}) or in the \emph{proper} space $C^2_c(C_\Sigma)$ (the \emph{Dirichlet case}). While the Navier case has already been addressed in the previous section, here we focus on the Dirichlet case.

Let $C^2_0(C_\Sigma;\Lambda)$ be as defined in \eqref{ortogonale}, we set $C^2_c(C_\Sigma;\Lambda):=C^2_0(C_\Sigma;\Lambda) \cap C^2_c(C_\Sigma)$ and we define: 
\begin{equation*}
%\label{mu-0-alpha}
\mu^0_\alpha(C_\Sigma;
\Lambda):=\inf_{\scriptstyle u\in C^2_c(C_\Sigma;
\Lambda)\atop \scriptstyle u\ne 0}\frac{\displaystyle\int_{C_\Sigma}|x|^{\alpha}|\Delta u|^2\,dx}{\displaystyle\int_{C_\Sigma}|x|^{\alpha-2}|\nabla u|^2\,dx}\quad \text{and} \quad \mu_\alpha^0(C_\Sigma):=\mu_\alpha^0(C_\Sigma;\Lambda_\Sigma)\,.
\end{equation*}
We prove:

\begin{Theorem}\label{dir>}
If  $\Sigma$ is a $C^2$ domain of $S^{N-1}$ and $\Sigma\ne S^{N-1}$, then $\mu^0_\alpha(C_\Sigma;\Lambda)\ge  \mu_\alpha(C_\Sigma;\Lambda)$ and $\mu^0_\alpha(C_\Sigma;\Lambda)>0$ for every $\alpha\in\R$ and for every nonempty $\Lambda\subseteq\Lambda_\Sigma$. 
\end{Theorem}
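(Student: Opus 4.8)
The estimate $\mu^0_\alpha(C_\Sigma;\Lambda)\ge\mu_\alpha(C_\Sigma;\Lambda)$ is immediate: a function with compact support in the open cone $C_\Sigma$ vanishes near $\partial C_\Sigma$ and near $0$ and $\infty$, so $C^2_c(C_\Sigma;\Lambda)\subseteq C^2_0(C_\Sigma;\Lambda)$ and the infimum over the smaller class can only be larger. The real content is the strict positivity. By Remark \ref{RN}(i) there is nothing to prove unless $-\gamma_\alpha\in\Lambda$, in which case $\mu_\alpha(C_\Sigma;\Lambda)=0$ and the first inequality is useless; this \emph{resonant} situation is the genuine difficulty. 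Moreover, since imposing more orthogonality conditions shrinks the admissible class, $C^2_c(C_\Sigma;\Lambda)\subseteq C^2_c(C_\Sigma)$ gives $\mu^0_\alpha(C_\Sigma;\Lambda)\ge\mu^0_\alpha(C_\Sigma)$, so it suffices to treat $\Lambda=\Lambda_\Sigma$. The plan is to transfer the problem to the cylinder and to reduce positivity to a spectral gap on the cross-section $\Sigma$, exploiting that passing from the Navier to the Dirichlet class forces functions to vanish near $\partial\Sigma$.

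Concretely, I would first record that the Emden-Fowler transform $T$ maps $C^2_c(C_\Sigma)$ onto $C^2_c(Z_\Sigma)$, the compact support in $C_\Sigma$ becoming compact support in $Z_\Sigma$, so that \eqref{first-derivative-w}--\eqref{second-derivative-w} turn the quotient into $F(w)/G(w)$. The structural gain is that every slice $w(t,\cdot)$ now has compact support in $\Sigma$, hence lies in $C^2_c(\Sigma)$; this is exactly what fails for the Navier extremizing sequence $y(\varepsilon t)\varphi(\omega)$, whose slices are Dirichlet eigenfunctions of $-\Delta_\sigma$ and do \emph{not} vanish near $\partial\Sigma$. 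Introduce the cross-sectional constant
\[
\mu^0_\alpha(\Sigma):=\inf_{\scriptstyle\varphi\in C^2_c(\Sigma)\atop\scriptstyle\varphi\ne 0}\frac{\int_\Sigma|L_\alpha\varphi|^2\,d\sigma}{\int_\Sigma Q_\alpha(\varphi)\,d\sigma}.
\]
Slicing the $|L_\alpha w|^2$ term in $F(w)$ and integrating in $t$ gives $\int_{Z_\Sigma}|L_\alpha w|^2\ge\mu^0_\alpha(\Sigma)\int_{Z_\Sigma}(|\nabla_\sigma w|^2+\widehat\gamma_\alpha^2|w|^2)$; discarding the nonnegative terms $|w_{tt}|^2$ and $2\overline\gamma_\alpha|w_t|^2$ and using the Poincaré inequality $\int_\Sigma|\nabla_\sigma w_t|^2\ge\lambda_\Sigma\int_\Sigma|w_t|^2$ (legitimate since $w_t(t,\cdot)$ also vanishes near $\partial\Sigma$) to absorb $\int_{Z_\Sigma}|w_t|^2$, I obtain $F(w)\ge\min\{\mu^0_\alpha(\Sigma),\,2\lambda_\Sigma\}\,G(w)$. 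Since $\lambda_\Sigma>0$ for a proper cone, everything reduces to proving $\mu^0_\alpha(\Sigma)>0$.

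For this key lemma I would argue by contradiction and compactness. Take $\varphi_n\in C^2_c(\Sigma)$ with $\int_\Sigma Q_\alpha(\varphi_n)=1$ and $L_\alpha\varphi_n\to 0$ in $L^2(\Sigma)$; the normalization bounds $\{\varphi_n\}$ in $H^1_0(\Sigma)$, so up to a subsequence $\varphi_n\rightharpoonup\varphi$ in $H^1_0$ and $\varphi_n\to\varphi$ in $L^2$ by Rellich. From $-\Delta_\sigma\varphi_n=L_\alpha\varphi_n-\gamma_\alpha\varphi_n$ the right-hand side is Cauchy in $L^2$, so the elliptic estimate for $H^2\cap H^1_0(\Sigma)$ on the $C^2$ domain $\Sigma$ upgrades the convergence to $\varphi_n\to\varphi$ in $H^2$, whence $\varphi$ lies in the $H^2$-closure of $C^2_c(\Sigma)$, namely $H^2_0(\Sigma)$, and $L_\alpha\varphi=0$. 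Thus $\varphi$ solves $-\Delta_\sigma\varphi=-\gamma_\alpha\varphi$ while having \emph{both} vanishing trace and vanishing normal derivative on $\partial\Sigma$; extending $\varphi$ by zero across the boundary and invoking unique continuation for the second-order elliptic operator $-\Delta_\sigma+\gamma_\alpha$ forces $\varphi\equiv 0$. But then $1=\int_\Sigma Q_\alpha(\varphi_n)\to\int_\Sigma Q_\alpha(\varphi)=0$, a contradiction, so $\mu^0_\alpha(\Sigma)>0$.

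I expect the unique continuation step to be the main obstacle, and it is precisely there that the hypothesis $\Sigma\ne S^{N-1}$ is used: properness guarantees $\partial\Sigma\ne\emptyset$, so that ``vanishing Cauchy data on the boundary'' is a nonvacuous condition killing the would-be resonant eigenfunction, and it simultaneously guarantees $\lambda_\Sigma>0$. The remaining points — that $T$ preserves compact support, that the slicing and the Poincaré absorption are valid, and that restricting $\Lambda$ only raises the infimum so that the unconstrained $\mu^0_\alpha(\Sigma)$ suffices — are routine bookkeeping.
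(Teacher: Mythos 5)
Your proof is correct and follows essentially the same route as the paper's: Emden--Fowler transfer to the cylinder, the Poincar\'e inequality on $\Sigma$ applied to $w_t$ to absorb the extra $\int|w_t|^2$ term (using $\lambda_\Sigma>0$ for proper $\Sigma$), reduction to the cross-sectional infimum over compactly supported functions on $\Sigma$, and compactness plus unique continuation in $H^2_0(\Sigma)$ to show that this infimum is positive. The only cosmetic differences are that you reduce to $\Lambda=\Lambda_\Sigma$ at the outset and derive the explicit lower bound $\mu^0_\alpha(C_\Sigma)\ge\min\{\mu^0_\alpha(\Sigma),\,2\lambda_\Sigma\}$ directly, whereas the paper keeps $\Lambda$ throughout and argues by contradiction from a minimizing sequence.
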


\begin{proof} The large inequality is trivial, as $C^2_c(C_\Sigma;
\Lambda)\subset C^2_0(C_\Sigma;
\Lambda)$. Let us prove that  $\mu^0_\alpha(C_\Sigma;
\Lambda)>0$. 
To this aim, let us introduce the spaces $C_c^2(Z_\Sigma;\Lambda):=C_0^2(Z_\Sigma;\Lambda)\cap C_c^2(Z_\Sigma)$ and $C_c^2(\Sigma;\Lambda):=C_0^2(\Sigma;\Lambda)\cap C_c^2(\Sigma)$, and the corresponding values
\begin{equation}\label{mualpha0SigmaZ}
\mu_\alpha^0(Z_\Sigma;\Lambda):=\inf_{\scriptstyle w\in C^2_c(Z_\Sigma;\Lambda)\atop \scriptstyle w\ne 0}\frac{\displaystyle\int_{Z_\Sigma}|L_\alpha w|^2\,dt\,d\sigma}{\displaystyle\int_{Z_\Sigma}Q_\alpha(w)\,dt\,d\sigma}\,,~~~
\mu_\alpha^0(\Sigma;\Lambda):=\inf_{\scriptstyle \varphi\in C^2_c(\Sigma;\Lambda)\atop \scriptstyle \varphi\ne 0}\frac{\displaystyle\int_{\Sigma}|L_\alpha \varphi|^2\,d\sigma}{\displaystyle\int_{\Sigma}Q_\alpha(\varphi)\,d\sigma}\,.
\end{equation}
Arguing as in the proof of Theorem \ref{3inf}, we can show that
\begin{equation} \label{crucial-bis}
\mu_\alpha^0(Z_\Sigma;\Lambda)=\mu_\alpha^0(\Sigma;\Lambda).
\end{equation}
Assume that $\mu^0_\alpha(C_\Sigma;
\Lambda)=0$. Then there exists a sequence $(u_n)\subset C^2_c(C_\Sigma;
\Lambda)$ such that
\[
\int_{C_\Sigma}|x|^\alpha|\Delta u_n|^2\,dx\to 0 \quad \text{as } n\rightarrow +\infty\quad\text{and}\quad\int_{C_\Sigma}|x|^{\alpha-2}|\nabla u_n|^2\,dx=1~\forall n\in\mathbb{N}.
\]
Let $w_n=T(u_n)$ be the Emden-Fowler transform of $u_n$, according to \eqref{Emden-Fowler}. Then $w_n
\in C^2_c(Z_\Sigma;\Lambda)$ and, by \eqref{first-derivative-w}--\eqref{second-derivative-w}, 
\begin{equation}\label{wnnnn}\begin{split}
&\int_{Z_\Sigma}|L_\alpha w_n|^2\,dt\,d\sigma\to 0\,,\quad\int_{Z_\Sigma}|\nabla_\sigma(w_n)_t|^2\,dt\,d\sigma\to 0\,,\\
&\qquad\qquad\int_{Z_\Sigma}\left[Q_\alpha(w_n)+(w_n)_t|^2\right]\,dt\,d\sigma\to 1
\end{split}
\end{equation}
as $n\to\infty$. Since $(w_n)_t(\cdot,\omega)\in H^1_0(\Sigma)$ for every $\omega\in\Sigma$, by the Poincar\'e inequality
\[
\int_{Z_\Sigma}|\nabla_\sigma(w_n)_t|^2\,dt\,d\sigma\ge\min\Lambda\int_{Z_\Sigma}\left|\left(w_n\right)_t\right|^2\,dt\,d\sigma
\]
with $\min\Lambda>0$ since $\Sigma$ is a proper subdomain of $S^{N-1}$. Then $(w_n)_t\to 0$ in $L^2(Z_\Sigma)$ and consequently, using \eqref{wnnnn}, we obtain that $\mu_\alpha^0(Z_\Sigma;\Lambda)=0$. In view of \eqref{crucial-bis}, also $\mu_\alpha^0(\Sigma;\Lambda)=0$. In fact, 
\[
\mu_\alpha^0(\Sigma;\Lambda)\ge\inf_{\scriptstyle \varphi\in H^2_0(\Sigma)\atop \scriptstyle \varphi\ne 0}\frac{\displaystyle\int_{\Sigma}|L_\alpha \varphi|^2\,d\sigma}{\displaystyle\int_{\Sigma}Q_\alpha(\varphi)\,d\sigma}\,.
\]
Then also 
\begin{equation}\label{H20Sigma}
\inf_{\scriptstyle \varphi\in H^2_0(\Sigma)\atop \scriptstyle \varphi\ne 0}\frac{\displaystyle\int_{\Sigma}|L_\alpha \varphi|^2\,d\sigma}{\displaystyle\int_{\Sigma}Q_\alpha(\varphi)\,d\sigma}=0\,.
\end{equation}
By the compactness of the embedding of $H^2_0(\Sigma)$ into $H^1(\Sigma)$, the infimum in \eqref{H20Sigma} is attained by some $\overline\varphi\in H^2_0(\Sigma)$, $\overline{\varphi}\ne 0$. This means that $\overline{\varphi}$ solves $\Delta_\sigma\overline{\varphi}=\gamma_\alpha\overline{\varphi}$ in $\Sigma$, and $\overline{\varphi}\in H^2_0(\Sigma)$. If $-\gamma_{\alpha} \not \in \Lambda_{\Sigma}$, we immediately get a contradiction. If $-\gamma_{\alpha}  \in \Lambda_{\Sigma}$, then we obtain a contradiction with the unique continuation principle, because $\overline\varphi\in H^2_0(\Sigma)$ and $\Sigma$ is a domain of class $C^2$. 
\end{proof}

 \begin{Remark}(Navier case versus Dirichlet case)
 %\label{Dir-rem}
When $\Sigma \ne S^{N-1}$, by Theorem \ref{dir>}, the Hardy-Rellich inequality in the Dirichlet case holds true with a positive constant $\mu^0_\alpha(C_\Sigma;\Lambda)$, even when $-\gamma_\alpha$ is an eigenvalue of the Laplace-Beltrami operator in $H^1_0(\Sigma)$ and the optimal constant in the Navier case, $\mu_\alpha(C_\Sigma;\Lambda)$, vanishes, see Remark \ref{RN}-(i). Clearly, in such cases, 
\begin{equation}
\label{strictt}\mu^0_\alpha(C_\Sigma;\Lambda) > \mu_\alpha(C_\Sigma;\Lambda)\,.
\end{equation} 
We conjecture that \eqref{strictt} holds for all $\alpha \in \R$ and for every nonempty $\Lambda\subseteq\Lambda_\Sigma$. 
This issue was established in the Rellich case in \cite[Theorem 3.1]{CM}, by comparing the optimal constants of related infimum problems in $Z_\Sigma$, after applying the Emden transform. Unfortunately, this argument cannot be fully extended to the Hardy-Rellich case due to additional terms arising when performing the Emden transform.
Indeed, we expect that 
\begin{equation}
\label{equalities0}
\mu^0_\alpha(\Sigma;\Lambda) = \mu_\alpha^0(Z_\Sigma;\Lambda) = \mu^0_\alpha(C_\Sigma;\Lambda)
\end{equation}
where $\mu^0_\alpha(\Sigma;\Lambda)$ and $\mu_\alpha^0(Z_\Sigma;\Lambda)$ are defined in \eqref{mualpha0SigmaZ}. This would be enough to deduce \eqref{strictt}. Indeed, by the same arguments in the proof of \cite[Proposition 2.2]{CM}, one can show that $\mu_\alpha^0(\Sigma;\Lambda)>\mu_\alpha(\Sigma;\Lambda)$, and then, one could use \eqref{mu-alpha-Sigma}. Let us examine \eqref{equalities0}. Arguing as in the first part of the proof of Theorem \ref{3inf}, it is easy to show the first equality. %that $\mu^0_\alpha(\Sigma) = \mu_\alpha^0(Z_\Sigma)$. 
Moreover, if we set $w^\varepsilon(t,\sigma):=w(\varepsilon t,\sigma)$ for $\varepsilon>0$, with the notation introduced in the proof of Theorem \ref{HR-cone}, we have that
 \[\begin{split}
&\mu_\alpha^0(C_\Sigma;\Lambda)=\inf_{\scriptstyle w\in C^2_c(Z_\Sigma;\Lambda)\atop\scriptstyle w\ne 0}\frac{F(w)}{G(w)}=\inf_{\scriptstyle w\in C^2_c(Z_\Sigma;\Lambda)\atop\scriptstyle  w\ne 0,\,\varepsilon>0}\frac{F(w^\varepsilon)}{G(w^\varepsilon)}\le\inf_{\scriptstyle w\in C^2_c(Z_\Sigma;\Lambda)\atop\scriptstyle w\ne 0}\lim_{\varepsilon\to 0}\frac{F(w^\varepsilon)}{G(w^\varepsilon)}\\
&=\inf_{\scriptstyle w\in C^2_c(Z_\Sigma;\Lambda)\atop\scriptstyle w\ne 0}\lim_{\varepsilon\to 0}\frac{\displaystyle\int_{Z_\Sigma}\left(|L_\alpha w|^2+\varepsilon^4|w_{tt}|^2+2\varepsilon^2|\nabla_\sigma w_t|^2+2\varepsilon^2\overline\gamma_\alpha|w_t|^2\right)\,dt\,d\sigma}{\displaystyle\int_{Z_\Sigma}\left(Q_\alpha(w)+\varepsilon^2|w_t|^2\right)\,dt\,d\sigma}\,.
\end{split}
\]
Hence, $ \mu^0_\alpha(C_\Sigma;\Lambda) \leq \mu_\alpha^0(Z_\Sigma;\Lambda)$. However, proving that also the converse inequality holds appears to be a difficult task due to the additional term in the denominator involving $w_t$, which is absent in the Rellich case.
\end{Remark}

\section{Improved Hardy-Rellich inequalities on cone-like domains}\label{S5}

In this section we study Hardy-Rellich inequalities with remainder terms in domains of the form
\begin{equation}\label{Omega}
\Omega=\{x\in C_\Sigma~:~|x|<1\}\quad\text{or}\quad \Omega=\{x\in C_\Sigma~:~|x|>1\}
\end{equation}
where $C_\Sigma$ is a cone corresponding to a $C^2$ domain $\Sigma$ in $S^{N-1}$, with $N\ge 2$. The case $\Sigma=S^{N-1}$ is also allowed and for this choice we have
\begin{equation*}
%\label{palla bucata}
\Omega=\{x\in\R^N~:~0<|x|<1\}\quad\text{or}\quad \Omega=\{x\in \R^N~:~|x|>1\}.
\end{equation*}
We are going to obtain different kinds of inequalities, depending on the boundary conditions, often with optimal constants also for the remainder terms. 

The main tool is again the Emden-Fowler transform, defined by \eqref{Emden-Fowler}, which provides an isomorfism between spaces of functions in $C^2(\overline\Omega)$ satisfying some boundary conditions, and corresponding spaces of functions in $C^2(\overline{Z})$ where 
\[
Z=\R_+\times\Sigma\quad\text{or}\quad Z=\R_-\times\Sigma
\]
($\R_+=(0,\infty)$ and $\R_-=(-\infty,0)$), if $\Omega$ is the first or the second domain in \eqref{Omega}. Then, a suitable exploitation of 1-dimensional Hardy and Rellich inequalities:
\begin{equation}
\label{R1d}
\inf_{\scriptstyle\psi\in C^1_c(\R_+)\atop\scriptstyle \psi\ne 0}\frac{\displaystyle\int_0^\infty|\psi'|^2\,dt}{\displaystyle\int_0^\infty t^{-2}|\psi|^2\,dt}=\frac1{4} \quad \text{and}\quad \inf_{\scriptstyle\psi\in C^2_c(\R_+)\atop\scriptstyle \psi\ne 0}\frac{\displaystyle\int_0^\infty|\psi''|^2\,dt}{\displaystyle\int_0^\infty t^{-4}|\psi|^2\,dt}=\frac9{16}
\end{equation}
 allows to derive remainder terms involving Leray-type weights. 
 
For future use, we first note that if $u\colon\overline\Omega\to\R$ is a $C^2$ mapping vanishing on $\partial\Omega$ and with support far from $0$ in the first case of \eqref{Omega} or with compact support in the second case of \eqref{Omega}, then $w=T(u)\colon\overline{Z}\to\R$ is a $C^2$ mapping vanishing on $\partial Z$ and with compact support. Moreover, identities \eqref{w2}--\eqref{second-derivative-w} (with $\Omega$ and $Z$ instead of $C_\Sigma$ and $Z_\Sigma$, respectively) still hold, as well as the following ones:
\begin{gather}
\label{log-nabla}
\int_{\Omega}|x|^{\alpha-2}\left|\log|x|\right|^{-2}|\nabla u|^2\,dx=\int_{Z}t^{-2}\left(|\nabla_\sigma w|^2+|w_t+\widehat\gamma_\alpha w|^2\right)\,dt\,{\dsigma}
\\
\label{log-nabla-sigma}
\int_{\Omega}|x|^{\alpha-4}\left|\log|x|\right|^{-2}|\nabla_\sigma u|^2\,dx=\int_{Z}t^{-2}|\nabla_\sigma w|^2\,dt\,{\dsigma}\\
\label{log-u2}
\int_{\Omega}|x|^{\alpha-4}\left|\log|x|\right|^{-\beta}|u|^2\,dx=\int_{Z}t^{-\beta}|w|^2\,dt\,{\dsigma}
\end{gather}
where  $\alpha \in \R$ and $\beta=2$ or $\beta=4$.

\subsection{The Navier-Dirichlet case}

For $\Omega=\{x\in C_\Sigma~:~|x|<1\}$, let us introduce the space
\[
C^2_{ND}(\Omega):=\{u\in C^2(\overline\Omega)~:~u|_{\partial\Omega}=0,~u=0\text{ in neighborhoods of $0$ and $S^{N-1}\cap \overline\Omega$}\}.
\]
In the case $\Omega=\{x\in C_\Sigma~:~|x|>1\}$, we define
\[
C^2_{ND}(\Omega):=\{u\in C_c^2(\overline\Omega)~:~u|_{\partial\Omega}=0,~u=0\text{ in a neighborhood of $S^{N-1}\cap \overline\Omega$}\}.
\]
The boundary conditions expressed in the above definitions of $C^2_{ND}(\Omega)$ correspond to the so-called Navier-Dirichlet case for domains $\Omega$ of the form \eqref{Omega}. Finally, for every $\Lambda$ nonempty subset of $\Lambda_\Sigma$, we will denote by $C^2_{ND}(\Omega;\Lambda)$ the class of functions in $C^2_{ND}(\Omega)$ satisfying the orthogonality condition \eqref{ortogonale} for all $0<r<1$, respectively, for all $r>1$, in the two cases.

We show the following version of Hardy-Rellich inequality with remainder terms: 
 
\begin{Theorem}\label{ND-case}
Let $\alpha\in\R$ and let $\Omega$ be as in \eqref{Omega}. Furthermore, let $\Lambda$ be a nonempty subset of $\Lambda_\Sigma$.  Then, for every $u\in C^2_{ND}(\Omega;\Lambda)$ one has
\begin{equation}\label{ND}
\begin{split}
\int_{\Omega}   |x|^\alpha  |\Delta u|^2\,dx&\ge  \mu_\alpha(\Sigma;\Lambda)\int_{\Omega}|x|^{\alpha-2}|\nabla u|^2\,dx+\frac14\int_{\Omega}|x|^{\alpha-2}\left|\log|x|\right|^{-2}|\nabla u|^2\,dx\\
&\quad +\frac{\lambda_{\Lambda}(\alpha-2)^2}{4(\lambda_{\Lambda}+\widehat\gamma_\alpha^2)}\int_{\Omega}|x|^{\alpha-4}\left|\log|x|\right|^{-2}|u|^2\,dx
\end{split}
\end{equation}
where $\mu_\alpha(\Sigma;\Lambda)$ is given by \eqref{mu-alpha-Sigma},  $\lambda_{\Lambda}:=\min\Lambda$, and $\widehat\gamma_\alpha$ is defined in \eqref{ga}.
Moreover, if $A_0,A_1,A_2\in\R$ are such that
\begin{equation}\label{ND-A}
\begin{split}
\int_{\Omega}   |x|^\alpha  |\Delta u|^2\,dx&\ge  A_0\int_{\Omega}|x|^{\alpha-2}|\nabla u|^2\,dx +A_1\int_{\Omega}|x|^{\alpha-2}\left|\log|x|\right|^{-2}|\nabla u|^2\,dx\\
&\quad +A_2\int_{\Omega}|x|^{\alpha-4}\left|\log|x|\right|^{-2}|u|^2\,dx
\end{split}
\end{equation}
for every $u\in C^2_{ND}(\Omega;\Lambda)$, then 
\[
A_0\le\mu_\alpha(\Sigma;\Lambda)\,,\quad A_1\le \frac14\,,\text{\quad and\quad} A_2\le \frac{\lambda(\alpha-2)^2}{4(\lambda+\widehat\gamma_\alpha^2)}\,,
\] 
where $\lambda \in \Lambda$ is such that $\mu_\alpha(\Sigma;\Lambda)=\frac{(\lambda+\gamma_\alpha)^2}{\lambda+\widehat\gamma_\alpha^2}$. 
\end{Theorem}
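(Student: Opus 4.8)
The plan is to exploit the Emden-Fowler transform to reduce the inequality on $\Omega$ to a model problem on the half-cylinder $Z$, where the extra logarithmic weights become the pure power weights $t^{-2}$ and $t^{-4}$ appearing in the one-dimensional Hardy and Rellich inequalities \eqref{R1d}. Concretely, given $u\in C^2_{ND}(\Omega;\Lambda)$ I set $w=T(u)$; by the discussion preceding the theorem, $w\in C^2(\overline Z)$ vanishes on $\partial Z$ with compact support, and the identities \eqref{second-derivative-w}, \eqref{first-derivative-w}, \eqref{log-nabla} and \eqref{log-u2} translate all four integrals in \eqref{ND} into integrals on $Z$. Writing $F(w)$ for the transformed left-hand side and expanding $w$ in the eigenbasis $\{\varphi_j\}$ of $-\Delta_\sigma$ as in \eqref{proiez}, the orthogonality condition restricts the sum to $j\in I_\Lambda$, so it suffices to prove the inequality termwise for each $w_j=y_j(t)\varphi_j(\omega)$ with $\lambda_j\in\Lambda$.

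After separating variables, the key estimate becomes a purely one-dimensional statement for $y=y_j\in C^2_c(\R_+)$. Using \eqref{second-derivative-w} one has $F(w_j)=\int_{\R_+}\big(|y''|^2+2(\lambda_j+\overline\gamma_\alpha)|y'|^2+(\lambda_j+\gamma_\alpha)^2|y|^2\big)\,dt$ (times $\int_\Sigma|\varphi_j|^2$), while the three target terms on $Z$ involve $\int|y'|^2$, $\int t^{-2}|y'|^2$, $\int t^{-2}(y'+\widehat\gamma_\alpha y)^2$ and $\int t^{-4}|y|^2$. The first step is to peel off the leading Hardy-Rellich term: the factor $\tfrac{(\lambda_j+\gamma_\alpha)^2}{\lambda_j+\widehat\gamma_\alpha^2}$ dominates $\mu_\alpha(\Sigma;\Lambda)$ by Theorem \ref{3inf}, so $\mu_\alpha(\Sigma;\Lambda)\,G(w_j)$ can be subtracted and what remains must be bounded below by the two logarithmic remainders. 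The plan is then to apply the sharp one-dimensional inequalities: the $t^{-2}|\nabla u|^2$ remainder with constant $\tfrac14$ comes from applying the first inequality in \eqref{R1d} to the $\int|y'|^2$-type contribution (here the $t^{-2}(y'+\widehat\gamma_\alpha w)^2$ form from \eqref{log-nabla} must be handled, discarding the non-negative cross/square terms or integrating the cross term by parts), and the $t^{-4}|u|^2$ remainder with constant $\tfrac{\lambda_\Lambda(\alpha-2)^2}{4(\lambda_\Lambda+\widehat\gamma_\alpha^2)}$ comes from the $9/16$ Rellich inequality together with a careful bookkeeping of the coefficients $\lambda_j+\overline\gamma_\alpha$ and $\widehat\gamma_\alpha^2=\big(\tfrac{N+\alpha-4}2\big)^2$. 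The monotonicity in $\lambda_j\ge\lambda_\Lambda$ of the resulting remainder coefficient, which one checks via the explicit formula, lets me replace $\lambda_j$ by $\lambda_\Lambda$ uniformly and sum back over $j\in I_\Lambda$.

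For the optimality of $A_0,A_1,A_2$, the strategy is to construct, for each claimed bound, a sequence of test functions $u_\varepsilon\in C^2_{ND}(\Omega;\Lambda)$ saturating it. All three are built from the minimizing eigenfunction $\varphi\in H_\lambda$ with $\lambda$ attaining $\mu_\alpha(\Sigma;\Lambda)$, tensored with one-dimensional profiles $y_\varepsilon$ that concentrate to make the Hardy/Rellich inequalities in \eqref{R1d} asymptotically sharp. The bound $A_0\le\mu_\alpha(\Sigma;\Lambda)$ follows from the scaling argument already used in Theorem \ref{HR-cone} (letting the $t$-profile spread out so the logarithmic terms become negligible relative to the leading ones). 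For $A_1\le\tfrac14$ and $A_2\le\tfrac{\lambda(\alpha-2)^2}{4(\lambda+\widehat\gamma_\alpha^2)}$ I would insert the near-extremal profiles for the $1/4$ Hardy and $9/16$ Rellich inequalities, for which the ratios of the corresponding $t$-integrals tend to the sharp constants; dividing through by the dominant term then forces each $A_i$ to its stated value in the limit.

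The main obstacle I anticipate is the joint optimality of all three coefficients simultaneously, rather than each in isolation: the remainder terms share test functions, and the cross term $2\widehat\gamma_\alpha\int t^{-2}y'y$ in \eqref{log-nabla} (which integrates to $\mp\widehat\gamma_\alpha\int t^{-2}{}'|y|^2 = 2\widehat\gamma_\alpha\int t^{-3}|y|^2$, a term of intermediate homogeneity) couples the $t^{-2}$ and $t^{-4}$ levels and must be tracked exactly to recover the precise coefficient $\tfrac{\lambda_\Lambda(\alpha-2)^2}{4(\lambda_\Lambda+\widehat\gamma_\alpha^2)}$ rather than a weaker one. Ensuring that the same concentrating profile makes both the Hardy and the Rellich inequalities sharp at their respective scales — so that no coefficient can be improved at the expense of another — is the delicate point, and I expect it to require choosing $y_\varepsilon$ as a logarithmically rescaled version of the (non-$L^2$) extremals of \eqref{R1d} truncated to have compact support in $\R_+$.
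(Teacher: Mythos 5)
Your overall architecture is the paper's: pass to the half-cylinder via the Emden--Fowler transform, control the transformed Dirichlet energy by the one-dimensional inequalities \eqref{R1d}, and obtain the upper bounds on $A_0,A_1,A_2$ by the scaling $w^\varepsilon(t,\omega)=w(\varepsilon t,\omega)$ in the limits $\varepsilon\to0$ and $\varepsilon\to\infty$. Your eigenfunction decomposition is a legitimate variant of the paper's global argument (the paper instead uses the Poincar\'e inequality \eqref{P-w} and the odd-extension trick \eqref{halfcilinder} directly on $w$), and it even sidesteps the odd-extension step, since for a single mode $w_j=y_j\varphi_j$ the bound $\frac{(\lambda_j+\gamma_\alpha)^2}{\lambda_j+\widehat\gamma_\alpha^2}\ge\mu_\alpha(\Sigma;\Lambda)$ is immediate from \eqref{crucial=}.

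There is, however, one concrete error: you attribute the third remainder term, and the optimality of $A_2$, to the $9/16$ one-dimensional \emph{Rellich} inequality. That term carries the weight $|x|^{\alpha-4}\left|\log|x|\right|^{-2}$, which by \eqref{log-u2} with $\beta=2$ becomes $\int_Z t^{-2}|w|^2$ on the cylinder --- a $t^{-2}$ weight, not $t^{-4}$. The Rellich inequality in \eqref{R1d} produces $\int t^{-4}|\psi|^2$ and is the engine of Theorem \ref{52} (the $\left|\log|x|\right|^{-4}$ refinement), not of \eqref{ND}. The correct mechanism is: after subtracting $\mu_\alpha(\Sigma;\Lambda)G(w)$ and the $\frac14$ log-gradient term, a surplus $\bigl(\lambda_\Lambda+2\overline\gamma_\alpha-\mu_\alpha(\Sigma;\Lambda)-\widehat\gamma_\alpha^2\bigr)\int_Z|w_t|^2$ remains, the coefficient equals $\frac{\lambda_\Lambda(\alpha-2)^2}{\lambda_\Lambda+\widehat\gamma_\alpha^2}$ by the algebraic identity $2\overline\gamma_\alpha\widehat\gamma_\alpha^2-\gamma_\alpha^2=\widehat\gamma_\alpha^4$, and one concludes with the \emph{Hardy} inequality $\int|w_t|^2\ge\frac14\int t^{-2}|w|^2$; the single factor $\frac14$ in the stated constant is the fingerprint of one Hardy application. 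Likewise the sharpness of $A_2$ in the paper is extracted, after setting $A_0=\mu_\alpha(\Sigma;\Lambda)$ and $A_1=\frac14$ and letting $\varepsilon\to0$, from near-extremals of the $1/4$ Hardy inequality, not of Rellich. Finally, the cross term $2\widehat\gamma_\alpha\int t^{-2}ww_t$ that you flag as the delicate point is in fact harmless: in the lower bound one never isolates it, because the whole square $|w_t+\widehat\gamma_\alpha w|^2$ is controlled by applying Hardy to the function $w_t+\widehat\gamma_\alpha w\in H^1_0(\R_+)$, whose derivative has $L^2$ norm exactly $\int(|w_{tt}|^2+\widehat\gamma_\alpha^2|w_t|^2)$ after an integration by parts that annihilates the mixed term; in the optimality part it enters at the intermediate order $\varepsilon^2$ and vanishes in the relevant limits.
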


In other words, the coefficients $\mu_\alpha(\Sigma;\Lambda)$ and $\frac14$ in front of the first and the second integral at the right hand side in \eqref{ND} are optimal, whereas the third coefficient is so whenever $\mu_\alpha(\Sigma;\Lambda)$ is achieved by $\lambda_{\Lambda}$ (see Remark \ref{R:principal}). 

\begin{proof}
Let us discuss the case $\Omega=\{x\in C_\Sigma~:~|x|<1\}$ and let us start with some preliminary remarks. The Emden-Fowler transform $u\mapsto T(u)=w$ given by \eqref{Emden-Fowler} defines an isomorphism between the spaces $C^2_{ND}(\Omega)$ and 
\[
C^2_{ND}(Z):=\{w\in C_c^2(\overline{Z})~:~w|_{\partial Z}=0,~w=0\text{ in a neighborhood of }\{0\}\times\Sigma\}.
\]
Furthermore, the class $C^2_{ND}(\Omega;\Lambda)$ is transformed into the class $C^2_{ND}(Z;\Lambda)$ of functions in $C^2_{ND}(Z)$ satisfying the orthogonality condition \eqref{ortogonaleZ} for all $t>0$.
In particular, for $w\in C^2_{ND}(Z;\Lambda)$, we have that $w_t(\cdot,\omega)+\widehat\gamma_\alpha w(\cdot,\omega)\in H^1_0(\R_+)$ for every 
%\rosso CAMBIAMO TUTTI I $\sigma$ in $\omega$ in coerenza con \eqref{ZSigma}? \nero 
$\omega\in\Sigma$ and then, by the 1-dimensional Hardy inequality \eqref{R1d},
\begin{equation}
\label{H-ww'}
\int_Z\left(|w_{tt}|^2+\widehat\gamma_\alpha^2|w_t|^2\right)\,dt\,{\dsigma}\ge \frac14 \int_Zt^{-2}|w_t+\widehat\gamma_\alpha w|^2\,dt\,{\dsigma}\,.
\end{equation}
Similarly, since $|\nabla_\sigma w(\cdot,\omega)|\in H^1_0(\R_+)$ for every $\omega\in\Sigma$, we also have
\begin{equation}
\label{H-nabla-w}
\int_Z|\nabla_\sigma w_t|^2\,dt\,{\dsigma}\ge \frac14 \int_Zt^{-2}|\nabla_\sigma w|^2\,dt\,{\dsigma}\,.
\end{equation}
Moreover, since $w=0$ on $\partial Z$, we have that $w_t(t,\cdot)\in H^1_0(\Sigma)$ for every $t>0$ and then, in view of the orthogonality condition \eqref{ortogonaleZ},
\begin{equation}
\label{P-w}
\int_Z|\nabla_\sigma w_t|^2\,dt\,{\dsigma}\ge\lambda_\Lambda \int_Z|w_t|^2\,dt\,{\dsigma}\,.
\end{equation}
Finally, we observe that since the odd extension of $w$ with respect to $t$ belongs to $H^2 \cap H^1_0(Z_\Sigma)$, by Theorem \ref{3inf} (which can be exploited in view of the density of $C^2_0(Z_\Sigma)$ in $H^2 \cap H^1_0(Z_\Sigma)$), we have
\begin{equation}\label{halfcilinder}
\int_{Z}|L_\alpha w|^2\,dt\,{\dsigma} \ge \mu_\alpha(\Sigma;\Lambda)\int_{Z}Q_\alpha(w)\,dt\,{\dsigma}\,.
\end{equation}
Now, let us prove the first part of the Theorem. For every $u\in C^2_{ND}(\Omega;\Lambda)$, we have that $w=T(u)\in C^2_{ND}(Z;\Lambda)$ and, using \eqref{first-derivative-w}, \eqref{second-derivative-w}, \eqref{log-nabla} and the inequalities \eqref{H-ww'}--\eqref{halfcilinder}, we estimate
\[
\begin{split}
\int_\Omega&|x|^\alpha|\Delta u|^2\,dx-\mu_\alpha(\Sigma;\Lambda)\int_\Omega|x|^{\alpha-2}|\nabla u|^2\,dx-\frac14\int_{\Omega}|x|^{\alpha-2}\left|\log|x|\right|^{-2}|\nabla u|^2\,dx\\
&=\int_{Z}\left[|L_\alpha w|^2+|w_{tt}|^2+2|\nabla_\sigma w_t|^2+2\overline\gamma_\alpha|w_t|^2\right]\,dt\,{\dsigma}\\
&-\mu_\alpha(\Sigma;\Lambda)\int_{Z}\left[Q_\alpha(w)+|w_t|^2\right]\,dt\,{\dsigma}-\frac{1}{4} \int_{Z}t^{-2}\left(|\nabla_\sigma w|^2+|w_t+\widehat\gamma_\alpha w|^2\right)\,dt\,{\dsigma} \\
&\geq \left(\lambda_\Lambda+2\overline\gamma_\alpha-\mu_\alpha(\Sigma;\Lambda)-\widehat\gamma_\alpha^2\right)\int_{Z_\Sigma^+}|w_t|^2\,dt\,{\dsigma}.
\end{split}\]
In particular, for future reference, we point out that in the above we have used both \eqref{H-nabla-w} and \eqref{P-w} to estimate
\begin{equation}\label{difference}
2\int_Z|\nabla_\sigma w_t|^2\,dt\,{\dsigma}\ge \frac14\int_Zt^{-2}|\nabla_\sigma w|^2\,dt\,{\dsigma}+\lambda_\Lambda\int_Z|w_t|^2\,dt\,{\dsigma}\,.
\end{equation}
Then, recalling that $\mu_\alpha(\Sigma;\Lambda)\le \frac{(\lambda_\Lambda+\gamma_\alpha)^2}{\lambda_\Lambda+\widehat\gamma_\alpha^2}$, we estimate
\[
\lambda_\Lambda+2\overline\gamma_\alpha-\mu_\alpha(\Sigma;\Lambda)-\widehat\gamma_\alpha^2\ge\frac{\lambda_\Lambda(\alpha-2)^2}{\widehat\gamma_\alpha^2+\lambda_\Lambda}
\]
and using the 1-dimensional Hardy inequality and \eqref{log-u2} with $\beta=2$, we obtain \eqref{ND}. \\
Let us now prove the second part of the theorem. For every $w\in C^2_{ND}(Z;\Lambda)$ let $u=T^{-1}(w)\in C^2_{ND}(\Omega;\Lambda)$, where $T$ is the Emden-Fowler transform \eqref{Emden-Fowler}. Using \eqref{first-derivative-w}, \eqref{second-derivative-w}, \eqref{log-nabla}, \eqref{log-u2} and the assumption \eqref{ND-A}, we obtain
\[\begin{split}
0&\le 
\int_{Z}\left[|L_\alpha w|^2+|w_{tt}|^2+2|\nabla_\sigma w_t|^2+2\overline\gamma_\alpha|w_t|^2\right]-A_0\int_{Z}\left[Q_\alpha(w)+|w_t|^2\right]\\
&\quad-A_1\int_{Z}t^{-2}\left(|\nabla_\sigma w|^2+|w_t+\widehat\gamma_\alpha w|^2\right)-A_2\int_Z t^{-2}|w|^2.
\end{split}\]
The same inequality holds true with $w^\varepsilon(t,\omega)=w(\varepsilon t,\omega)$ instead of $w$, for every $\varepsilon>0$. This yields that
\begin{equation}
\label{ND-eps}
\begin{split}
0&\le \frac1\varepsilon\int_{Z}|L_\alpha w|^2+\varepsilon^3\int_{Z}|w_{tt}|^2+2\varepsilon\int_{Z}\left[|\nabla_\sigma w_t|^2+\overline\gamma_\alpha|w_t|^2\right]\\
&\quad-\frac{A_0}\varepsilon\int_{Z}Q_\alpha(w)-\varepsilon A_0\int_{Z}|w_t|^2-\varepsilon A_1\int_{Z}t^{-2}\left[|\nabla_\sigma w|^2+\widehat\gamma_\alpha^2|w|^2\right]\\
&\quad-\varepsilon^3 A_1\int_{Z}t^{-2}|w_t|^2-2\varepsilon^2 A_1\widehat\gamma_\alpha\int_{Z}t^{-2}ww_t-A_2\varepsilon\int_{Z}t^{-2}|w|^2.
\end{split}
\end{equation}
Multiplying \eqref{ND-eps} by $\varepsilon>0$ and letting $\varepsilon\to 0$, we obtain
\begin{equation*}
%\label{ND-A0}
A_0\int_{Z}Q_\alpha(w)\,dt\,{\dsigma}\le \int_{Z}|L_\alpha w|^2\,dt\,{\dsigma}\quad\forall w\in C^2_{ND}(Z;\Lambda).
\end{equation*}
By Theorem \ref{3inf}, $A_0\le\mu_\alpha(\Sigma;\Lambda)$. 
%\rosso Per questa ottimalit\'a basterebbe invocare il Teorema \ref{3inf}, ma intanto poi l'argomento sotto ci serve in seguito...non so se positiciparlo... \nero
%In particular, we can take 
%\begin{equation}\label{w-factorized}w(t,\omega)=\varphi(\omega)\psi(t)\end{equation} 
%with $\psi\in C_c^2(\R_+)$, $\psi\ne 0$, and $\varphi$ eigenfunction corresponding to $\lambda\in \Lambda $ such that
%\[\mu_\alpha(\Sigma;\Lambda)=\frac{(\lambda+\gamma_\alpha)^2}{\lambda+\widehat\gamma_\alpha^2}.\]
%Plugging such a $w$ into \eqref{ND-A0}, we plainly obtain
%\[A_0\le\frac{(\lambda+\gamma_\alpha)^2}{\lambda+\widehat\gamma_\alpha^2}=\mu_\alpha(\Sigma;\Lambda).\]
Multiplying \eqref{ND-eps} by $\varepsilon^{-3}>0$ and letting $\varepsilon\to\infty$, we deduce that
\[
A_1\int_{Z}t^{-2}|w_t|^2\,dt\,{\dsigma}\le \int_{Z}|w_{tt}|^2\,dt\,{\dsigma}\quad\forall w\in C^2_{ND}(Z;\Lambda).
\]
In particular, if in the above we take $w(t,\omega)=\psi(t)\varphi(\omega)$ with $\psi\in C_c^2(\R_+)$, $\psi\ne 0$, and $\varphi$ eigenfunction corresponding to $\lambda\in \Lambda $, recalling \eqref{R1d} we infer that
\[
A_1\le \inf_{\scriptstyle\psi\in C^2_c(\R_+)\atop\scriptstyle\psi\ne 0}\frac{\displaystyle\int_0^\infty|\psi''|^2\,dt}{\displaystyle\int_0^\infty t^{-2}|\psi'|^2\,dt}=\frac14.
\]
Instead, if in \eqref{ND-eps} we assume that $w(t,\omega)$ as given above with $\varphi$ eigenfunction corresponding to $\lambda\in \Lambda $ such that $\mu_\alpha(\Sigma;\Lambda)={(\lambda+\gamma_\alpha)^2}/{(\lambda+\widehat\gamma_\alpha^2)}$, 
%Finally, taking in \eqref{ND-eps} functions $w$ of the form \eqref{w-factorized} 
and dividing by $\varepsilon>0$ we get
\[\begin{split}
A_2\int_0^\infty &t^{-2}|\psi|^2\,dt\le\frac{(\lambda+\gamma_\alpha)^2-A_0(\lambda+\widehat\gamma_\alpha^2)}{\varepsilon^2}\int_0^\infty|\psi|^2\,dt\\
&\quad+(2\lambda+2\overline\gamma_\alpha-A_0)\int_0^\infty|\psi'|^2\,dt-A_1(\lambda+\widehat\gamma_\alpha^2)\int_0^\infty t^{-2}|\psi|^2\,dt\\
&\quad-2\varepsilon A_1\widehat\gamma_\alpha\int_0^\infty t^{-2}\psi\psi'\,dt+\epsilon^2\left(\int_0^\infty|\psi''|^2\,dt-A_1\int_0^\infty t^{-2}|\psi'|^2\,dt\right).
\end{split}\]
We can set
\begin{equation}
\label{NDA0}
A_0=\frac{(\lambda+\gamma_\alpha)^2}{\lambda+\widehat\gamma_\alpha^2}=\mu_\alpha(\Sigma;\Lambda)
\end{equation}
and letting $\varepsilon\to 0$, we arrive to estimate
\[
A_2\int_0^\infty t^{-2}|\psi|^2\,dt\le(2\lambda+2\overline\gamma_\alpha-\mu_\alpha(\Sigma;\Lambda))\int_0^\infty|\psi'|^2\,dt-A_1(\lambda+\widehat\gamma_\alpha^2)\int_0^\infty t^{-2}|\psi|^2\,dt
\]
for every $\psi\in C^2_c(\R_+)$. We point out that, by \eqref{NDA0},
\begin{equation}
\label{kappapositive}
2\lambda+2\overline\gamma_\alpha-\mu_\alpha(\Sigma;\Lambda)=p_\alpha(\lambda)\ge 0
\end{equation} (see \eqref{p-alpha}). Thus, by the 1-dimensional Hardy inequality, selecting $A_1=\frac14$ and using the expression of $\mu_\alpha(\Sigma;\Lambda)$ as in \eqref{NDA0}, we finally obtain
\[
A_2\le \frac14\left[2\lambda+2\overline\gamma_\alpha-\frac{(\lambda+\gamma_\alpha)^2}{\lambda+\widehat\gamma_\alpha^2}\right]-\frac{\lambda+\widehat\gamma_\alpha^2}4=\frac{\lambda(\alpha-4)^2}{4(\widehat\gamma_\alpha^2+\lambda)}
\]
by the expressions of $\gamma_\alpha$, $\overline\gamma_\alpha$ and $\widehat\gamma_\alpha$ provided in \eqref{ga}. The case $\Omega=\{x\in C_\Sigma~:~|x|>1\}$ can be studied exactly in the same way, with the only difference that $Z=\R_-\times\Sigma$. 
\end{proof} 

Using the same technique applied in the previous proof, we can prove another version of Hardy-Rellich inequality in $C^2_{ND}(\Omega;\Lambda)$ with remainder terms which involve only $L^2$-norms of $u$ with logarithmic weights, as shown below.

\begin{Theorem}\label{52}
Let $\alpha\in\R$ and let $\Omega$ be as in \eqref{Omega}. Furthermore, let $\Lambda$ be a nonempty subset of $\Lambda_\Sigma$. Then for every $u\in C^2_{ND}(\Omega;\Lambda)$ one has
\begin{equation}\label{ND2}
\begin{split}
\int_{\Omega}|x|^\alpha|\Delta u|^2 dx&\ge  \mu_\alpha(\Sigma;\Lambda)\int_{\Omega}|x|^{\alpha-2}|\nabla u|^2 dx+\kappa_{\alpha}(\Sigma;\Lambda)\int_{\Omega}|x|^{\alpha-4}\left|\log|x|\right|^{-2}|u|^2 dx\\
&\quad +\frac9{16}\int_{\Omega}|x|^{\alpha-4}\left|\log|x|\right|^{-4}|u|^2 dx
\end{split}
\end{equation}
where 
\begin{equation}
\label{kalpha}
\kappa_\alpha(\Sigma;\Lambda):=\frac{2\overline\gamma_\alpha+2\lambda_\Lambda-\mu_\alpha(\Sigma;\Lambda)}4\ge 0\,,
\end{equation} 
$\mu_\alpha(\Sigma;\Lambda)$ is given by \eqref{mu-alpha-Sigma}, $\lambda_{\Lambda}:=\min\Lambda$, and $\overline\gamma_\alpha$ is defined in \eqref{ga}.
Moreover, if $A_0,A_1,A_2\in\R$ are such that
\[\begin{split}
\int_{\Omega}   |x|^\alpha  |\Delta u|^2\,dx&\ge  A_0\int_{\Omega}|x|^{\alpha-2}|\nabla u|^2\,dx +A_1\int_{\Omega}|x|^{\alpha-4}\left|\log|x|\right|^{-2}|u|^2\,dx\\
&\quad +A_2\int_{\Omega}|x|^{\alpha-4}\left|\log|x|\right|^{-4}|u|^2\,dx
\end{split}
\]
for every $u\in C^2_{ND}(\Omega;\Lambda)$, then $A_0\le\mu_\alpha(\Sigma;\Lambda)$, $A_2\le \frac9{16}$, and $A_1\le(2\lambda+2\overline\gamma_\alpha-\mu_\alpha(\Sigma;\Lambda))/4$, 
where $\lambda \in \Lambda$ is such that $\mu_\alpha(\Sigma;\Lambda)=\frac{(\lambda+\gamma_\alpha)^2}{\lambda+\widehat\gamma_\alpha^2}$. 
\end{Theorem}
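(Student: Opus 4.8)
The plan is to push everything onto the cylinder through the Emden-Fowler transform $w=T(u)$ and then reduce the estimates to the one-dimensional Hardy and Rellich inequalities \eqref{R1d} in the variable $t$, combined with the half-cylinder estimate \eqref{halfcilinder} and the slice-wise Poincaré inequality \eqref{P-w}. Since all identities and auxiliary estimates have already been recorded for $w\in C^2_{ND}(Z;\Lambda)$ in the proof of Theorem \ref{ND-case}, I would carry out the argument for $\Omega=\{x\in C_\Sigma:|x|<1\}$ (so $Z=\R_+\times\Sigma$) and merely remark that $\Omega=\{x\in C_\Sigma:|x|>1\}$ (so $Z=\R_-\times\Sigma$) is handled identically. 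The point to keep in mind throughout is that $w\in C^2_{ND}(Z;\Lambda)$ means $w(\cdot,\omega)\in C^2_c(\R_+)$ for each $\omega$ and $w(t,\cdot)\in H^1_0(\Sigma)$ satisfies the orthogonality \eqref{ortogonaleZ} for each $t$, which is exactly what the invoked inequalities require.

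For the inequality \eqref{ND2}, I would first verify the sign claim $\kappa_\alpha(\Sigma;\Lambda)\ge 0$: because $\lambda_\Lambda\in\Lambda$ gives $\mu_\alpha(\Sigma;\Lambda)\le(\lambda_\Lambda+\gamma_\alpha)^2/(\lambda_\Lambda+\widehat\gamma_\alpha^2)$, the numerator $2\overline\gamma_\alpha+2\lambda_\Lambda-\mu_\alpha(\Sigma;\Lambda)$ in \eqref{kalpha} is bounded below by $p_\alpha(\lambda_\Lambda)/(\lambda_\Lambda+\widehat\gamma_\alpha^2)$, which is nonnegative since $p_\alpha$ has nonnegative coefficients (see \eqref{p-alpha}); the degenerate convention $\widehat\gamma_\alpha=0=\lambda_\Lambda$ gives $\kappa_\alpha=0$ directly. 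Then, for $w=T(u)$, I would expand $\int_\Omega|x|^\alpha|\Delta u|^2$ by \eqref{second-derivative-w} and estimate its four pieces: $\int_Z|L_\alpha w|^2\ge\mu_\alpha(\Sigma;\Lambda)\int_Z Q_\alpha(w)$ via \eqref{halfcilinder}; $\int_Z|w_{tt}|^2\ge\frac9{16}\int_Z t^{-4}|w|^2$ via the one-dimensional Rellich inequality applied in $t$; and $2\int_Z|\nabla_\sigma w_t|^2\ge 2\lambda_\Lambda\int_Z|w_t|^2$ via \eqref{P-w}. The total coefficient of $\int_Z|w_t|^2$ is then at least $2\overline\gamma_\alpha+2\lambda_\Lambda=\mu_\alpha(\Sigma;\Lambda)+4\kappa_\alpha(\Sigma;\Lambda)$; I would keep $\mu_\alpha(\Sigma;\Lambda)\int_Z|w_t|^2$ to complete, together with the $Q_\alpha$-term, the full quantity $\mu_\alpha(\Sigma;\Lambda)\int_Z(Q_\alpha(w)+|w_t|^2)$, and apply the one-dimensional Hardy inequality to the surplus, $4\kappa_\alpha\int_Z|w_t|^2\ge\kappa_\alpha\int_Z t^{-2}|w|^2$. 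Reassembling and converting back with \eqref{first-derivative-w} and \eqref{log-u2} (for $\beta=2$ and $\beta=4$) produces exactly \eqref{ND2}.

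For optimality I would transport the assumed inequality to the cylinder, getting
\[
\int_Z\!\big(|L_\alpha w|^2+|w_{tt}|^2+2|\nabla_\sigma w_t|^2+2\overline\gamma_\alpha|w_t|^2\big)\ge A_0\!\int_Z\!\big(Q_\alpha(w)+|w_t|^2\big)+A_1\!\int_Z t^{-2}|w|^2+A_2\!\int_Z t^{-4}|w|^2
\]
for all $w\in C^2_{ND}(Z;\Lambda)$, and test it on the rescaled functions $w^\varepsilon(t,\omega)=w(\varepsilon t,\omega)$. The three terms organize into three scales in $\varepsilon$: the $\varepsilon^{-1}$-scale ($L_\alpha$ and $Q_\alpha$ terms), the $\varepsilon^{+1}$-scale ($|w_t|^2$, $|\nabla_\sigma w_t|^2$, $t^{-2}|w|^2$), and the $\varepsilon^{+3}$-scale ($|w_{tt}|^2$, $t^{-4}|w|^2$). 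Multiplying by $\varepsilon$ and letting $\varepsilon\to0$ isolates the first scale and leaves $\int_Z|L_\alpha w|^2\ge A_0\int_Z Q_\alpha(w)$, whence $A_0\le\mu_\alpha(\Sigma;\Lambda)$ by Theorem \ref{3inf}. Multiplying by $\varepsilon^{-3}$ and letting $\varepsilon\to\infty$ isolates the third scale, giving $\int_Z|w_{tt}|^2\ge A_2\int_Z t^{-4}|w|^2$, so $A_2\le\frac9{16}$ after choosing $w=\psi\varphi$ with $\varphi$ an eigenfunction, $\psi\in C^2_c(\R_+)$, and using the one-dimensional Rellich inequality. Finally, to reach the middle scale, I would take $w=\psi\varphi$ with $\varphi$ an eigenfunction for the $\lambda$ realizing $\mu_\alpha(\Sigma;\Lambda)=(\lambda+\gamma_\alpha)^2/(\lambda+\widehat\gamma_\alpha^2)$ and set $A_0=\mu_\alpha(\Sigma;\Lambda)$, so that the singular $\varepsilon^{-2}$ contribution cancels; letting $\varepsilon\to0$ then leaves $(2\lambda+2\overline\gamma_\alpha-\mu_\alpha(\Sigma;\Lambda))\int|\psi'|^2\ge A_1\int t^{-2}|\psi|^2$, and the one-dimensional Hardy inequality yields $A_1\le(2\lambda+2\overline\gamma_\alpha-\mu_\alpha(\Sigma;\Lambda))/4$.

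The genuinely delicate point is the bound on $A_1$: it lives at the intermediate $\varepsilon^{+1}$-scale and can be extracted only after the dominant $\varepsilon^{-1}$-scale is annihilated, which forces the borderline choice $A_0=\mu_\alpha(\Sigma;\Lambda)$ together with a minimizing eigenfunction (exactly as in the proof of Theorem \ref{ND-case}); for generic $A_0<\mu_\alpha(\Sigma;\Lambda)$ the $\varepsilon^{-2}$ term blows up and the $\varepsilon\to0$ limit is uninformative, so the optimality of $A_1$ is understood at the joint optimum. The remaining care is purely bookkeeping: recording the scaling weights of each term correctly and checking the sign $\kappa_\alpha(\Sigma;\Lambda)\ge 0$ through the nonnegativity of $p_\alpha(\lambda_\Lambda)$.
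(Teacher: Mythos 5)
Your proof is correct and follows essentially the same route as the paper, which itself reduces Theorem \ref{52} to the machinery of Theorem \ref{ND-case}: the lower bound via \eqref{halfcilinder}, \eqref{P-w}, and the one-dimensional Hardy and Rellich inequalities \eqref{R1d} applied to the decomposition \eqref{reduced-difference}, and the optimality via the rescaling $w^\varepsilon(t,\omega)=w(\varepsilon t,\omega)$ separating the three $\varepsilon$-scales, with the $A_2$ bound now coming from the one-dimensional Rellich inequality. Your explicit remark that the $A_1$ bound is only extractable after setting $A_0=\mu_\alpha(\Sigma;\Lambda)$ with a minimizing eigenfunction matches the paper's treatment and correctly identifies the one delicate point.
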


Hence, the coefficients $\mu_\alpha(\Sigma;\Lambda)$ and $9/16$ at the right hand side in \eqref{ND2} are optimal, whereas the  coefficient $\kappa_\alpha(\Sigma;\Lambda)$ is so whenever $\mu_\alpha(\Sigma;\Lambda)$ is realized in correspondence of $\lambda_{\Lambda}$ (see Remark \ref{R:principal}).

\begin{proof}
Firstly, let us observe that $\kappa_\alpha(\Sigma;\Lambda)\ge 0$ because of \eqref{kappapositive}. Now, the first part of the theorem can be shown with the same procedure used in the proof of Theorem \ref{ND-case}. In particular, it is more convenient to estimate just
\begin{equation}\label{reduced-difference}
\begin{split}
\int_\Omega|x|^\alpha|\Delta u|^2\,dx\,-\,&\mu_\alpha(\Sigma;\Lambda)\int_\Omega|x|^{\alpha-2}|\nabla u|^2\,dx\\
&=\int_{Z}\left[|L_\alpha w|^2+|w_{tt}|^2+2|\nabla_\sigma w_t|^2+2\overline\gamma_\alpha|w_t|^2\right]dt\,{\dsigma}\\
&\quad-\mu_\alpha(\Sigma;\Lambda)\int_{Z}\left[Q_\alpha(w)+|w_t|^2\right]dt\,{\dsigma}\,.
\end{split}
\end{equation}
Then, one uses \eqref{P-w},\eqref{halfcilinder}, the 1-dimensional Hardy and Rellich inequalities \eqref{R1d} and the identity \eqref{log-u2} with $\beta=2$ and $\beta=4$ and plainly arrives to \eqref{ND2}. Concerning the second part of the theorem, we follow again the same strategy of the proof of theorem \ref{ND-case}. The only difference is that the estimate of $A_2$ is obtained by means of \eqref{R1d}. 
\end{proof}

\subsection{The Dirichlet case in the punctured unit ball and in the exterior of the unit ball}

When $\Omega=\{x\in\R^N~:~0<|x|<1\}$ (or $\Omega=\{x\in\R^N~:~|x|>1\}$), we have that $C^2_{ND}(\Omega)=C^2_c(\Omega)$. Therefore, for every $\Lambda$ nonempty subset of $\Lambda_\Sigma$, Theorem \ref{ND-case} still holds in the space $C^2_c(\Omega;\Lambda)$ of functions in $C^2_c(\Omega)$ satisfying the orthogonality condition \eqref{ortogonale} for all $0<r<1$ (or for all $r>1$). However, when $\Lambda=\Lambda_\Sigma$ the last term in \eqref{ND} disappears, since the principal eigenvalue in $\Sigma=S^{N-1}$ is zero. Theorem \ref{punctured ball} below provides a different improved version of the Hardy-Rellich inequality in $C^2_c(\Omega;\Lambda)$, with an additional term involving the spherical gradient. 

\begin{Theorem}\label{punctured ball}
Let $\alpha\in\R$ and let $\Omega=\{x\in\R^N~:~0<|x|<1\}$ or $\Omega=\{x\in\R^N~:~|x|>1\}$ (with $N\ge 2$).  Furthermore, let $\Lambda$ be a nonempty subset of $\Lambda_\Sigma$.  Then, for every $u\in C^2_c(\Omega;\Lambda)$ one has
\begin{equation}\label{Nav_Dir_ineq_tris}
\begin{split}
\int_{\Omega}&|x|^\alpha|\Delta u|^2\,dx\ge \mu_\alpha(S^{N-1};\Lambda) \int_{\Omega}|x|^{\alpha-2}|\nabla u|^2 dx+\frac14\int_{\Omega}|x|^{\alpha-2}\left|\log|x|\right|^{-2}|\nabla u|^2 dx\\
&\quad+\frac{1}{4} \int_{\Omega} |x|^{\alpha-4} \left|\log|x|\right|^{-2} |\nabla_{\sigma} u|^2 dx+K_\alpha(S^{N-1};\Lambda) \int_{\Omega}|x|^{\alpha-4}\left|\log|x|\right|^{-2}|u|^2dx
\end{split}
\end{equation}
where $\mu_\alpha(S^{N-1};\Lambda)$ is given by \eqref{mu-alpha-Sigma}, $K_\alpha(S^{N-1};\Lambda) :=(2\overline\gamma_\alpha-\widehat\gamma_\alpha^2-\mu_\alpha(S^{N-1};\Lambda) )/4$, and $\overline\gamma_\alpha$ and $\widehat\gamma_\alpha$ are defined in \eqref{ga}. 
Moreover, if $A_0,A_1,A_2\in\R$ are such that
\begin{equation}\label{ND-AA}
\begin{split}
\int_{\Omega}   |x|^\alpha  |\Delta u|^2\,dx&\ge  A_0\int_{\Omega}|x|^{\alpha-2}|\nabla u|^2\,dx +A_1\int_{\Omega}|x|^{\alpha-4}\left|\log|x|\right|^{-2}|u|^2\,dx\\
&\quad +A_2\int_{\Omega} |x|^{\alpha-4} \left|\log|x|\right|^{-2} |\nabla_{\sigma} u|^2\,dx
\end{split}
\end{equation}
for every $u\in C^2_{c}(\Omega)$, then $A_0\le\mu_\alpha(S^{N-1};\Lambda) $ and $A_1\le \frac14$. 
%\rosso Furthermore, if $\Lambda=\Lambda_{\Sigma}$(in alternativa, si potrebbe chiedere $\mu_{\alpha}$ bounded e $\Lambda$ unbounded ma mi sembra troppo brutto, eviterei, ho evidenziato nella dim le parti che andrebbero cambiate) \nero, then $A_2\le \inf_{\psi\in C^2_c(\R_+)\setminus\{0\}}R(\psi)$, 
 Furthermore, if $\Lambda=\Lambda_{\Sigma}$, then $A_2\le \inf_{\psi\in C^2_c(\R_+)\setminus\{0\}}R(\psi)$, where
\begin{equation}\label{R(psi)}
R(\psi)=\frac{2\left[\int_0^\infty|\psi|^2\,dt\right]^{\frac12}\left[\int_0^\infty|\psi''|^2\,dt-\frac14\int_0^\infty t^{-2}|\psi'|^2\,dt\right]^{\frac12}}{\int_0^\infty t^{-2}|\psi|^2\,dt}+2\,\frac{\int_0^\infty|\psi'|^2\,dt}{\int_0^\infty t^{-2}|\psi|^2\,dt}-\frac14\,.
\end{equation}
\end{Theorem}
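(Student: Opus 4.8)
\emph{Strategy.} The plan is to transport everything to the half-cylinder by the Emden--Fowler transform, exactly as in Theorems \ref{ND-case} and \ref{52}, the new feature being that for $\Sigma=S^{N-1}$ the principal eigenvalue is $0$, so the Poincar\'e step \eqref{P-w} gives nothing and the spherical gradient must be retained rather than converted into an $|u|^2$-term; this is the reason \eqref{Nav_Dir_ineq_tris} carries the additional $|\nabla_\sigma u|^2$-term. First I would put $w=T(u)$ as in \eqref{Emden-Fowler}: for $u\in C^2_c(\Omega;\Lambda)$ the function $w$ has compact support in $Z=\R_+\times S^{N-1}$ (in $\R_-\times S^{N-1}$ for the exterior domain) and vanishes on $\partial Z=\{0\}\times S^{N-1}$, so that $w(\cdot,\omega)$, $w_t(\cdot,\omega)$ and every component of $\nabla_\sigma w(\cdot,\omega)$ lie in $H^1_0(\R_+)$. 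Then I would rewrite all integrals in \eqref{Nav_Dir_ineq_tris} over $Z$ by means of \eqref{first-derivative-w}, \eqref{second-derivative-w}, \eqref{log-nabla}, \eqref{log-nabla-sigma} and \eqref{log-u2}.

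\emph{The inequality.} I would estimate the reduced difference $\int_\Omega|x|^\alpha|\Delta u|^2-\mu_\alpha(S^{N-1};\Lambda)\int_\Omega|x|^{\alpha-2}|\nabla u|^2$ as in \eqref{reduced-difference} and then dispose of the cylinder terms in four moves: $\int_Z(|L_\alpha w|^2-\mu_\alpha Q_\alpha(w))\ge0$ by \eqref{halfcilinder}; the \emph{entire} term $2\int_Z|\nabla_\sigma w_t|^2$ is bounded below by \eqref{H-nabla-w} by $\tfrac12\int_Z t^{-2}|\nabla_\sigma w|^2$, which is exactly what is needed to feed both the spherical part of the $|\nabla u|^2$-log term and the whole $|\nabla_\sigma u|^2$-log term, each carrying a factor $\tfrac14$; the radial part $\tfrac14\int_Z t^{-2}|w_t+\widehat\gamma_\alpha w|^2$ is produced by \eqref{H-ww'}, which consumes $\int_Z(|w_{tt}|^2+\widehat\gamma_\alpha^2|w_t|^2)$; finally the surviving coefficient of $\int_Z|w_t|^2$ is $2\overline\gamma_\alpha-\widehat\gamma_\alpha^2-\mu_\alpha=4K_\alpha(S^{N-1};\Lambda)$, and one application of the one-dimensional Hardy inequality \eqref{R1d} turns $4K_\alpha\int_Z|w_t|^2$ into $K_\alpha\int_Z t^{-2}|w|^2$. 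Reversing $T$ yields \eqref{Nav_Dir_ineq_tris}. The only delicate point in this part is the sign $K_\alpha\ge0$ needed for the last Hardy step, which I would verify by the algebraic identities $\overline\gamma_\alpha-\gamma_\alpha=\tfrac12(\alpha-2)^2$ and $2\overline\gamma_\alpha\widehat\gamma_\alpha^2-\gamma_\alpha^2=\widehat\gamma_\alpha^4$ already used after \eqref{p-alpha}, in the same spirit as $\kappa_\alpha\ge0$ in \eqref{kappapositive}.

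\emph{Optimality of $A_0$ and $A_1$.} I would transform \eqref{ND-AA} to $Z$ and test it along the two standard families: the rescalings $w^\varepsilon(t,\omega)=w(\varepsilon t,\omega)$ and the separated functions $w=\psi(t)\varphi(\omega)$, with $\varphi$ an eigenfunction of $-\Delta_\sigma$ and $\psi\in C^2_c(\R_+)$. Multiplying by $\varepsilon$ and letting $\varepsilon\to0$ suppresses every term except $\int_Z|L_\alpha w|^2\ge A_0\int_Z Q_\alpha(w)$, whence $A_0\le\mu_\alpha(S^{N-1};\Lambda)$ by Theorem \ref{3inf} (testing within $C^2_c(\Omega;\Lambda)$); the opposite scaling limit together with the $t^{-2}$ one-dimensional Hardy inequality of \eqref{R1d} on $\psi$ gives $A_1\le\tfrac14$. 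These two steps are parallel to the corresponding arguments in Theorems \ref{ND-case} and \ref{52}.

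\emph{Optimality of $A_2$ and the main obstacle.} Because $A_2$ multiplies the spherical-gradient term, only non-radial competitors ($\lambda>0$) are informative, and one cannot simply let $\lambda\to\infty$, since $\int_Z|L_\alpha w|^2$ grows like $\lambda^2$ and makes \eqref{ND-AA} slack. My plan is to insert $w=\psi_\varepsilon(t)\varphi(\omega)$, $\psi_\varepsilon(t)=\psi(\varepsilon t)$, with $\varphi$ an eigenfunction of eigenvalue $\lambda$, solve the transformed inequality for $A_2$, and exploit the two free parameters. Minimising the $\varepsilon$-dependent part, which has the shape $\varepsilon^{-2}U+\varepsilon^2\int_0^\infty|\psi''|^2$ with $U=[(\lambda+\gamma_\alpha)^2-A_0(\lambda+\widehat\gamma_\alpha^2)]\int_0^\infty|\psi|^2$, by the arithmetic--geometric mean inequality produces the square-root factor $2\sqrt{U\int_0^\infty|\psi''|^2}$; dividing by $\lambda\int_0^\infty t^{-2}|\psi|^2$ and using $U\sim\lambda^2\int_0^\infty|\psi|^2$ makes $\lambda$ and all $\alpha,N$-dependent constants drop out, leaving the structure $\tfrac{2[\int|\psi|^2]^{1/2}[\int|\psi''|^2]^{1/2}}{\int t^{-2}|\psi|^2}+\tfrac{2\int|\psi'|^2}{\int t^{-2}|\psi|^2}$ of \eqref{R(psi)}. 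The hard part, and the main obstacle, is to \emph{sharpen} this to the exact expression \eqref{R(psi)}: I expect that a genuinely finite optimal eigenvalue (rather than $\lambda\to\infty$), in which the one-dimensional Hardy inequality $\int_0^\infty|\psi''|^2\ge\tfrac14\int_0^\infty t^{-2}|\psi'|^2$ for $\psi'$ is saturated, is what replaces $\int|\psi''|^2$ by the Hardy defect $\int|\psi''|^2-\tfrac14\int t^{-2}|\psi'|^2$ and supplies the additive $-\tfrac14$; carrying out this finite optimization, keeping track of which terms survive and verifying that every $\alpha$- and $N$-dependent constant cancels so as to reproduce \eqref{R(psi)} term by term, is the most technical step. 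Taking the infimum over $\psi\in C^2_c(\R_+)\setminus\{0\}$ then gives $A_2\le\inf_\psi R(\psi)$.
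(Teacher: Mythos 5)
Your derivation of inequality \eqref{Nav_Dir_ineq_tris} and of the bounds $A_0\le\mu_\alpha(S^{N-1};\Lambda)$, $A_1\le\frac14$ follows the paper's route exactly: you correctly identify that the whole of $2\int_Z|\nabla_\sigma w_t|^2$ must be spent through \eqref{H-nabla-w} (rather than split as in \eqref{difference}) to feed both quarter-coefficients, that the leftover coefficient of $\int_Z|w_t|^2$ is $4K_\alpha$, and that the scaling families $w^\varepsilon$ and $\psi(t)\varphi(\omega)$ give the first two optimality bounds. (For the sign $K_\alpha\ge 0$, the identity $2\overline\gamma_\alpha-\widehat\gamma_\alpha^2=\gamma_\alpha^2/\widehat\gamma_\alpha^2$ settles it when $0\in\Lambda$, which covers the case the paper addresses.)

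The gap is in the optimality of $A_2$. You reach the correct skeleton (separated competitors $\psi(\varepsilon t)\varphi(\omega)$, arithmetic--geometric mean over $\varepsilon$, large eigenvalue $\lambda$), but your proposed mechanism for producing the exact $R(\psi)$ --- a \emph{finite} optimal eigenvalue at which the one-dimensional Hardy inequality for $\psi'$ is saturated --- is not what happens and would not deliver \eqref{R(psi)}. The paper still sends $\lambda_j\to\infty$; the Hardy defect and the additive $-\frac14$ come from terms you dropped from your expansion, namely the $A_1$-terms of \eqref{ND-AA} evaluated at $A_1=\frac14$. Concretely, after transforming to $Z$ and rescaling, the contribution $-\varepsilon^3 A_1\int_Z t^{-2}|w_t|^2$ sits at the same order $\varepsilon^3$ as $\varepsilon^3\int_Z|w_{tt}|^2$, so the coefficient that enters the arithmetic--geometric mean step is not $\int_0^\infty|\psi''|^2$ but the defect $\int_0^\infty|\psi''|^2-\frac14\int_0^\infty t^{-2}|\psi'|^2$; and the contribution $-\varepsilon A_1\int_Z t^{-2}|\nabla_\sigma w|^2=-\varepsilon A_1\lambda_j\int_0^\infty t^{-2}|\psi|^2$, once divided by $\lambda_j\int_0^\infty t^{-2}|\psi|^2$, survives the limit as the additive $-\frac14$. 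One then takes $\varepsilon=\sqrt{\beta\lambda_j}$, lets $j\to\infty$ (the cross term $\varepsilon\int t^{-2}\psi\psi'$ and all $\alpha,N$-dependent corrections vanish in this limit), and minimizes the resulting expression $\beta X+\beta^{-1}Y+2\int|\psi'|^2/\int t^{-2}|\psi|^2-\frac14$ over $\beta>0$ to get exactly $R(\psi)$. Without carrying the $A_1$-terms through the computation, the step you flag as "the main obstacle" cannot be closed, and the alternative you suggest does not close it.
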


\begin{Remark} 
%\rosso Ho messo la notazione estesa $\mu_\alpha(S^{N-1};\Lambda)$ e $K_\alpha(S^{N-1};\Lambda)$ perch\'e nel caso $\Lambda=\Lambda_\Sigma$ si creava confusione con la nostra notazione precedente secondo cui $\mu_\alpha(\Sigma)=\mu_\alpha(\Sigma, \Lambda_\Sigma)$   \nero
Recalling \eqref{ga} and \eqref{HR-RN}, $K_\alpha(S^{N-1};\Lambda_\Sigma)=\left(\frac{N-\alpha}{2}\right)^2-\mu_\alpha(S^{N-1})\geq 0$. Hence, by \eqref{HR-RN}, $K_\alpha(S^{N-1};\Lambda_\Sigma)= 0$ if and only if the minimum in \eqref{mu-alpha-Sigma} is achieved by the principal eigenvalue (see Remark \ref{R:principal}). 
\end{Remark}

\begin{Remark}\label{Dirichlet-rem_terms} In \cite{GPS} the authors prove a family of Hardy-Rellich-type inequality in the ball $B_R=\{x\in\R^N~:~|x|<R\}$ with logarithmic refinements. In particular, when $R=1$, \cite[Theorem 2.3, case $N=1$]{GPS} yields the following inequality:
\begin{equation}\label{GPS_ineq}
\begin{split}
&\int_{B_1}|x|^\alpha|\Delta u|^2\,dx\ge \mu_\alpha (S^{N-1})\int_{B_1}|x|^{\alpha-2}|\nabla u|^2\,dx\\
&+\frac14\int_{B_1}|x|^{\alpha-2}\left|\log|x|\right|^{-2}|\nabla u|^2 \,dx+\frac{1}{4} \int_{B_1} |x|^{\alpha-4} \left|\log|x|\right|^{-2} |\nabla_{\sigma} u|^2\,dx\,,
\end{split}
\end{equation}
for all $u\in C^\infty_c(B_1\setminus\{0\})$, which overlaps with \eqref{Nav_Dir_ineq_tris} (with $\Lambda=\Lambda_{\Sigma}$) except for the last term which appears new. This is related to the different techniques applied in the proofs. Furthermore, \eqref{GPS_ineq},  without the term involving the spherical gradient and stated for test functions in $C^\infty_c(B_1)$, also follows from \cite[Theorem 1.8]{TZ}, under some restrictions on the parameter $\alpha$. In \cite{TZ} the optimality of the constant $\frac{1}{4}$, in front of the remainder term with the gradient and logarithmic weight, was shown in the energy space $H^2_0(B_1)$. As far we are aware, this optimality issue in the functional space $C^2_c(B_1\setminus\{0\})$, with no restrictions on $\alpha$, was not known before, see also \cite[Remark 2.4-(iii)]{GPS}. The optimality of the constant $\mu_\alpha (S^{N-1})$ was instead proved in \cite{GPS} and \cite{GPS2}. We remark that the results in \cite{TZ} hold for any bounded domain containing the origin and  Sobolev-type remainder terms were also provided, see also \cite{BT}.
\end{Remark}

\begin{Remark}
Here we provide a rough estimate of the infimum of the functional $R(\psi)$ defined in \eqref{R(psi)} over the class of non zero functions $\psi\in C^2_c(\R_+)$. By density, we can take $\psi\in H^2_0(\R_+)=\{\psi\in H^2(\R_+)~:~\psi(0)=\psi'(0)=0\}$. 

For $\eps>0$ small, let 
%$\psi\in H^2_0(\R_+)$ of the form
\(
\psi_\eps(t)=t^{\frac32+\eps}e^{-t}
\).
Then $\psi_\eps\in H^2_0(\R^+)$ and, after some computation, one finds
\[
R(\psi_\eps)=
%\frac{\displaystyle 2\sqrt{\frac3{8}}\sqrt{\frac38+\frac5{16}}}{\displaystyle\frac1{4}}+\frac{\displaystyle\frac3{4}}{\displaystyle\frac1{4}}-\frac74+o(1)=
\sqrt{\frac{33}{2}}+\frac{5}4+o(1) \quad \text{with }o(1)\to 0 \text{ as } \eps\to 0\,.
\]
Hence, the coefficient $A_2$ in \eqref{ND-AA} satisfies $A_2\le \sqrt{\frac{33}{2}}+\frac{5}4\approx 5.3$.
\end{Remark}

\begin{proof}
The first part of the theorem can be proved with the same procedure used in the proof of Theorem \ref{ND-case}, replacing the estimate \eqref{difference} with \eqref{H-nabla-w}. In this way, taking into account of \eqref{log-nabla-sigma}, we plainly arrive to
\[\begin{split}
&\int_\Omega|x|^\alpha|\Delta u|^2\,dx-\mu_\alpha(S^{N-1};\Lambda) \int_\Omega|x|^{\alpha-2}|\nabla u|^2\,dx-\frac14\int_{\Omega}|x|^{\alpha-2}\left|\log|x|\right|^{-2}|\nabla u|^2\,dx\\
&=\int_{Z}\left[|L_\alpha w|^2+|w_{tt}|^2+2|\nabla_\sigma w_t|^2+2\overline\gamma_\alpha|w_t|^2\right]\,dt\,{\dsigma}\\
&-\mu_\alpha(S^{N-1};\Lambda) \int_{Z}\left[Q_\alpha(w)+|w_t|^2\right] dt\,{\dsigma}-\frac{1}{4} \int_{Z}t^{-2}\left(|\nabla_\sigma w|^2+|w_t+\widehat\gamma_\alpha w|^2\right)dt\,{\dsigma} \\
%&\geq \int_{Z_\Sigma^+}|w_{tt}|^2+  \frac14 t^{-2} |\nabla_\sigma w|^2 +2 \overline\gamma_\alpha |w_t|^2 -\mu_\alpha |w_t|^2 -\frac{1}{4}\left(w_t+ \widehat\gamma_\alpha w\right)^2\\
&\geq  \frac14 \int_{Z}  t^{-2} |\nabla_\sigma w|^2 dt\,{\dsigma}  + \left(2\overline\gamma_\alpha-\mu_\alpha(S^{N-1};\Lambda) -\widehat\gamma_\alpha^2\right)\int_{Z}|w_t|^2 dt\,{\dsigma}\\
& \geq  \frac{1}{4} \int_{\Omega} {|x|^{\alpha-4}\left|\log|x|\right|^{-2}|\nabla_{\sigma} u|^2} dx + K_\alpha(S^{N-1};\Lambda) \int_{\Omega}|x|^{\alpha-4}\left|\log|x|\right|^{-2}|u|^2 dx\,. 
\end{split}\]
Concerning the second part of the theorem, we follow again the same strategy of the proof of theorem \ref{ND-case} and we arrive to the following estimate:
\begin{equation}
\label{optn}
\begin{split}
0&\le \frac1\varepsilon\int_{Z}|L_\alpha w|^2+\varepsilon^3\int_{Z}|w_{tt}|^2+2\varepsilon\int_{Z}\left[|\nabla_\sigma w_t|^2+\overline\gamma_\alpha|w_t|^2\right]\\
&\quad-\frac{A_0}\varepsilon\int_{Z}Q_\alpha(w)-\varepsilon A_0\int_{Z}|w_t|^2-\varepsilon A_1\int_{Z}t^{-2}\left[|\nabla_\sigma w|^2+\widehat\gamma_\alpha^2|w|^2\right]\\
&\quad-\varepsilon^3 A_1\int_{Z}t^{-2}|w_t|^2-2\varepsilon^2A_1\widehat\gamma_\alpha\int_{Z}t^{-2}ww_t-\varepsilon A_2\int_{Z}t^{-2}|\nabla_\sigma w|^2\,.
\end{split}
\end{equation}
Arguing as in the proof of theorem \ref{ND-case}, we obtain that $A_0\le\mu_\alpha(S^{N-1};\Lambda) $ and $A_1\le\frac14$. The new part in the proof concerns the estimate of $A_2$ when $\Lambda=\Lambda_{\Sigma}$. To this aim, we write \eqref{optn} with $A_0=\mu_\alpha(S^{N-1};\Lambda) $, $A_1=\frac14$ and $w(t,\omega)=\varphi(\omega)\psi(t)$, with $\psi$ as before and $\varphi$ eigenfunction of $-\Delta_\sigma$ in $H^1(S^{N-1})$ corresponding to an eigenvalue $\lambda_j=j(j+N-1)$ with $j\ge 1$. 
% \rosso such that $\lambda_j\in \Lambda$.\nero 
Then \eqref{optn} yields
\[
\begin{split}
A_2&\le
\frac1{\varepsilon^2}\frac{\left[(\lambda_j+\gamma_\alpha)^2-\mu_\alpha(\lambda_j+\widehat\gamma_\alpha^2)\right]\int_0^\infty|\psi|^2\,dt}{\lambda_j\int_0^\infty t^{-2}|\psi|^2\,dt}
+\varepsilon^2\frac{\int_0^\infty|\psi''|^2\,dt-\frac14\int_0^\infty t^{-2}|\psi'|^2\,dt}{\lambda_j\int_0^\infty t^{-2}|\psi|^2\,dt}\\
&\quad -\frac{\varepsilon\widehat\gamma_\alpha}{2}\frac{\int_0^\infty t^{-2}\psi\psi'\,dt}{\lambda_j\int_0^\infty t^{-2}|\psi|^2\,dt}+\frac{(2\lambda_j+2\overline\gamma_\alpha-\mu_\alpha)\int_0^\infty|\psi'|^2\,dt}{\lambda_j\int_0^\infty t^{-2}|\psi|^2\,dt}-\frac14\left(1-\frac{\overline\gamma_\alpha}{\lambda_j}\right)
\end{split}\]
%\blu nell'ultimo addendo credo ci debba essere $\frac{(2\lambda_j+2\overline\gamma_\alpha-\mu_\alpha)\int_0^\infty|\psi'|^2\,dt}{\lambda_j\int_0^\infty t^{-2}|\psi|^2\,dt}$ ovvero + davanti a $\overline\gamma_\alpha$ e poi mi manca il termine che corrisponde a  $-A_1\varepsilon\int_{Z^+}t^{-2}\left[|\nabla_\sigma w|^2+\widehat\gamma_\alpha^2|w|^2 \right]$ che forse aiuta un minimo? ma magari sbaglio...  
%\color{magenta}
%\par
for every $\varepsilon>0$ and for every $\psi\in C^2_c(\R_+)$. 
%\rosso Since $\Lambda$ is unbounded there exist a subsequence $\{\lambda_{j_k}\}_{k\geq 1}\subseteq \Lambda$ which diverges. Taking $\varepsilon=\sqrt{\beta\lambda_{j_k}}$ with $\beta>0$ and letting $k\to\infty$ IF $\mu_{\alpha}=o(\lambda_{j_k})$, \nero we obtain
Taking $\varepsilon=\sqrt{\beta\lambda_{j}}$ with $\beta>0$ and letting $j\to\infty$, we obtain
\[
A_2\le \beta\,\frac{\int_0^\infty|\psi|^2\,dt}{\int_0^\infty t^{-2}|\psi|^2\,dt}+\frac1\beta\,\frac{\int_0^\infty|\psi''|^2\,dt-\frac14\int_0^\infty t^{-2}|\psi'|^2\,dt}{\int_0^\infty t^{-2}|\psi|^2\,dt}+2\,\frac{\int_0^\infty|\psi'|^2\,dt}{\int_0^\infty t^{-2}|\psi|^2\,dt}-\frac14\,.
\]
Minimizing with respect to $\beta>0$ we obtain $A_2\le R(\psi)$ %\[\begin{split}
%A_2&\le\frac{2\left[\int_0^\infty|\psi|^2\,dt\right]^{\frac12}\left[\int_0^\infty|\psi''|^2\,dt-\frac14\int_0^\infty t^{-2}|\psi'|^2\,dt\right]^{\frac12}}{\int_0^\infty t^{-2}|\psi|^2\,dt}\\
%&\quad+\frac{\int_0^\infty|\psi'|^2\,dt}{\int_0^\infty t^{-2}|\psi|^2\,dt}+\frac{\int_0^\infty|\psi'|^2\,dt-\frac14\int_0^\infty t^{-2}|\psi|^2\,dt}{\int_0^\infty t^{-2}|\psi|^2\,dt}\,.
%\end{split}\]
for every $\psi\in C^2_c(\R_+)$.
\end{proof}

\subsection{The weight-free Dirichlet case in the punctured unit ball and in the exterior of the unit ball}
 Here, we specialize Theorem \ref{52} to the case of the punctured unit ball or the exterior of the unit ball, without the weight in front of the laplacian, i.e., $\alpha=0$. 
 \begin{Corollary}\label{COR522}
Let $\Omega=\{x\in\R^N~:~0<|x|<1\}$ or $\Omega=\{x\in\R^N~:~|x|>1\}$ (with $N\ge 2$). Furthermore, let $\Lambda$ be a nonempty subset of $\Lambda_\Sigma$ and let $I_\Lambda=\{j\in\mathbb{N}~:~\lambda_j=j(j+N-2)\in  \Lambda  \}$. Then, for every $u\in C^2_{c}(\Omega;\Lambda)$ one has
\begin{equation}\label{ND22}
\begin{split}
\int_{\Omega}|\Delta u|^2\,dx&\ge  \mu_0(S^{N-1};\Lambda)\int_{\Omega}\frac{|\nabla u|^2}{|x|^{2}}\,dx+\kappa_{0}(S^{N-1};\Lambda)\int_{\Omega} \frac{|u|^2}{|x|^{4}\log^2|x|}\,dx \\
&\quad +\frac9{16}\int_{\Omega} \frac{|u|^2}{|x|^{4}\log^4|x|}\,dx
\end{split}
\end{equation}
where 
\begin{equation*}
\mu_0(S^{N-1};\Lambda)=\min_{j\in I_\Lambda} \frac{(N(N-4)+4j(j+N-2))^2}{4((N-4)^2+4j(j+N-2))} \,,
\end{equation*} 
with the agreement that the term corresponding to $j=0$ is $N^2/4$, and
\begin{equation*}
\kappa_0(\Lambda;S^{N-1})=\frac{N(N-4)+8+4 j_{\Lambda}(j_{\Lambda}+N-2)-2\mu_0(\Lambda)}{8} \quad \text{with } j_{\Lambda}:=\min I_\Lambda\,.
\end{equation*} 

Moreover, $\mu_{0}(S^{N-1};\Lambda)$ and $\frac{9}{16} $ are optimal for all $N\geq 2$ while $\kappa_0(S^{N-1};\Lambda)$ is optimal if  $j_{\Lambda}$ is the index achieving the minimum in $\mu_0(\Lambda)$.
\end{Corollary}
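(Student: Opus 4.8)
The plan is to obtain Corollary \ref{COR522} as a direct specialization of Theorem \ref{52} to the parameters $\alpha=0$ and $\Sigma=S^{N-1}$.

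First I would observe that when $\Sigma=S^{N-1}$ the cone $C_\Sigma$ is the whole punctured space $\R^N\setminus\{0\}$ and $\Sigma$ has no relative boundary in the sphere, so the lateral Dirichlet condition entering the definition of $C^2_{ND}(\Omega)$ is vacuous. Hence $C^2_{ND}(\Omega;\Lambda)=C^2_c(\Omega;\Lambda)$, which is exactly the space appearing in the statement, and Theorem \ref{52} applies verbatim.

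Next I would evaluate the three quantities of \eqref{ga} at $\alpha=0$,
\[
\gamma_0=\tfrac{N(N-4)}{4},\qquad \overline\gamma_0=\tfrac{N(N-4)+8}{4},\qquad \widehat\gamma_0^2=\tfrac{(N-4)^2}{4},
\]
and recall that $\Lambda_{S^{N-1}}=\{\lambda_j=j(j+N-2):j\ge 0\}$, so that $I_\Lambda$ is as defined. Substituting into the formula $\mu_\alpha(\Sigma;\Lambda)=\min_{\lambda\in\Lambda}(\gamma_\alpha+\lambda)^2/(\widehat\gamma_\alpha^2+\lambda)$ of Theorem \ref{HR-cone} (transferred to $\Sigma$ via \eqref{mu-alpha-Sigma}) and reducing the resulting powers of $4$ yields exactly the claimed expression for $\mu_0(S^{N-1};\Lambda)$; the $j=0$ term collapses to $N^2/4$ for $N\ne 4$, and, by the convention of Theorem \ref{HR-cone} for the degenerate value $\widehat\gamma_\alpha=0$, also for $N=4$. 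Likewise, inserting $\alpha=0$ and $\lambda_\Lambda=j_\Lambda(j_\Lambda+N-2)$ into \eqref{kalpha} and multiplying numerator and denominator by $2$ produces the announced formula for $\kappa_0(S^{N-1};\Lambda)$. With these identifications in hand, inequality \eqref{ND22} is precisely \eqref{ND2} with $|x|^\alpha=1$.

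Finally, for the optimality claims I would transfer the corresponding assertions of Theorem \ref{52}: the coefficients $\mu_0(S^{N-1};\Lambda)$ and $9/16$ are optimal unconditionally, hence for every $N\ge 2$, while the optimality of $\kappa_0(S^{N-1};\Lambda)$ is inherited only under the hypothesis of Theorem \ref{52} that the minimum defining $\mu_\alpha(\Sigma;\Lambda)$ be attained at $\lambda_\Lambda=\min\Lambda$, i.e.\ that $j_\Lambda$ realizes the minimum in $\mu_0(\Lambda)$. I do not expect any genuine obstacle here, since the entire argument is arithmetic substitution into the already-established Theorem \ref{52}; the only spot deserving a line of care is the degenerate case $N=4$ (where $\widehat\gamma_0=0$) in the $j=0$ term of $\mu_0$, which is handled by the stated convention and yields the announced value $N^2/4$.
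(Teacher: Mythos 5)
Your proposal is correct and is exactly what the paper intends: Corollary \ref{COR522} is stated as a direct specialization of Theorem \ref{52} to $\alpha=0$ and $\Sigma=S^{N-1}$ (so that $C^2_{ND}(\Omega;\Lambda)=C^2_c(\Omega;\Lambda)$), and your arithmetic for $\gamma_0$, $\overline\gamma_0$, $\widehat\gamma_0^2$, $\mu_0$ and $\kappa_0$, including the $N=4$ convention for the $j=0$ term, checks out. The transfer of the optimality statements, with $\kappa_0$ optimal only when $j_\Lambda$ attains the minimum defining $\mu_0$, likewise matches the paper.
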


\begin{Remark}
 When $\Omega$ is the open unit ball in $\mathbb{R}^4$, Adimurthi, Grossi, and Santra in  \cite[Theorem 2.1-(b)]{AGS} prove an inequality similar to \eqref{ND22}. In particular, they show that
\begin{equation}\label{AGS2}\begin{split}
\int_\Omega|\Delta u|^2&\ge 4\int_\Omega\frac{|\nabla u|^2}{|x|^2}-3\int_\Omega\frac{|P_1u|^2}{|x|^4}\\
&\quad+\frac34\int_\Omega\frac{|P_1u|^2}{|x|^4\log^2{\left|x\right|}}+\frac9{32}\int_\Omega\frac{|P_1u|^2}{|x|^4\log^4{\left|x\right|}}\quad\forall u\in H^2_0(\Omega)
\end{split}\end{equation}
where $P_1u$ is the projection of $u$ on the eigenspace  corresponding to the principal eigenvalue of $-\Delta_\sigma$ in $H^1_0(S^3)$, given by $\lambda_1=3$. In fact, when $P_1u=u$ one has
\[
\int_\Omega\frac{|\nabla u|^2}{|x|^2}\ge \int_\Omega\frac{|\nabla_\sigma u|^2}{|x|^2}=3\int_\Omega\frac{|u|^2}{|x|^2}
\]
and \eqref{AGS2} implies that
\begin{equation}\label{AGS3}\begin{split}
\int_\Omega|\Delta (P_1u)|^2&\ge 3\int_\Omega\frac{|\nabla(P_1u)|^2}{|x|^2}+\frac34\int_\Omega\frac{|P_1u|^2}{|x|^4\log^2{\left|x\right|}}+\frac9{32}\int_\Omega\frac{|P_1u|^2}{|x|^4\log^4{\left|x\right|}}
\end{split}\end{equation}
for all $u\in H^2_0(\Omega)$, whereas 
\begin{equation}\label{AGS1}
\int_\Omega|\Delta u|^2\ge 4\int_\Omega\frac{|\nabla u|^2}{|x|^2}
\end{equation}
for all $u\in H^2_0(\Omega)$ such that $P_1u=0$.
We can compare \eqref{AGS3} and \eqref{AGS1} with \eqref{ND22} for $N=4$ and $\Lambda=\{\lambda_1\}$ or $\Lambda=\Lambda_{S^3}\setminus \{\lambda_1\}$, which yields 
\begin{equation}\label{AGS3+}
\int_\Omega|\Delta (P_1u)|^2\ge 3\int_\Omega\frac{|\nabla(P_1u)|^2}{|x|^2}+\frac74\int_\Omega\frac{|P_1u|^2}{|x|^4\log^2{\left|x\right|}}+\frac9{16}\int_\Omega\frac{|P_1u|^2}{|x|^4\log^4{\left|x\right|}}
\end{equation}
for all $u\in H^2_0(\Omega)$ and
\begin{equation}\label{AGS1+}
\int_\Omega|\Delta u|^2\ge 4\int_\Omega\frac{|\nabla u|^2}{|x|^2}+\frac9{16}\int_\Omega\frac{|u|^2}{|x|^2\log^4|x|}
\end{equation}
for all $u\in H^2_0(\Omega)$  such that $P_1u=0$, which improve \eqref{AGS3} and \eqref{AGS1}, respectively. Note that in \eqref{AGS3+} and \eqref{AGS1+} we can take $u\in H^2_0(\Omega)$ because in dimension $N=4$ the weight $|x|^{-2}$ is integrable in $\Omega$ (see Remark \ref{RN}) and $C^2_c(\Omega)$ is dense in $H^2_0(\Omega)$.

\end{Remark}

We conclude the section by sumarizing in a unique statement the improved inequalities provided by Corollary \ref{COR522} and Theorem \ref{punctured ball} in the weight-free case, when no orthogonality condition is imposed, i.e. $\Lambda=\Lambda_\Sigma$.

\begin{Corollary}
%\label{HR-cone_like_ball}
Let $\Omega=\{x\in\R^N~:~0<|x|<1\}$ or $\Omega=\{x\in\R^N~:~|x|>1\}$ (with $N\ge 2$). For every $u \in C^2_c(\Omega)$ we have
\[
\int_{\Omega} |\Delta u|^2\,dx\ge \mu_{0} \int_{\Omega}\frac{|\nabla u|^2}{|x|^{2}}\,dx+\kappa_0 \int_{\Omega} \frac{|u|^2}{|x|^{4}\log^2|x|}\,dx+\frac{9}{16} \int_{\Omega} \frac{|u|^2}{|x|^{4}\log^4|x|}\,dx
\]
and also
\[
\int_{\Omega} |\Delta u|^2\,dx\ge \mu_{0} \int_{\Omega}\frac{|\nabla u|^2}{|x|^{2}}\,dx+\frac14 \int_{\Omega} \frac{|\nabla u|^2}{|x|^{2}\log^2|x|}\,dx+\frac{1}{4} \int_{\Omega} \frac{|\nabla_\sigma u|^2}{|x|^{4}\log^2|x|}\,dx
\]
where
\[
\mu_{0}=\mu_{0}(N)=\begin{cases}0& \text{for $N=2$}\\
\frac{25}{36}& \text{for $N=3$}\\ 3&\text{for $N=4$}\\ \frac{N^2}4&\text{for $N\ge 5$}
\end{cases} \quad \text{and} \quad \kappa_0=\kappa_0(N)=\begin{cases}\frac{1}{2}& \text{for $N=2$}\\
\frac{65}{144}& \text{for $N=3$}\\ \frac{1}{4}&\text{for $N=4$}\\ \left(\frac{N-4}4\right)^2&\text{for $N\ge 5$\,.}
\end{cases}
\] 
Furthermore, $\mu_{0}, \frac{9}{16} $ and $\frac{1}{4}$ (in front of the complete gradient) are optimal for all $N\geq 2$ while $\kappa_0$ is optimal for $N\geq 5$.
\end{Corollary}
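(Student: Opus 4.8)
The plan is to obtain both displayed inequalities as specializations of the two results just established, namely Corollary~\ref{COR522} and Theorem~\ref{punctured ball}, taken in the weight-free case $\alpha=0$ and with no orthogonality constraint, i.e.\ $\Lambda=\Lambda_\Sigma$ (equivalently $\Sigma=S^{N-1}$). Under this choice one has $I_\Lambda=\mathbb{N}$ and $j_\Lambda=\min I_\Lambda=0$. The first inequality is then verbatim the conclusion of Corollary~\ref{COR522} for $\Lambda=\Lambda_\Sigma$, while the second is \eqref{Nav_Dir_ineq_tris} of Theorem~\ref{punctured ball} at $\alpha=0$, $\Lambda=\Lambda_\Sigma$, after discarding the final remainder term; I will check that this last term has a nonnegative coefficient, so the inequality is preserved. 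Thus no new analytic work is required: what remains is to evaluate the constants explicitly and to track the optimality statements.

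First I would record the values in \eqref{ga} at $\alpha=0$, namely
\[
\gamma_0=\frac{N(N-4)}4,\qquad \widehat\gamma_0^2=\frac{(N-4)^2}4,\qquad \overline\gamma_0=\frac{(N-2)^2+4}4 .
\]
Inserting these into the expression for $\mu_0(S^{N-1};\Lambda_\Sigma)$ furnished by Corollary~\ref{COR522} (equivalently \eqref{HR-RN}), the number $\mu_0(N)$ is the minimum over $j\ge 0$ of $(\gamma_0+\lambda_j)^2/(\widehat\gamma_0^2+\lambda_j)$, with $\lambda_j=j(j+N-2)$ and the value $N^2/4$ for $j=0$. A direct inspection of the first few indices shows that the minimum is realized at $j=1$ when $N=2,3,4$ and at $j=0$ when $N\ge 5$; this dichotomy is exactly Remark~\ref{R:principal}, where the principal eigenvalue realizes the minimum for $\alpha=0$ precisely when $N\ge 5$. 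Evaluating the ratio then gives $\mu_0(N)=0,\frac{25}{36},3,\frac{N^2}4$ in the respective ranges, in agreement with the constants $C_{N,0}$ recalled in the Introduction. Since $j_\Lambda=0$, the formula for $\kappa_0$ in Corollary~\ref{COR522} reduces to $\kappa_0=\bigl(N(N-4)+8-2\mu_0\bigr)/8$, and substituting the four values of $\mu_0$ yields $\kappa_0(N)=\frac12,\frac{65}{144},\frac14,\bigl(\frac{N-4}4\bigr)^2$.

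For the second inequality the only point to verify is that the coefficient $K_0(S^{N-1};\Lambda_\Sigma)=(2\overline\gamma_0-\widehat\gamma_0^2-\mu_0)/4$ of the dropped term is nonnegative. A one-line simplification of the quantities above gives $2\overline\gamma_0-\widehat\gamma_0^2=N^2/4$, whence $K_0=(N^2/4-\mu_0)/4\ge 0$ since $\mu_0\le N^2/4$ for every $N$. Discarding this nonnegative term from \eqref{Nav_Dir_ineq_tris} leaves precisely the second displayed inequality.

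Finally, the optimality claims are inherited from the two cited statements. The optimality of $\mu_0$ for all $N\ge 2$ and of the constant $9/16$ come from Corollary~\ref{COR522}, while the optimality of the coefficient $1/4$ in front of the full gradient term is the bound $A_1\le 1/4$ in Theorem~\ref{punctured ball}. For $\kappa_0$, Corollary~\ref{COR522} asserts optimality only when $j_\Lambda$ attains the minimum defining $\mu_0$; since $j_\Lambda=0$, this occurs exactly for $N\ge 5$ by Remark~\ref{R:principal}, which is the advertised range. I do not anticipate a genuine obstacle here: the argument is a specialization together with the elementary check $K_0\ge 0$, the only mild subtlety being the dimension-dependent location of the minimizing index, already resolved by Remark~\ref{R:principal}.
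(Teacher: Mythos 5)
Your proposal is correct and matches the paper exactly: the paper offers no separate proof of this corollary, presenting it precisely as the specialization of Corollary \ref{COR522} and Theorem \ref{punctured ball} to $\alpha=0$, $\Lambda=\Lambda_\Sigma$, and your evaluation of the constants, the check that $K_0(S^{N-1};\Lambda_\Sigma)=(N^2/4-\mu_0)/4\ge 0$ justifying the dropped remainder, and the sourcing of the optimality claims are all accurate.
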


\subsection{The Dirichlet case in proper cone-like domains}
For every $\Lambda$ nonempty subset of $\Lambda_\Sigma$, we denote by $C^2_{c}(\Omega;\Lambda)$ the set of functions in $C^2_{c}(\Omega)$ satisfying the orthogonality condition \eqref{ortogonale} for all $0<r<1$, respectively, for all $r>1$, in the two cases.

\begin{Theorem}\label{last}
Let $\Sigma$ be a proper subdomain of $S^{N-1}$ of class $C^2$,  let $\Omega$ be as in \eqref{Omega}, and let $\alpha\in\R$. Furthermore, let $\Lambda$ be a nonempty subset of $\Lambda_\Sigma$. Then for every $u\in C^2_c(\Omega;\Lambda)$ one has
\begin{equation}\label{eqlast}
\begin{split}
&\int_{\Omega}  |x|^\alpha|\Delta u|^2\,dx- \mu_\alpha^0(\Sigma;\Lambda) \int_{\Omega}|x|^{\alpha-2}|\nabla u|^2\,dx \\ &\ge \kappa_\alpha^0(\Sigma;\Lambda)\int_{\Omega} |x|^{\alpha-4} \left| \widehat \gamma_\alpha u + x\cdot \nabla u \right|^{2} \,dx+\frac{9}{16} \int_{\Omega} |x|^{\alpha-4} \left|\log|x|\right|^{-4}|u|^2\,dx
\end{split}
\end{equation}
where $\mu_\alpha^0(\Sigma;\Lambda)$ is given by \eqref{crucial-bis}, $\kappa_\alpha^0(\Sigma;\Lambda):=(2\overline\gamma_\alpha +2 \lambda_{\Lambda}-\mu^0_\alpha(\Sigma;\Lambda))/4$, $\lambda_{\Lambda}:=\min\Lambda$, and $\widehat \gamma_\alpha,\overline\gamma_\alpha$ are defined in \eqref{ga}. 
Moreover, if $A_0,A_1\in\R$ are such that
\[\begin{split}
\int_{\Omega}   |x|^\alpha  |\Delta u|^2\,dx&\ge  A_0\int_{\Omega}|x|^{\alpha-2}|\nabla u|^2\,dx +A_1\int_{\Omega}|x|^{\alpha-4}\left|\log|x|\right|^{-4}|u|^2\,dx
\end{split}
\]
for every $u\in C^2_{c}(\Omega)$, then $A_0\le \mu_\alpha^0(\Sigma;\Lambda)$ and $A_1\le \frac9{16}$.
\end{Theorem}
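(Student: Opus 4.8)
The plan is to transplant the whole problem to the half--cylinder by the Emden--Fowler transform \eqref{Emden-Fowler}, exactly as in the proofs of Theorems \ref{ND-case}, \ref{52} and \ref{punctured ball}, the only structural novelty being that the relevant comparison constant is now the \emph{Dirichlet} constant $\mu^0_\alpha(\Sigma;\Lambda)$ rather than $\mu_\alpha(\Sigma;\Lambda)$. Fix $u\in C^2_c(\Omega;\Lambda)$ and let $w=T(u)$; since $u$ has compact support in the \emph{open} set $\Omega$, the function $w$ has compact support in the open half--cylinder $Z=\R_+\times\Sigma$ (resp. $\R_-\times\Sigma$) and still satisfies \eqref{ortogonaleZ}, so in particular $w(t,\cdot),\,w_t(t,\cdot)\in C^2_c(\Sigma;\Lambda)$ for every $t$, while $w(\cdot,\omega)\in C^2_c(\R_+)$ for every $\omega$. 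Besides \eqref{first-derivative-w}, \eqref{second-derivative-w} and \eqref{log-u2} with $\beta=4$, I would record one new elementary identity: differentiating \eqref{Emden-Fowler} radially gives $x\cdot\nabla u=-|x|^{-\widehat\gamma_\alpha}(w_t+\widehat\gamma_\alpha w)$, whence $\widehat\gamma_\alpha u+x\cdot\nabla u=-|x|^{-\widehat\gamma_\alpha}w_t$; since $\alpha-4-2\widehat\gamma_\alpha=-N$, the change of variables then yields $\int_\Omega|x|^{\alpha-4}|\widehat\gamma_\alpha u+x\cdot\nabla u|^2\,dx=\int_Z|w_t|^2\,dt\,\dsigma$. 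With $F(w)$ and $G(w)$ as in the proof of Theorem \ref{HR-cone}, proving \eqref{eqlast} thus reduces to bounding $F(w)-\mu^0_\alpha(\Sigma;\Lambda)G(w)$ below on $Z$.

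Next I would estimate this difference term by term. Because $w(t,\cdot)\in C^2_c(\Sigma;\Lambda)$ for each $t$, the very definition \eqref{mualpha0SigmaZ} of $\mu^0_\alpha(\Sigma;\Lambda)$ gives $\int_\Sigma|L_\alpha w(t,\cdot)|^2\,\dsigma\ge\mu^0_\alpha(\Sigma;\Lambda)\int_\Sigma Q_\alpha(w(t,\cdot))\,\dsigma$, and integrating in $t$ disposes of the $|L_\alpha w|^2$ and $Q_\alpha(w)$ contributions. For the surviving integrand $|w_{tt}|^2+2|\nabla_\sigma w_t|^2+(2\overline\gamma_\alpha-\mu^0_\alpha(\Sigma;\Lambda))|w_t|^2$ I would use the one--dimensional Rellich inequality in \eqref{R1d} fibrewise in $t$, namely $\int_Z|w_{tt}|^2\ge\tfrac9{16}\int_Z t^{-4}|w|^2$, together with the angular Poincar\'e inequality $\int_Z|\nabla_\sigma w_t|^2\ge\lambda_\Lambda\int_Z|w_t|^2$, which is legitimate precisely because $\Sigma$ is proper, so $\lambda_\Lambda=\min\Lambda>0$, and because $w_t(t,\cdot)$ is orthogonal to every $H_\lambda$ with $\lambda\in\Lambda_\Sigma\setminus\Lambda$, hence lies in the span of the eigenspaces indexed by $\Lambda$. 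Collecting the $|w_t|^2$ terms produces the coefficient $2\overline\gamma_\alpha+2\lambda_\Lambda-\mu^0_\alpha(\Sigma;\Lambda)=4\kappa^0_\alpha(\Sigma;\Lambda)$; recalling the definition of $\kappa^0_\alpha(\Sigma;\Lambda)$ and converting the two remainder integrals back to $\Omega$ via \eqref{log-u2} and the identity above yields \eqref{eqlast} (indeed with the larger constant, once one has checked $\kappa^0_\alpha(\Sigma;\Lambda)\ge0$).

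For the optimality claims I would argue by scaling on the cylinder. Writing the assumed inequality as $F(w)\ge A_0\,G(w)+A_1\int_Z t^{-4}|w|^2$ for all $w$ in $C^2_c(Z;\Lambda):=T(C^2_c(\Omega;\Lambda))$, I test it against the admissible rescalings $w^\varepsilon(t,\omega)=w(\varepsilon t,\omega)$, using the homogeneities $\int|L_\alpha w^\varepsilon|^2=\varepsilon^{-1}\int|L_\alpha w|^2$, $\int Q_\alpha(w^\varepsilon)=\varepsilon^{-1}\int Q_\alpha(w)$, $\int|w^\varepsilon_t|^2=\varepsilon\int|w_t|^2$, $\int|\nabla_\sigma w^\varepsilon_t|^2=\varepsilon\int|\nabla_\sigma w_t|^2$, $\int|w^\varepsilon_{tt}|^2=\varepsilon^3\int|w_{tt}|^2$ and $\int t^{-4}|w^\varepsilon|^2=\varepsilon^3\int t^{-4}|w|^2$. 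Multiplying by $\varepsilon$ and letting $\varepsilon\to0$ isolates $\int_Z|L_\alpha w|^2\ge A_0\int_Z Q_\alpha(w)$, whence $A_0\le\mu^0_\alpha(\Sigma;\Lambda)$ by \eqref{crucial-bis}; multiplying by $\varepsilon^{-3}$ and letting $\varepsilon\to\infty$ isolates $\int_Z|w_{tt}|^2\ge A_1\int_Z t^{-4}|w|^2$, and choosing $w(t,\omega)=\psi(t)\varphi(\omega)$ with $\varphi\in C^2_c(\Sigma;\Lambda)$ and $\psi\in C^2_c(\R_+)$ forces $A_1\le\tfrac9{16}$ through the sharp constant in \eqref{R1d}.

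The point I expect to be most delicate is the passage to the Dirichlet cylinder inequality. In the Navier proofs one reduces the one--dimensional comparison to $H^2\cap H^1_0$ via odd reflection and Theorem \ref{3inf}; here that route is closed, and the fibrewise use of $\mu^0_\alpha(\Sigma;\Lambda)$ is forced, together with the reliance on \eqref{crucial-bis} and on the strict positivity of $\lambda_\Lambda$ for a proper $\Sigma$ in the Poincar\'e step (this is exactly why $\Sigma=S^{N-1}$ is excluded here and treated separately, with a spherical--gradient term, in Theorem \ref{punctured ball}). The accompanying subtlety is the sign bookkeeping of the first--order remainder: since $\mu^0_\alpha(\Sigma;\Lambda)\ge\mu_\alpha(\Sigma;\Lambda)$, the clean algebraic nonnegativity $p_\alpha(\lambda)\ge0$ available in the Navier case no longer applies verbatim, so verifying $2\overline\gamma_\alpha+2\lambda_\Lambda-\mu^0_\alpha(\Sigma;\Lambda)\ge0$ (equivalently $\kappa^0_\alpha(\Sigma;\Lambda)\ge0$), which guarantees that the $|\widehat\gamma_\alpha u+x\cdot\nabla u|^2$ term is a genuine improvement, is the one quantitative step that must be carried out with care.
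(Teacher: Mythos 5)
Your proposal follows essentially the same route as the paper's (very condensed) proof: Emden--Fowler transplantation to the half-cylinder, the decomposition of $F(w)-\mu^0_\alpha(\Sigma;\Lambda)G(w)$ as in \eqref{reduced-difference}, the fibrewise use of the Dirichlet constant on $\Sigma$ (which is exactly how \eqref{crucial-bis} enters), the angular Poincar\'e inequality \eqref{P-w}, and the one-dimensional Rellich inequality \eqref{R1d}; the scaling argument for the optimality of $A_0$ and $A_1$ likewise mirrors the proofs of Theorems \ref{ND-case} and \ref{52}. The identity $\int_\Omega|x|^{\alpha-4}|\widehat\gamma_\alpha u+x\cdot\nabla u|^2\,dx=\int_Z|w_t|^2\,dt\,\dsigma$, which you compute explicitly, is indeed the ingredient the paper leaves implicit, and your computation of it is correct.

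One caveat on the last point you flag. Your derivation produces the first-order remainder with coefficient $2\overline\gamma_\alpha+2\lambda_\Lambda-\mu^0_\alpha(\Sigma;\Lambda)=4\kappa^0_\alpha(\Sigma;\Lambda)$, and this dominates the stated coefficient $\kappa^0_\alpha(\Sigma;\Lambda)$ only when $\kappa^0_\alpha(\Sigma;\Lambda)\ge0$ (if $\kappa^0_\alpha<0$ then $4\kappa^0_\alpha<\kappa^0_\alpha$ and the implication reverses). You present the verification of $\kappa^0_\alpha(\Sigma;\Lambda)\ge0$ as a step ``that must be carried out with care,'' but the paper's own remark following the theorem states that the sign of $\kappa^0_\alpha(\Sigma;\Lambda)$ is not known and may be negative, precisely because only $\mu^0_\alpha(\Sigma;\Lambda)\ge\mu_\alpha(\Sigma;\Lambda)$ is available and the nonnegativity \eqref{kappapositive} rests on the identity $\mu_\alpha(\Sigma;\Lambda)=(\lambda+\gamma_\alpha)^2/(\lambda+\widehat\gamma_\alpha^2)$, which has no known analogue for $\mu^0_\alpha$. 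So this check cannot be completed in general; you should either state the inequality with the coefficient $4\kappa^0_\alpha(\Sigma;\Lambda)$ produced by the argument, or make explicit that the passage from $4\kappa^0_\alpha$ to $\kappa^0_\alpha$ is conditional on $\kappa^0_\alpha\ge0$. This is a loose end in the source as well, not a defect specific to your argument.
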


The proof of Theorem \ref{last} is very similar to that of Theorem \ref{52}. In particular, one writes the left hand side of \eqref{eqlast} like \eqref{reduced-difference} and uses \eqref{crucial-bis}, \eqref{P-w}, and the 1-dimensional Rellich inequality \eqref{R1d}.

\begin{Remark}
%\label{Dir>cone-like}
We point out that we only know that $\mu_\alpha^0(\Sigma;\Lambda)\ge\mu_\alpha^0(C_\Sigma;\Lambda)\ge\mu_\alpha(C_\Sigma;\Lambda)$ (see Section \ref{S3}). Hence, we cannot exploit the estimate \eqref{kalpha} to deduce the sign of the coefficient $\kappa_\alpha^0(\Sigma;\Lambda)$, that might be also negative. 
\end{Remark}

%\[
%\mu_\alpha^0(Z;\Lambda):=\inf_{\scriptstyle w\in C^2_c(Z;\Lambda)\atop \scriptstyle w\ne 0}\frac{\displaystyle\int_{Z}|L_\alpha w|^2\,dt\,{\dsigma}}{\displaystyle\int_{Z}Q_\alpha(w)\,dt\,{\dsigma}} \geq \mu_\alpha^0(C_\Sigma;\Lambda)
%\]
%see Remark \ref{Dir-rem}. 
%\rosso oppure nell'enunciato mettiamo $\mu_\alpha^0(\Sigma;\Lambda)$??  \nero
 
\subsection{The Navier case}
For $\Omega=\{x\in C_\Sigma~:~|x|<1\}$, let us introduce the space
\[
C^2_{N}(\Omega):=\{u\in C^2(\overline\Omega)~:~u|_{\partial\Omega}=0,~u=0\text{ in a neighborhood of $0$}\}.
\]
In the case $\Omega=\{x\in C_\Sigma~:~|x|>1\}$, we define
\[
C^2_{N}(\Omega):=\{u\in C_c^2(\overline\Omega)~:~u|_{\partial\Omega}=0\}.
\]
We point out that, in both cases, the space $C^2_N(\Omega)$ strictly contains $C^2_{ND}(\Omega)$. In particular, if $u\in C^2_{ND}(\Omega)$ then $u=0$ and $\nabla u=0$ on $S^{N-1}\cap\partial\Omega$, whereas if if $u\in C^2_{N}(\Omega)$ then $u=0$ on $S^{N-1}\cap\partial\Omega$ but $\nabla u$ does not necessarily vanish on $S^{N-1}\cap\partial\Omega$. 
The boundary conditions expressed in the above definitions of $C^2_{N}(\Omega)$ correspond to the so-called Navier case for domains $\Omega$ of the form \eqref{Omega}. Finally, as in the previous sections, for every $\Lambda$ nonempty subset of $\Lambda_\Sigma$, we denote by $C^2_{N}(\Omega;\Lambda)$ the set of functions in $C^2_{N}(\Omega)$ satisfying the orthogonality condition \eqref{ortogonale} for all $0<r<1$, respectively, for all $r>1$, in the two cases. We show the following improved Hardy-Rellich inequality:

\begin{Theorem}\label{N-case}
Let $\alpha\in\R$ and let $\Omega$ be as in \eqref{Omega}. Furthermore, let $\Lambda$ be a nonempty subset of $\Lambda_\Sigma$. Then for every $u\in C^2_{N}(\Omega;\Lambda)$ one has
\begin{equation*}
%\label{Navi}
\begin{split}
\int_{\Omega}   |x|^\alpha  |\Delta u|^2\,dx&\ge  \mu_\alpha(\Sigma;\Lambda)\int_{\Omega}|x|^{\alpha-2}|\nabla u|^2\,dx\\&+\kappa_\alpha(\Sigma;\Lambda)\int_{\Omega}|x|^{\alpha-4}\left|\log|x|\right|^{-2}|u|^2\,dx
\end{split}
\end{equation*}
where $\mu_\alpha(\Sigma;\Lambda)$ is given by \eqref{mu-alpha-Sigma} and $\kappa_\alpha(\Sigma;\Lambda)$ as in \eqref{kalpha}. 
Moreover, if $A_0,A_1\in\R$ are such that
\begin{equation*}
%\label{Navi-A}
\begin{split}
\int_{\Omega}   |x|^\alpha  |\Delta u|^2\,dx&\ge  A_0\int_{\Omega}|x|^{\alpha-2}|\nabla u|^2\,dx +A_1\int_{\Omega}|x|^{\alpha-4}\left|\log|x|\right|^{-2}|u|^2\,dx
\end{split}
\end{equation*}
for every $u\in C^2_{N}(\Omega;\Lambda)$, then $A_0\le\mu_\alpha(\Sigma;\Lambda)$ and $ A_1\le(2\lambda+2\overline\gamma_\alpha-\mu_\alpha(\Sigma;\Lambda))/4$, where $\lambda\in \Lambda$ is such that $\mu_\alpha(\Sigma;\Lambda)=\frac{(\lambda+\gamma_\alpha)^2}{\lambda+\widehat\gamma_\alpha^2}$. 
\end{Theorem}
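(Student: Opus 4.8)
The plan is to mirror the proofs of Theorems \ref{ND-case} and \ref{52}, reducing everything to the half-cylinder $Z$ via the Emden-Fowler transform $w=T(u)$ and then invoking the half-cylinder inequality \eqref{halfcilinder}, the Poincar\'e inequality \eqref{P-w}, and the one-dimensional Hardy inequality \eqref{R1d}. The one structural novelty of the Navier case is that, since $\nabla u$ is not required to vanish on $S^{N-1}\cap\partial\Omega$, the transformed function satisfies only $w(0,\cdot)=0$ on $\{0\}\times\Sigma$, while $w_t(0,\cdot)$ need not vanish. This is precisely why no $\left|\log|x|\right|^{-4}$ remainder can appear: for $u\in C^2_N(\Omega;\Lambda)$ the quantity $\int_\Omega|x|^{\alpha-4}\left|\log|x|\right|^{-4}|u|^2\,dx=\int_Z t^{-4}|w|^2\,dt\,\dsigma$ (by \eqref{log-u2} with $\beta=4$) generically diverges, since $w\sim w_t(0,\cdot)\,t$ as $t\to 0^+$ gives $t^{-4}|w|^2\sim t^{-2}$.

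For the inequality itself I would first treat $\Omega=\{x\in C_\Sigma:|x|<1\}$, so that $Z=\R_+\times\Sigma$, and write the left-hand side minus $\mu_\alpha(\Sigma;\Lambda)\int_\Omega|x|^{\alpha-2}|\nabla u|^2\,dx$ in the reduced cylindrical form \eqref{reduced-difference}. The point that $w(0,\cdot)=0$ guarantees that the odd extension of $w$ in $t$ still belongs to $H^2\cap H^1_0(Z_\Sigma)$ (its first derivative is continuous across $t=0$ and the jump of the second derivative produces no singular part), so \eqref{halfcilinder} applies and the $|L_\alpha w|^2-\mu_\alpha(\Sigma;\Lambda)Q_\alpha(w)$ contribution is nonnegative. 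Applying \eqref{P-w} to $2\int_Z|\nabla_\sigma w_t|^2$ and discarding $\int_Z|w_{tt}|^2\ge 0$ — this is exactly the step where, in the Navier-Dirichlet case, one would instead invoke the one-dimensional Rellich inequality to generate the $9/16$ term — one is left with $(2\lambda_\Lambda+2\overline\gamma_\alpha-\mu_\alpha(\Sigma;\Lambda))\int_Z|w_t|^2=4\kappa_\alpha(\Sigma;\Lambda)\int_Z|w_t|^2$, where $\kappa_\alpha(\Sigma;\Lambda)\ge 0$ by \eqref{kappapositive}. Since $w(0,\cdot)=0$, the slice $w(\cdot,\omega)$ lies in $H^1_0(\R_+)$, so the one-dimensional Hardy inequality yields $\int_Z|w_t|^2\ge\frac14\int_Z t^{-2}|w|^2$; recalling \eqref{log-u2} with $\beta=2$ this produces the claimed remainder $\kappa_\alpha(\Sigma;\Lambda)\int_\Omega|x|^{\alpha-4}\left|\log|x|\right|^{-2}|u|^2\,dx$. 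The case $\Omega=\{|x|>1\}$ is identical with $Z=\R_-\times\Sigma$.

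For the optimality I would transcribe the assumed inequality to the cylinder and, as in Theorem \ref{ND-case}, substitute $w^\eps(t,\omega)=w(\eps t,\omega)$. Multiplying the resulting inequality by $\eps$ and letting $\eps\to 0$ kills every term except $\int_Z|L_\alpha w|^2-A_0\int_Z Q_\alpha(w)$, and Theorem \ref{3inf} (equivalently, testing with $w=\psi\varphi$ for an eigenfunction $\varphi$) forces $A_0\le\mu_\alpha(\Sigma;\Lambda)$. To bound $A_1$ I would then take $w(t,\omega)=\psi(t)\varphi(\omega)$ with $\varphi$ an eigenfunction for the eigenvalue $\lambda\in\Lambda$ realizing $\mu_\alpha(\Sigma;\Lambda)=(\lambda+\gamma_\alpha)^2/(\lambda+\widehat\gamma_\alpha^2)$, and fix $A_0=\mu_\alpha(\Sigma;\Lambda)$; this makes the coefficient of $\int_0^\infty|\psi|^2\,dt$ vanish and reduces the inequality to $A_1\int_0^\infty t^{-2}|\psi|^2\,dt\le\int_0^\infty|\psi''|^2\,dt+(2\lambda+2\overline\gamma_\alpha-\mu_\alpha(\Sigma;\Lambda))\int_0^\infty|\psi'|^2\,dt$ for every $\psi\in C^2_c(\R_+)$.

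The hard part will be the sharpness of this last one-dimensional step. The bare Hardy inequality only gives that the quotient is bounded below by $\tfrac14(2\lambda+2\overline\gamma_\alpha-\mu_\alpha(\Sigma;\Lambda))$, which is not enough to force $A_1$ below this value; one must exhibit a family of near-extremals for Hardy concentrated at $t\to+\infty$ — for instance $\psi_R(t)\approx t^{1/2}$ cut off on $[1,R]$ — along which $\int t^{-2}|\psi|^2\sim\log R$ and $\int|\psi'|^2\sim\frac14\log R$ both diverge while $\int|\psi''|^2=O(1)$ stays bounded, so that the Rellich-type term is negligible and the quotient tends to exactly $\tfrac14(2\lambda+2\overline\gamma_\alpha-\mu_\alpha(\Sigma;\Lambda))$, yielding $A_1\le\tfrac14(2\lambda+2\overline\gamma_\alpha-\mu_\alpha(\Sigma;\Lambda))$. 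A second delicate point, already present in Theorem \ref{ND-case}, is that this bound is extracted only after fixing $A_0$ at its optimal value $\mu_\alpha(\Sigma;\Lambda)$: otherwise the $\eps\to 0$ limit of the scaled inequality carries a divergent $\eps^{-2}$ term with the sign that destroys the estimate. Thus the two coefficients are shown optimal \emph{jointly}, in the order $A_0$ then $A_1$, exactly as in the Navier-Dirichlet argument.
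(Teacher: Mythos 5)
Your proposal is correct and follows essentially the same route as the paper, whose proof of Theorem \ref{N-case} is precisely the argument of Theorem \ref{52} with the single modification of discarding $\int_Z|w_{tt}|^2\,dt\,d\sigma\ge 0$ in place of the one-dimensional Rellich inequality, exactly as you do; your optimality argument likewise reproduces the template of Theorem \ref{ND-case} (rescaling $w^\eps$, extracting $A_0\le\mu_\alpha(\Sigma;\Lambda)$ first, then testing with $w=\psi\varphi$ at $A_0=\mu_\alpha(\Sigma;\Lambda)$ and using the sharpness of the constant $\tfrac14$ in \eqref{R1d}). The extra details you supply — the odd extension remaining in $H^2\cap H^1_0(Z_\Sigma)$ because only $w(0,\cdot)=0$ is needed, and the explicit near-minimizers $\psi_R$ for the one-dimensional Hardy quotient — are consistent with, and merely make explicit, what the paper leaves implicit.
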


The proof is the same as Theorem \ref{52} with the only difference that here, in view of the space function chosen, we cannot use \eqref{R1d} anymore and then we just estimate the term involving $w_{tt}$ as follows $\int_Z|w_{tt}|^2\,dt\,{\dsigma}\ge 0$. 

\begin{Remark}
We note that both the optimal constants found in Theorem \ref{N-case} coincide with those found in the Navier-Dirichlet case in Theorem \ref{52}. However, in the Navier case it is not possible to obtain a remainder term involving  $|\nabla u|^2$ with a weight of the form $|x|^{\alpha-2}\left|\log\left|x\right|\right|^{-2}$ as in the Dirichlet case, because the integrability is lost on $\partial\Omega\cap\{|x|=1\}$ when $|\nabla u|\ne 0$. The situation would differ if alternative weights were considered, as done in \cite[Theorem 3.6]{M}.
\end{Remark}

\par\bigskip\noindent
\textbf{Acknowledgments.}
The authors are members of the Gruppo Nazionale per l'Analisi Matematica, la Probabilit\`a e le loro Applicazioni (GNAMPA, Italy) of the Istituto Nazionale di Alta Matematica (INdAM, Italy).\par The research of the first author was carried out within the PRIN 2022 project 2022SLTHCE - Geometric-Analytic Methods for PDEs and Applications GAMPA (CUP E53D23005880006), funded by European Union - Next Generation EU within the PRIN 2022 program (D.D. 104 - 02/02/2022 Ministero dell'Universit\`a e della Ricerca). The research of the second author is partially supported by the PRIN 2022 project 20227HX33Z - Pattern formation in nonlinear phenomena (CUP D53D23005680006), funded by European Union - Next Generation EU within the PRIN 2022 program (D.D. 104 - 02/02/2022 Ministero dell'Universit\`a e della Ricerca, Italy). This manuscript reflects only the authors' views and opinions and the Ministry cannot be considered responsible for them.

\end{document}